\newlength{\defbaselineskip}
\newcommand{\setlinespacing}[1]%
           {\setlength{\baselineskip}{#1 \defbaselineskip}}
\numberwithin{equation}{section}
\newtheorem{thm}{Theorem}[section]
\newtheorem{cor}[thm]{Corollary}
\newtheorem{lem}[thm]{Lemma}
\newtheorem{prop}[thm]{Proposition}
\theoremstyle{definition}
\newtheorem{defn}[thm]{Definition}
\theoremstyle{remark}
\newtheorem{rem}[thm]{Remark}
\numberwithin{equation}{section}
\newcommand{\R}{\mathbb R}
\newcommand{\bq}{\begin{equation}}
	\newcommand{\eq}{\end{equation}}
\newcommand{\lt}{\left}
\newcommand{\rt}{\right}
\newcommand{\ti}{\tilde}
\def\moverlay{\mathpalette\mov@rlay}
\def\mov@rlay#1#2{\leavevmode\vtop{%
		\baselineskip\z@skip \lineskiplimit-\maxdimen
		\ialign{\hfil$\m@th#1##$\hfil\cr#2\crcr}}}
\newcommand{\charfusion}[3][\mathord]{
	#1{\ifx#1\mathop\vphantom{#2}\fi
		\mathpalette\mov@rlay{#2\cr#3}
	}
	\ifx#1\mathop\expandafter\displaylimits\fi}
\def\XXint#1#2#3{{\setbox0=\hbox{$#1{#2#3}{\int}$ }
		\vcenter{\hbox{$#2#3$ }}\kern-.6\wd0}}
\begin{document}

\title[The energy-critical INLS with strong singularity]
{The 3D energy-critical inhomogeneous nonlinear Schr\"{o}dinger equation with strong singularity}

\author{Yoonjung Lee}

\thanks{This research is supported by
	NRF-2022R1A2C1002820, NRF-2022R1I1A1A01068481 and RS-2024-00406821.}

\subjclass[2010]{Primary: 35A01, 35Q55; Secondary: 35B45}
\keywords{inhomogeneous Strichartz estimates,
nonlinear Schr\"odinger equation, weighted spaces}

\address{Department of Mathematics, Yonsei University, 50 Yonsei-Ro, Seodaemun-Gu, Seoul 03722, Republic of Korea}
\email{yjglee@yonsei.ac.kr}

\begin{abstract}
In this paper, we study the Cauchy problem for the 3D energy-critical inhomogeneous nonlinear Schr\"odinger equation(INLS) $$i\partial_{t}u+\Delta u=\pm|x|^{-\alpha}|u|^{4-2\alpha}u$$ with strong singularity $3/2\leq \alpha<2$.
The well-posedness problem is well-understood for $0<\alpha<3/2$, but the case $3/2\leq \alpha<2$ has remained open so far.
We address the local/small data global well-posedness result for $3/2\leq \alpha<11/6$ by improving the inhomogeneous Strichartz estimates on the weighted space. 
\end{abstract}

\maketitle

\section{Introduction}
We consider the Cauchy problem of the energy-critical inhomogeneous nonlinear Schr\"odinger equation (INLS) in three dimensions
\begin{equation}\label{3DINLS}
\left\{
\begin{aligned}
&i \partial_{t} u + \Delta u = \lambda |x|^{-\alpha} |u|^{4-2\alpha} u
%\quad (x,t) \in \mathbb{R}^3 \times \mathbb{R}, 
\\
&u(x, 0)=u_0 \in H^1(\R^3)
\end{aligned}
\right.
\end{equation}
with strong singularity $3/2\leq\alpha<2$. 
Here, $u(t, x)$ is a complex-valued function in $ I\times  \mathbb{R}^3 $ for a time interval $I \in \mathbb{R}$.
The value of $\lambda$ is determined as $1$ or $-1$, corresponding to the \textit{defocusing} and \textit{focusing} model, respectively.
This type of the equation physically arises in nonlinear optics with spatially dependent interaction to describe the propagation of laser beam in the inhomogeneous medium (see e.g.\cite{B, BPVT}).  
The equation \eqref{3DINLS} has a scaling symmetry $u(t, x)\mapsto %u_{\delta}(t, x)= 
\delta^{\frac{1}{2}} u(\delta^2 t, \delta x)$
for $\delta>0$ and the energy norm $\dot{H}^1(\R^3)$ of the solution remains invariant under the scaling.
This means a \textit{energy-critical} case. 

The well-posedness problem of \eqref{3DINLS} was studied in depth with $\alpha=0$ over the past several decades, see e.g. \cite{B2, CW, CKSTT, KM, T, TV} based on the classical Strichartz estimates.
In recent years, many researchers have attempted to extend it to the case $\alpha>0$.
Relying on the classical Strichartz estimates, the energy-critical case of \eqref{3DINLS} was first dealt in \cite{CHL} with $0<\alpha<4/3$.
For more extended range $0<\alpha<3/2$, weighted spaces and Lorentz space approaches were introduced in \cite{AK, AT, LS}, also see \cite{CL}.
Lately, in \cite{CCF}, such results were recovered on the classical $L^p$ setting by using the generalized Leibniz rule.  
However, there is no existence result of \eqref{3DINLS} for $3/2\leq \alpha<2$ up to now.
We would like to refer to \cite{CHL, GM, GX, LYZ, LZ, P} where the scattering results for \eqref{3DINLS} were achieved under $0<\alpha<3/2$.

In this paper, we are interested in the unsolved case $3/2\leq \alpha<2$. 
A main technical issue in this case is that as the nonlinear term become more singular, a more regular solution space is required.  
To treat the issue, we consider the weighted space approach because it is not only to handle efficiently the singularity on the nonlinear term, but also to expect the gain of regularity in virtue of the smoothing effect. 
We will extend the validity of exponents where inhomogeneous Strichartz esimates hold in the weighted $L^p$ space.

\subsection{Extended Strichartz estimates on weighted spaces}
We first recall the weighted version of the homogeneous Strichartz estimate, focusing on the three dimensions.
It was first discovered in \cite{OR} for diagonal case, and it was generalized in \cite{KLS, KLS2} shown as follows.

\begin{thm}(\cite{KLS2}) \label{thm_homo}
	Let $\gamma>0$ and $-1/2<\sigma<3/2$.
	Then estimate 
	\begin{equation}\label{1}
		\| e^{it\Delta}f\|_{L_t^q L_x^r(|x|^{-r\gamma})} \lesssim \|f\|_{\dot{H}^{\sigma}}.
	\end{equation}
	holds 
	if $(q, r)$ satisfies %is $(\gamma, \sigma)$-\textit{admissible}
	\begin{equation}\label{admiss}
		0\leq \frac{1}{q} \leq \frac{1}{2}, \quad \frac{\gamma}{3}< \frac{1}{r} \leq \frac{1}{2},
		\quad \frac{1}{q}<\frac{3}{2}(\frac{1}{2}-\frac{1}{r})+\gamma,\quad
		\sigma=3(\frac{1}{2}-\frac{1}{r})-\frac{2}{q}+\gamma.
	\end{equation}
\end{thm}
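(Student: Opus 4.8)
The identity $\sigma=3(\frac12-\frac1r)-\frac2q+\gamma$ in \eqref{admiss} is the homogeneity forced by $e^{it\Delta}$ under $u(t,x)\mapsto u(\delta^{2}t,\delta x)$, so it may be taken for granted, and the task is to prove \eqref{1} on the resulting parameter region. I would split the argument according to the sign of $s:=\sigma-\gamma=3(\frac12-\frac1r)-\frac2q$.

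When $s\ge0$ the estimate follows from the classical Strichartz inequality together with a Hardy-type bound. Since $0<\gamma<3/r$ on the admissible region, the fractional Hardy inequality gives $\bigl\||x|^{-\gamma}v\bigr\|_{L^{r}_{x}}\lesssim\bigl\||\nabla|^{\gamma}v\bigr\|_{L^{r}_{x}}$, and as $|\nabla|^{\gamma}$ commutes with the free propagator,
\[
\bigl\| e^{it\Delta}f\bigr\|_{L^{q}_{t}L^{r}_{x}(|x|^{-r\gamma})}
\lesssim\bigl\| e^{it\Delta}|\nabla|^{\gamma}f\bigr\|_{L^{q}_{t}L^{r}_{x}}
\lesssim\bigl\| |\nabla|^{\gamma}f\bigr\|_{\dot H^{s}}=\|f\|_{\dot H^{\sigma}},
\]
the middle step being the usual (unweighted) Strichartz estimate with derivative loss $s\ge0$, valid for all $q\ge2$ and $2\le r<\infty$. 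Within the region of \eqref{admiss} one checks that $s\ge0$ is equivalent to $\sigma\ge\gamma$ and that the remaining constraints (in particular $\sigma<3/2$) are then automatic, so this disposes of the sub-region $\sigma\ge\gamma$.

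The substantial case is $s<0$, i.e.\ $\sigma<\gamma$, where the weight must supply a genuine smoothing gain. Put $f=|\nabla|^{-\sigma}g$ with $\|g\|_{L^{2}}=\|f\|_{\dot H^{\sigma}}$ and let $K_{\tau}(x,y)$ be the kernel of $|\nabla|^{-2\sigma}e^{i\tau\Delta}$; by the $TT^{*}$ argument (no retarded truncation is needed for the homogeneous estimate) the claim becomes
\[
\Bigl\|\iint_{\R\times\R^{3}}|x|^{-\gamma}K_{t-s}(x,y)|y|^{-\gamma}F(s,y)\,dy\,ds\Bigr\|_{L^{q}_{t}L^{r}_{x}}\lesssim\|F\|_{L^{q'}_{t}L^{r'}_{x}}.
\]
From the dispersive bound and parabolic scaling, $K_{\tau}(x,y)=|\tau|^{\sigma-\frac32}G\bigl((x-y)/|\tau|^{1/2}\bigr)$ with $|G(w)|\lesssim\langle w\rangle^{-\min(2\sigma,\,3-2\sigma)}$ when $0<\sigma<3/2$, hence $|K_{\tau}(x,y)|\lesssim|\tau|^{-a}|x-y|^{-b}$ whenever $2a+b=3-2\sigma$ and $0\le b\le\min(2\sigma,3-2\sigma)$ (when $\sigma\le0$ the operator $|\nabla|^{-2\sigma}$ is a singular integral rather than a positive convolution, and one first splits into frequencies below and above $|\tau|^{-1/2}$). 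Inserting such a bound, the space convolution is estimated by the Stein--Weiss (weighted Hardy--Littlewood--Sobolev) inequality on $\R^{3}$, which forces $b=\frac6r-2\gamma$ and whose hypotheses reproduce $\frac\gamma3<\frac1r\le\frac12$, while the remaining integral in $t$ is estimated by the one-dimensional Hardy--Littlewood--Sobolev inequality, which forces $a=\frac2q$ and requires $2<q<\infty$; the consistency $2a+b=3-2\sigma$ is exactly the scaling relation in \eqref{admiss}.

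This crude estimate, however, only reaches the part of the region where $b=\frac6r-2\gamma\le\min(2\sigma,3-2\sigma)$; it misses the rest (notably all $\sigma\le0$, and the closure where $\frac1q$ approaches $\frac32(\frac12-\frac1r)+\gamma$), because there one must keep the Schr\"odinger oscillation in $K_{\tau}$ instead of discarding it. I would close the gap by interpolation: first establish the endpoint line $q=2$ directly, namely $\|e^{it\Delta}f\|_{L^{2}_{t}L^{r}_{x}(|x|^{-r\gamma})}\lesssim\|f\|_{\dot H^{\sigma}}$, by a Littlewood--Paley decomposition together with Plancherel in $t$ (a Kato-type smoothing argument, in which the oscillation enters crucially and which is exactly where the restrictions $-1/2<\sigma<3/2$ are used), and then interpolate this with the trivial identity $\|e^{it\Delta}f\|_{L^{\infty}_{t}L^{2}_{x}}=\|f\|_{L^{2}}$ via the Stein--Weiss interpolation theorem with change of measure, so as to let the weight $|x|^{-r\gamma}$ and the regularity $\sigma$ vary simultaneously. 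The main obstacle is precisely this oscillatory endpoint: proving the $q=2$ smoothing bound on the full range of $(r,\gamma,\sigma)$ — where the interplay of the three parameters is most delicate near $\sigma\to3/2$ (the kernel of $|\nabla|^{-2\sigma}$ being then barely locally integrable) and near $\sigma\to-1/2$ — or, if one prefers a direct route, carrying out an oscillatory-integral analysis of $K_{\tau}$ that goes beyond taking absolute values.
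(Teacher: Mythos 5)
Note first that the paper does not prove Theorem \ref{thm_homo}: it is imported directly from \cite{KLS2}, so there is no in-paper proof to compare against. I therefore assess your proposal on its own terms.

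Your sub-case $\sigma\ge\gamma$ is sound. The fractional Hardy inequality $\bigl\||x|^{-\gamma}v\bigr\|_{L^r}\lesssim\bigl\||\nabla|^\gamma v\bigr\|_{L^r}$ holds exactly for $0<\gamma<3/r$, which is the second condition of \eqref{admiss}; combined with Strichartz with non-negative derivative loss (valid for $q\ge 2$, $2\le r<\infty$ with $s=3(\tfrac12-\tfrac1r)-\tfrac2q\ge0$), this closes that regime, and one can check the third condition of \eqref{admiss}, which unwinds to $\sigma>-\gamma$, is automatic there. The substance of the theorem, however, is the smoothing regime $\sigma<\gamma$ (and in particular $\sigma\le0$), and here the proposal is an outline with an acknowledged hole at its centre. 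Your $TT^{*}$/Stein--Weiss computation is honest that after taking absolute values of the kernel one only covers a strict sub-region. The remedy you propose --- prove the $q=2$ local-smoothing endpoint $\|e^{it\Delta}f\|_{L^2_tL^r_x(|x|^{-r\gamma})}\lesssim\|f\|_{\dot H^{\sigma}}$ and then interpolate against the conserved $L^\infty_tL^2_x$ norm via Stein--Weiss interpolation with change of measure --- is indeed the route taken in \cite{OR,KLS,KLS2}, so the architecture is the standard one. But that $q=2$ estimate is not a finishing touch: it is where essentially all of the constraints of \eqref{admiss}, including the delicate $-1/2<\sigma<3/2$ and $1/q<\frac32(\frac12-\frac1r)+\gamma$, actually originate, and it is the hard part of the theorem. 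Your write-up names it as ``the main obstacle'' and stops there. Until that endpoint is actually established (via Littlewood--Paley and Plancherel in $t$ on dyadic frequency annuli, with a summation argument that survives the endpoints $\sigma\to3/2$ and $\sigma\to-1/2$), the proof has a genuine gap exactly where the theorem's content lives, so the proposal cannot be regarded as a proof of Theorem \ref{thm_homo}.
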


The condition \eqref{admiss} in Theorem \ref{thm_homo} is almost optimal except for $1/r\leq 1/2$, see \cite{KLS2}. 
We define a class $\mathcal{AD}_{\gamma, \sigma}$, for convenience, as a set of the exponents $(q, r)$ satisfying \eqref{admiss} for $\gamma$ and $\sigma$.  %We would like to mention that 
Note that $\mathcal{AD}_{\gamma, 0}$, sometimes using the notation $\mathcal{AD}_{\gamma}$, is the pair set of the reduced condition 
\[
\frac{1}{6}+\frac{\gamma}{3} \leq \frac{1}{r} \leq \frac{1}{2}, \quad \frac{2}{q}=3(\frac{1}{2}-\frac{1}{r})+\gamma.
\]

As a direct application of the Christ-Kiselev Lemma \cite{CK} and $TT^{\ast}$-argument, Theorem \ref{thm_homo} implies that
\begin{equation}\label{inhomo}
	\left\| \int_0^t e^{i(t-s) \Delta} F(s) ds \right\|_{L_t^{q} L_x^{r}(|x|^{-r\gamma})} \lesssim  \|F \|_{L_t^{\tilde{q}'} L_x^{\tilde{r}'}(|x|^{\tilde{r}'\tilde{\gamma}})}
\end{equation}
also holds in the non-sharp case $1/q+1/\tilde{q}<1$ if $(q, r) \in \mathcal{AD}_{\gamma, \sigma}$ and $(\tilde{q}, \tilde{r})\in \mathcal{AD}_{\tilde{\gamma}, -\sigma}$ with $-1/2<\sigma<1/2$.
But an interesting fact is that the non-weighted version of \eqref{inhomo} is valid for more wider range than the that of homogeneous ones, see \cite{F2, KS, V2}.
According to previous studies, 
we will improve the inhomogeneous Strichartz estimates on the weighted spaces.
We consider the class $\mathcal{AC}_{\gamma, \sigma}$, which is more wider class than $\mathcal{AD}_{\gamma, \sigma}$ for $\gamma, \sigma>0$. 
\begin{defn}
	Let $0< \gamma<1$ and $-1/2<\sigma<1/2$. We say $(q, r)\in \mathcal{AC}_{\gamma, \sigma}$ if
	\begin{equation}\label{acad}
		0\leq\frac{1}{q} \leq1,\qquad \frac{\gamma}{3}<\frac{1}{r}<\frac{1}{2}+\frac{\gamma}{3}, \qquad \frac{1}{q}<3(\frac{1}{2}-\frac{1}{r})+\gamma
	\end{equation}
	and 
	\begin{equation}\label{sc_1}
	\frac{2}{q}=3\lt(\frac{1}{2}-\frac{1}{r}\rt)+\gamma-\sigma.
	\end{equation}
\end{defn}
Then we explore an extension of the weighed inhomogeneous Strichartz estimate in the following theorem, whose proof is also available for higher dimensional cases.

\begin{figure}
		\label{pic}
	\centering
	\begin{tabular}{cc}
	\includegraphics[width=0.4\textwidth]{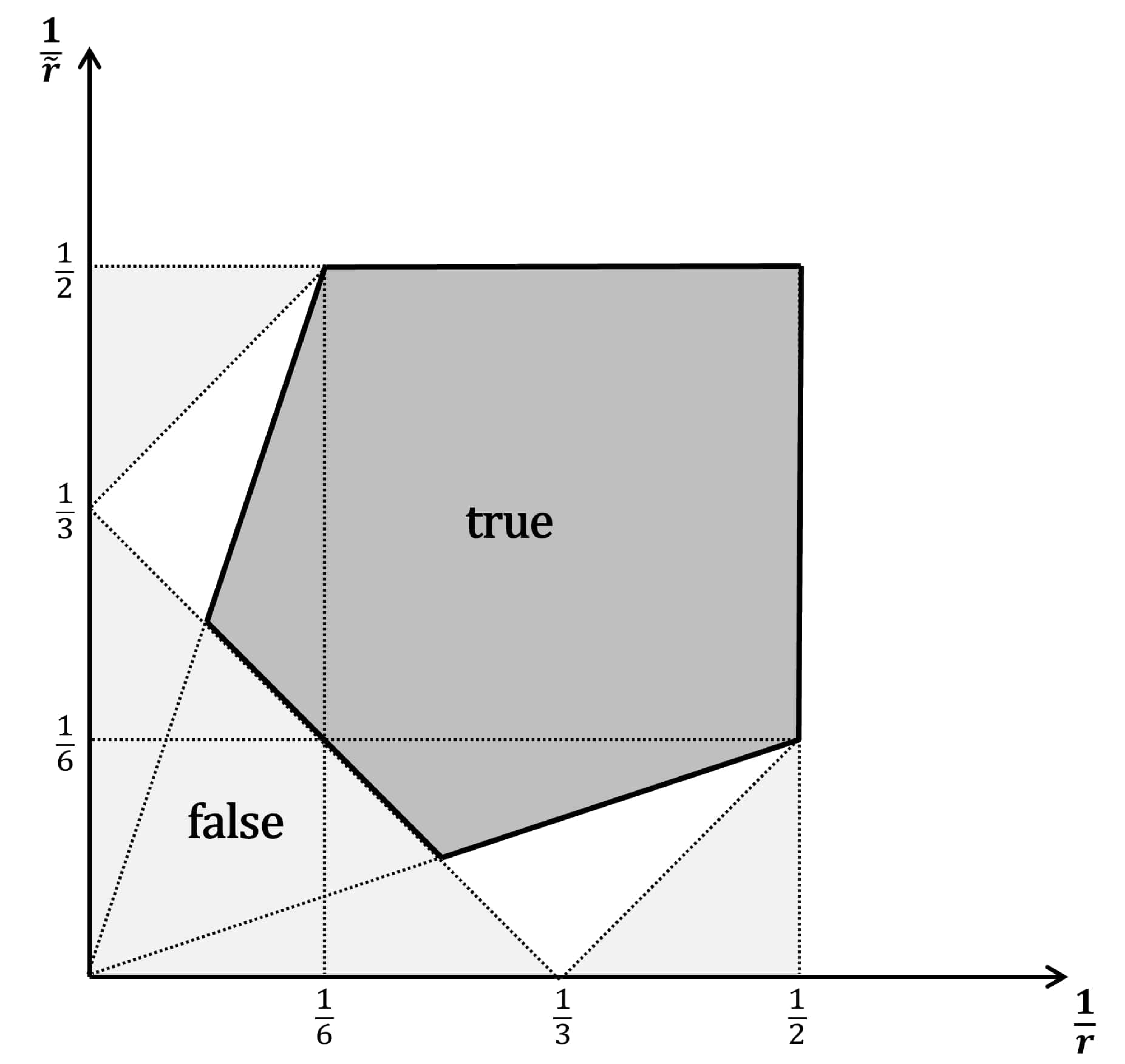} % second figure itself
	\includegraphics[width=0.45\textwidth]{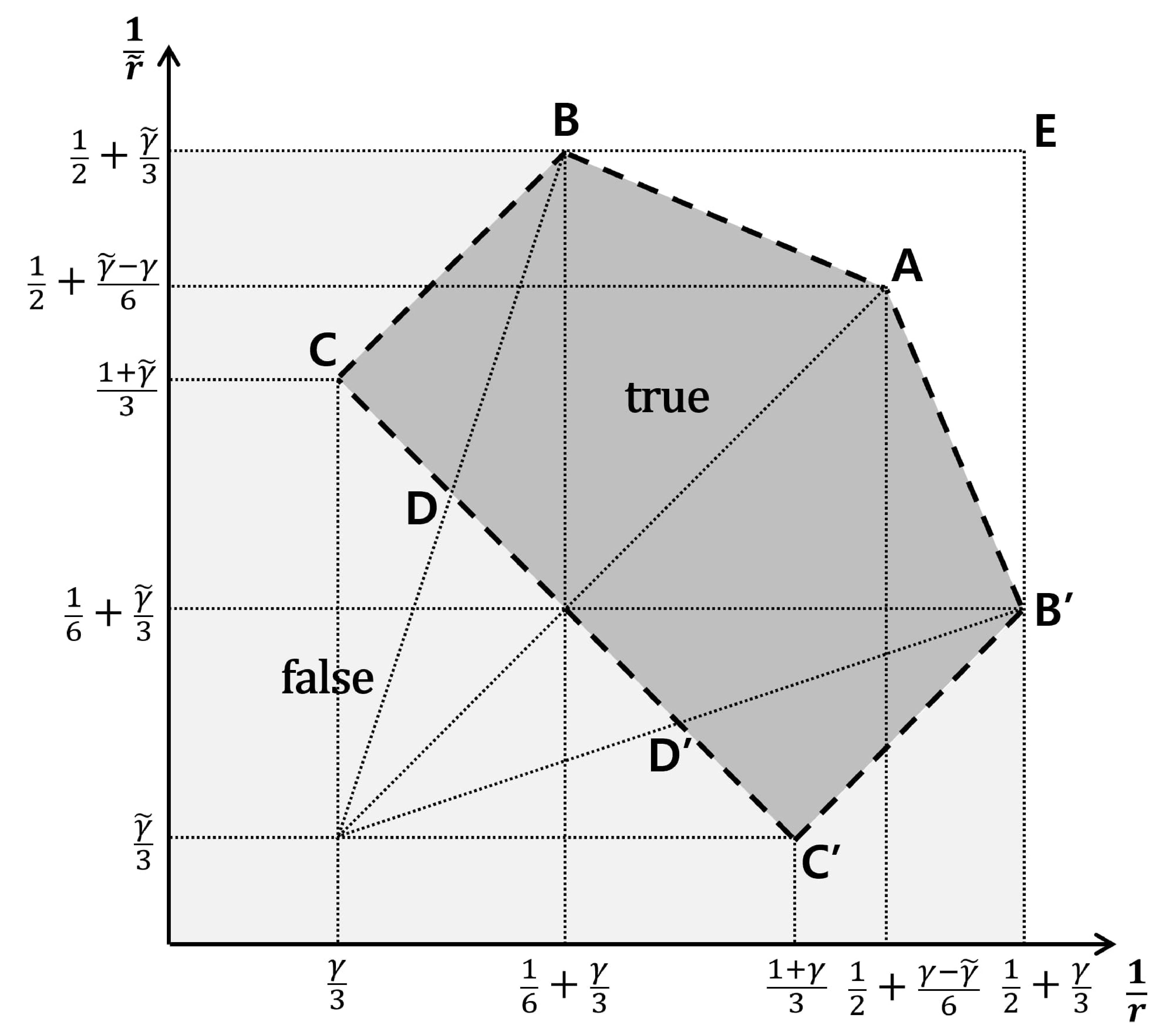} % second figure itself
		%\caption{repulsive case $\lambda=-1$}
	\end{tabular}\\
	\footnotesize{The case $\gamma=\tilde{\gamma}=0\qquad \qquad  \qquad \qquad \qquad \qquad \text{The case } \gamma, \tilde{\gamma}>0$}
	\caption{The validity of $(1/r, 1/\tilde{r})$ on \eqref{inhomo}.}
\end{figure}

\begin{thm}\label{thm_inhomo}
	Let $0<\gamma, \tilde{\gamma}<1$.
	The estimate \eqref{inhomo} holds
	if $(q,r)\in \mathcal{AC}_{\gamma, \sigma}$ and $(\tilde{q}, \tilde{r})\in \mathcal{AC}_{\tilde{\gamma}, -\sigma}$ satisfying 
\bq\label{main_cond_1}
\frac{1}{q}+\frac{1}{\tilde{q}} < 1, \quad  \quad \lt| \frac{1}{\tilde{r}}-\frac{\tilde{\gamma}}{3} -\frac{1}{r}+\frac{\gamma}{3} \rt| <\frac{1}{3}.
\eq 
and one of the following cases hold:\newline
	$\bullet$  when $\frac{1}{r}-\frac{\gamma}{3}=\frac{1}{\tilde{r}}-\frac{\tilde{\gamma}}{3}$
		\bq\label{main_cond_11}
		\frac{1}{6}+\frac{\gamma}{3}<\frac{1}{r} \leq \frac{1}{2}-\frac{\tilde{\gamma}-\gamma}{6},
		\eq
	$\bullet$ when $\frac{1}{r}-\frac{\gamma}{3}<\frac{1}{\tilde{r}}-\frac{\tilde{\gamma}}{3}$
		\begin{gather}
			\label{main_cond_21}\frac{1}{r}-\frac{\gamma}{3} \leq -\frac{2-\gamma-\tilde{\gamma}}{\gamma+\tilde{\gamma}}\left( \frac{1}{\tilde{r}}-\frac{\tilde{\gamma}}{3}\right)+\frac{3-\gamma-\tilde{\gamma}}{3(\gamma+\tilde{\gamma})}, \\
			\label{main_cond_22}  
			\frac{1}{q}<\frac{3}{2}+3\lt(\frac{1}{r}-\frac{\gamma}{3}\rt) -3\lt(\frac{1}{\tilde{r}}-\frac{\tilde{\gamma}}{3} \rt), \quad \tilde{\gamma}<\frac{1}{2},
		\end{gather}
	$\bullet$ when $\frac{1}{r}-\frac{\gamma}{3}>\frac{1}{\tilde{r}}-\frac{\tilde{\gamma}}{3}$
		\begin{gather}
			\label{main_cond_31} \frac{1}{\tilde{r}}-\frac{\tilde{\gamma}}{3} \leq -\frac{2-\gamma-\tilde{\gamma}}{\gamma+\tilde{\gamma}}\left(\frac{1}{r}-\frac{\gamma}{3}\right)+\frac{3-\gamma-\tilde{\gamma}}{3(\gamma+\tilde{\gamma})},\\
			\label{main_cond_32}
			\frac{1}{\tilde{q}}<\frac{3}{2}+3\lt(\frac{1}{\tilde{r}}-\frac{\tilde{\gamma}}{3} \rt)-3\lt(\frac{1}{r}-\frac{\gamma}{3}\rt), \quad  \gamma<\frac{1}{2}.
		\end{gather}
\end{thm}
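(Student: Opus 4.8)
The plan is to run the dyadic/bilinear scheme for inhomogeneous Strichartz estimates of \cite{F2, KS, V2} in the weighted category, using Theorem~\ref{thm_homo} as the ``$L^2$-type'' input and a weighted fixed-time dispersive estimate as the ``$L^1\to L^\infty$-type'' input, thereby upgrading \eqref{inhomo} from the admissible range $\mathcal{AD}$ to the wider range $\mathcal{AC}$.

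\emph{Reduction to a bilinear form.} Absorbing the source weight by writing $F=|x|^{\tilde\gamma}\tilde F$, the estimate \eqref{inhomo} is equivalent to the boundedness from $L_t^{\tilde q'}L_x^{\tilde r'}$ to $L_t^{q}L_x^{r}$ of the map $\tilde F\mapsto |x|^{-\gamma}\int_0^t e^{i(t-s)\Delta}\big(|x|^{\tilde\gamma}\tilde F(s)\big)\,ds$, which by duality amounts to
\[
\left|\iint_{s<t}\big\langle\, |x|^{-\gamma}e^{i(t-s)\Delta}\big(|y|^{\tilde\gamma}\tilde F(s)\big),\ \tilde G(t)\,\big\rangle\,ds\,dt\right|\ \lesssim\ \|\tilde F\|_{L_t^{\tilde q'}L_x^{\tilde r'}}\,\|\tilde G\|_{L_t^{q'}L_x^{r'}}.
\]
The hypothesis $1/q+1/\tilde q<1$ from \eqref{main_cond_1} is exactly what is needed to invoke the Christ--Kiselev lemma \cite{CK} and drop the causal restriction $s<t$; after that the kernel factors as $e^{it\Delta}e^{-is\Delta}$, and I would split the $(t,s)$ integral dyadically into pieces $B_k$ supported on $|t-s|\sim 2^k$, $k\in\mathbb{Z}$, so that it suffices to estimate $\sum_k B_k$.

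\emph{Two bounds per block.} Each $B_k$ I would estimate in two complementary ways. First, splitting the time line into intervals of length $\sim 2^k$ and applying the weighted homogeneous estimate \eqref{1} together with its dual for suitable auxiliary exponents in $\mathcal{AD}_{\gamma,\sigma}$, $\mathcal{AD}_{\tilde\gamma,-\sigma}$, Hölder in time produces a bound $|B_k|\lesssim 2^{k\theta_+}\|\tilde F\|\,\|\tilde G\|$, summable as $k\to-\infty$ when $\theta_+>0$. Second, from the explicit kernel $e^{i\tau\Delta}(x,y)=c\,\tau^{-3/2}e^{i|x-y|^2/4\tau}$ one derives a weighted dispersive estimate of the form
\[
\big\|\,|x|^{-\gamma}e^{i\tau\Delta}\big(|y|^{-\tilde\gamma}g\big)\,\big\|_{L_x^{r}}\ \lesssim\ |\tau|^{-\mu}\,\|g\|_{L_x^{\tilde r'}},
\]
with $\mu$ forced by scaling, which gives $|B_k|\lesssim 2^{k\theta_-}\|\tilde F\|\,\|\tilde G\|$, summable as $k\to+\infty$ when $\theta_-<0$. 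Interpolating these two block estimates and summing $\sum_k\min(2^{k\theta_+},2^{k\theta_-})$, the geometric series converges precisely when $\theta_+>0>\theta_-$; translating these through the scaling identity \eqref{sc_1} and its counterpart for $(\tilde q,\tilde r)$ yields exactly the gap condition in \eqref{main_cond_1} and the three cases \eqref{main_cond_11}, \eqref{main_cond_21}--\eqref{main_cond_22}, \eqref{main_cond_31}--\eqref{main_cond_32}, the trichotomy being governed by the sign of $\big(1/r-\gamma/3\big)-\big(1/\tilde r-\tilde\gamma/3\big)$, i.e.\ by which effective exponent dictates the $k\to\pm\infty$ behavior of the sum.

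\emph{Main obstacle.} The delicate point is the weighted dispersive estimate and the determination of its sharp range. Since the weighted kernel $|\tau|^{-3/2}|x|^{-\gamma}|y|^{-\tilde\gamma}$ carries no $|x-y|$-decay, Stein--Weiss cannot be applied directly; one has to separate the near-diagonal zone $|x|,|y|\lesssim|\tau|^{1/2}$, where the uniform bound $|\tau|^{-3/2}$ is combined with a genuine homogeneous weighted estimate for $e^{i\tau\Delta}$ as in \cite{OR, KLS, KLS2}, from the complementary zone, where stationary phase recovers the missing decay. The constraints under which these weighted estimates hold — beyond the inclusions $\gamma<3/r$, $\tilde\gamma<3/\tilde r$ already built into \eqref{acad}, one also needs restrictions tying $\gamma+\tilde\gamma$ to the scaling — are what force the side conditions $\gamma<1/2$, $\tilde\gamma<1/2$ and the linear constraints of slope $-\tfrac{2-\gamma-\tilde\gamma}{\gamma+\tilde\gamma}$ in \eqref{main_cond_21} and \eqref{main_cond_31}; keeping all of this compatible with the non-sharp regime $1/q+1/\tilde q<1$ demanded by Christ--Kiselev and with the admissibility classes $\mathcal{AD}_{\gamma,\sigma}$, $\mathcal{AD}_{\tilde\gamma,-\sigma}$ is the bookkeeping that makes everything close. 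Since every ingredient above — Theorem~\ref{thm_homo}, the dispersive decay, Stein--Weiss, and Christ--Kiselev — has a counterpart in all dimensions $n\ge 3$, the same argument gives the higher-dimensional version.
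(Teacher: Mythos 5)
Your sketch correctly anticipates most of the paper's key ingredients: a weighted fixed-time dispersive bound (the paper derives it from the Pitt inequality, Lemma~\ref{lem1}), a near/far-origin split of the spatial domain at the scale $|\tau|^{1/2}$ for the pieces where $1/r$ or $1/\tilde r$ exceeds $1/2$, a weighted homogeneous Strichartz/smoothing input, and a real-interpolation argument to assemble the dyadic pieces. But the opening step is a genuine error, and it is not a cosmetic one. You claim that $1/q+1/\tilde q<1$ lets you invoke Christ--Kiselev to drop the causal restriction $s<t$ and then proceed with the unrestricted bilinear form. Once that restriction is gone, the form factorizes as $\bigl\langle e^{it\Delta}u_0, \tilde G\bigr\rangle$ with $u_0=\int e^{-is\Delta}(|y|^{\tilde\gamma}\tilde F)\,ds$, and the resulting estimate is \emph{exactly} the $TT^*$ bound obtained by composing the homogeneous estimate \eqref{1} with its dual. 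That bound holds only when $(q,r)\in\mathcal{AD}_{\gamma,\sigma}$ and $(\tilde q,\tilde r)\in\mathcal{AD}_{\tilde\gamma,-\sigma}$, which in particular forces $q,\tilde q\geq 2$. The whole content of the theorem is the extension to pairs in $\mathcal{AC}$ with $\tilde q<2$ — precisely the pairs $(\tilde q_0,\tilde r_0)$ used in Section~\ref{sec3} to cover $\alpha\geq 3/2$ — for which the unrestricted bilinear form is genuinely unbounded, so Christ--Kiselev cannot supply those pairs. The extended range comes from \emph{exploiting} the causal structure, not from discarding it; this is exactly why the paper, in the introduction, contrasts the $\mathcal{AD}$ range reachable by Christ--Kiselev and $TT^*$ with the wider $\mathcal{AC}$ range of Theorem~\ref{thm_inhomo}.

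What the paper actually does is retain $\Omega=\{s<t\}$ and perform a Whitney decomposition of $\Omega$ into dyadic squares $Q=I\times J$ with $|I|\sim|J|\sim\operatorname{dist}(I,J)\sim 2^j$; this is strictly finer than your annulus decomposition $|t-s|\sim 2^k$. The separate localization of $s$ and $t$ to the intervals $I,J$ is what lets the sum over $Q\in Q_j$ close via Cauchy--Schwarz and H\"older under $1/a+1/\tilde a\leq 1$. The local estimates on each square (Proposition~\ref{lem2}) are then established by the ingredients you list (the far-from-origin piece in the paper is handled by the dual homogeneous estimate plus Plancherel rather than stationary phase, but that is a minor variant), and the sum over $j$ is closed not by a direct geometric series but by perturbing $q,\tilde q$ around the scaling-critical values and applying the bilinear real-interpolation lemma of Bergh--L\"ofstr\"om on the weighted sequence spaces $\ell^s_\infty$. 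To repair your argument you would need to delete the Christ--Kiselev reduction entirely, replace the annulus split by the Whitney square decomposition of $\{s<t\}$, and route the summation through this bilinear interpolation rather than a two-sided geometric bound.
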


We would like to give some sharpness in the theorem.
For the class $\mathcal{AC}_{\gamma, \sigma}$ and $\mathcal{AC}_{\tilde{\gamma}, -\sigma}$,
the condition \eqref{acad} consists of necessary conditions where the estimate \eqref{inhomo} hold, whereas \eqref{sc_1} is taken account of for convenience of application. 
In fact, such type of \eqref{sc_1} can be replaced by the sharp condition
\begin{equation}\label{sc}
	\frac{1}{q}+\frac{1}{\tilde{q}}=\frac{3}{2}(1-\frac{1}{r}-\frac{1}{\tilde{r}})+\frac{\gamma+\tilde{\gamma}}{2}
\end{equation}
in Theorem \ref{thm_inhomo}.
Our proof will be shown with the scaling condition \eqref{sc}.
On the other hand, $1/q+1/\tilde{q}\leq 1$ is also a necessary condition, due to the time-translation invariant property of \eqref{inhomo}.
We provide the theorem for the non-sharp case $1/q+1/\tilde{q}<1$, because the sharp case (line $CC'$ in Figure \ref{pic}) does not improve the range of $\alpha$ on the well-posedness result to \eqref{3DINLS}, 
Meanwhile, the second condition of \eqref{main_cond_1} corresponding to the region between two lines $BC$ and $B'C'$ is needed for \eqref{inhomo} to hold. For more details on the necessary conditions, see Proposition \ref{prop}.

There are additional comments on the remaining ones \eqref{main_cond_11}-\eqref{main_cond_32}.
The upper bound of \eqref{main_cond_11} in the diagonal case (line $AE$) seems to be restricted, but it is the best possible range obtained by the decay estimates, Lemma \ref{lem1}, where $1/r+1/\tilde{r} \leq 1$. 
The equality case of \eqref{main_cond_21} and \eqref{main_cond_31} represent to lines $AB$ and $AB'$ in turn.
By the Christ-Kiselev Lemma and $TT^{\ast}$-argument as shown before, one could extend the validity of $(1/r, 1/\tilde{r})$ beyond the line $AB$ when $\tilde{\gamma}< \gamma$ and the line $AB'$ when $\gamma<\tilde{\gamma}$ though, it is not a crucial part to derive the well-posedness result. Finally, the restrictions \eqref{main_cond_22} and \eqref{main_cond_32} arise from the extension of $(1/r, 1/\tilde{r})$ to the optimal range, i.e. the second one in \eqref{main_cond_1}.
In other words, if we additionally assume  
\[
\frac{1}{\tilde{b}}-\frac{\tilde{\gamma}}{3} < 3\lt( \frac{1}{b}-\frac{\gamma}{3}\rt) \quad \text{and} \quad \frac{1}{b}-\frac{\gamma}{3} < 3\lt( \frac{1}{\tilde{b}}-\frac{\tilde{\gamma}}{3}\rt),
\]
corresponding to lines $BD$ and $B'D'$, \eqref{main_cond_22} and \eqref{main_cond_32} can be removed.

As mentioned before, it was studied in \cite{F2, KS, V2} that the non-weighted version of \eqref{inhomo} is true  
for the pair $(q, r)$ satisfying the case $\gamma=0$ of \eqref{acad} including with $r=2, \infty$ and similarly for $(\tilde{q}, \tilde{r})$
if $$
\frac{1}{q}+\frac{1}{\tilde{q}}=3(1-\frac{1}{r}-\frac{1}{\tilde{r}})
$$
and $(1/r, 1/\tilde{r})$ lies in the dark gray colored area of the left figure in Figure \ref{pic}.  
From this fact, one may expect that the weighted inhomogeneous estimates hold for a pentagon $EBDD'B'E$ in Figure \ref{pic}, translating it along the $1/r$-axis by $\gamma/3$ and the $1/\tilde{r}$-axis by $\tilde{\gamma}/3$, respectively.
However, two points are observed from Theorem \ref{thm_inhomo}. 
First, from the arguments in the previous works it is not obvious that \eqref{inhomo} holds for $1/r$ and $1/\tilde{r}$ beyond $1/2$. 
We split the space domain into regions around/far from the origin, and handle each domain differently. 
Even if it is not a significant region to the application, 
we would like to mention that the case $1/r+1/\tilde{r}>1$ is quite challenging.
Second, the smoothing effect causes the pentagon to spread out to the optimal region compared to the non-weighted setting.
More precisely, the weighted $L^p$ spaces allow us to obtain the gain of regularity for the Schr\"odinger flow and it facilitate to obtain the triangles $BCD$ and $B'C'D'$ which will play a crucial role to deal with the unsolved problem of the INLS model in 3D energy-critical case.

\subsection{Well-posedness results of \eqref{3DINLS}}
We present the local and small data global well-posedness of \eqref{3DINLS} when $3/2\leq \alpha<11/6$. 
Taking advantage of Theorem \ref{thm_inhomo}, we will deal with the nonlinear term with strong singularity. 

For appropriate $(q, r)\in \mathcal{AD}_{\gamma}$ and $(\tilde{q}, \tilde{r}) \in \mathcal{AD}_{\tilde{\gamma}}$,
one can get 
\bq\label{nonlinear}
\|\nabla(|x|^{-\alpha} |u|^{4-2\alpha} u)\|_{L_t^{\tilde{q}'}L_x^{\tilde{r}'} (|x|^{\tilde{r}'\tilde{\gamma}})} \lesssim \|\nabla u\|_{L_t^{q}L_x^{r} (|x|^{-r\gamma})}^{5-2\alpha}
\eq
by the H\"older inequality and the weighted Sobolev inequality.
For the exponents $(q, r)$ and $(\ti{q}, \ti{r})$, then it should be satisfied that
\[
1=(5-2\alpha)\lt(\frac{1}{r}-\frac{\gamma}{3}\rt)+\frac{1}{\ti{r}}-\frac{\ti{\gamma}}{3}-\frac{4-3\alpha}{3}, \quad \frac{1+\gamma}{3}<\frac{1}{r}.
\]
But the condition $\tilde{q}\ge2$ on the class $\mathcal{AD}_{\tilde{\gamma}}$ leads to the existence result with the restriction $\alpha<3/2$, see \cite{LS}.
However, the improved inhomogeneous Strichartz estimates (Theorem \ref{thm_inhomo}) allow us to choose $(\tilde{q}, \tilde{r})\in \mathcal{AC}_{\tilde{\gamma}, 0}=\mathcal{AC}_{\tilde{\gamma}}$ with no restriction of $\tilde{q}\ge 2$.
This is the reason why we refine the inhomogeneous Strichartz estimates.

We consider convenient spaces $W=W(I\times \mathbb{R}^3)$ and $\dot{W}^1=\dot{W}^1(I \times \mathbb{R}^3)$ defined as the closure of the test functions under the following norms
\begin{equation*}
	\|f\|_{W} := \||x|^{-\gamma_0} f\|_{L_t^{q_0}(I;L_x^{ r_0})}\quad \text{and} \quad  \|f\|_{\dot{W}^1} := \|\nabla f\|_{W}
\end{equation*}
with 
\[
\frac{1}{q_0}=\frac{3}{2(17-6\alpha)}+\frac{3\delta}{2},\quad  
\frac{1}{r_0}=\frac{1}{2}-\frac{1}{17-6\alpha}+\frac{\gamma_0}{3}-\delta
\]
for sufficiently small $\delta>0$.
One sees $(q_0, r_0)\in \mathcal{AD}_{\gamma_0}$.
We consider $N=N(I\times \mathbb{R}^3)$ and $\dot{N}^1=\dot{N}^1(I\times \mathbb{R}^3)$ as the nonlinearity spaces equipped with the norm
\begin{equation*}
	\|F\|_{N} := \||x|^{\ti{\gamma}_0}F \|_{L_t^{\ti{q}_0'}(I;L_x^{\ti{r}_0'})} \quad  \text{and} \quad	\|F\|_{\dot{N}^1} := \|\nabla  F\|_{N}
\end{equation*}
where $(\tilde{q}_0, \tilde{r}_0)$ satisfies
\[
\frac{1}{\ti{q}_0}=\frac{1}{2}+\frac{1}{17-6\alpha}-\frac{3\delta(5-2\alpha)}{2},\quad  
\frac{1}{\tilde{r}_0}=\frac{1}{6}-\frac{2}{3(17-6\alpha)}+\frac{\ti{\gamma}_0}{3}+\delta(5-2\alpha).
\]
It is easy to check $(\ti{q}_0, \ti{r}_0) \in \mathcal{AC}_{\ti{\gamma}_0}$ for sufficiently small $\delta>0$.
These norms on the spaces $W, \dot{W}^1, N$ and $\dot{N}^1$ are particularly chosen to be well adapted to control the nonlinearity, see Lemma \ref{lem_nonlinear}.

We also define the weighted Strichartz spaces $S_{\gamma}=S_\gamma(I \times \mathbb{R}^n)$ and $\dot{S}_{\gamma}^1=\dot{S}^1_\gamma(I \times \mathbb{R}^n)$ for $0<\gamma<1/2$ equipped with
$$ \|f\|_{S_\gamma}
:= \sup \|f\|_{L_{t}^q(I;L_{x}^r(|x|^{-r\gamma})) } \quad \text{and} \quad
\|f\|_{\dot{S}^1_\gamma} :=  \| \nabla f\|_{S_{\gamma} },$$
respectively. Here, the supremum is taken over $(q, r)\in \mathcal{AD}_{\gamma}$ satisfying 
\bq\label{cond_qr}
\frac{1}{6}+\frac{\gamma}{3}+\frac{2}{3(17-6\alpha)}-\delta(5-2\alpha)<\frac{1}{r}<\frac{1}{2}+\frac{\gamma}{3}-\frac{1}{17-6\alpha}+\frac{3\delta(5-2\alpha)}{2}.
\eq 
It is also noted that  $S_{\gamma} \subseteq W$ and $\dot{S}_{\gamma}^1 \subseteq \dot{W}^1$ in the sense that
\bq\label{relation_WS}
\|f\|_{W}\lesssim \|f\|_{S_{\gamma}}, \qquad \|f\|_{\dot{W}^1}\lesssim \|f\|_{\dot{S}_{\gamma}^1}.
\eq

Now we present our local well-posedness result of \eqref{3DINLS} in the unsolved case $3/2\leq \alpha<11/6$.

\begin{thm}(Local well-posedness)\label{thm}
Let $I$ be a compact time interval and let $u_0 \in H^{1}(\mathbb{R}^3)$ be such that
	\begin{equation}\label{as}
	 \|e^{it\Delta} u_0\|_{\dot{W}^{1}(I \times \mathbb{R}^3)} \leq \eta
	\end{equation}
	for some $0<\eta\leq \eta_0$ where $\eta_0>0$ is a small constant.
	If $3/2\leq \alpha<11/6$, then there exist a unique solution $u$ to \eqref{3DINLS} in $ C_t H_x^{1}(I\times \mathbb{R}^3) \cap \dot{S}_{\gamma}^1(I\times \mathbb{R}^3)$.
	Moreover, we have
	\begin{align}
	& \|u\|_{\dot{W}^{1}(I \times \mathbb{R}^3)} \lesssim \eta,\label{w1}\\
%	&\|u\|_{\dot{X}^{1-\sigma}(I \times \mathbb{R}^n)} \lesssim \eta, \label{w11}\\
	&\|u\|_{S_{\gamma} (I \times \mathbb{R}^3)} \lesssim \|u_0\|_{L^2}, \label{w3} \\
	&\|u\|_{\dot{S}^1_{\gamma}(I \times \mathbb{R}^3)} \lesssim  \|u_0\|_{\dot{H}^{1}} +\eta^{\beta+1}. \label{w2}
	\end{align}
	Furthermore, the continuous dependence on the initial data holds with associated solution $\tilde{u} \in C_t H_x^{1}(I\times \mathbb{R}^3) \cap \dot{S}_{\gamma}^1(I\times \mathbb{R}^3)$ such as
	\begin{equation}\label{co}
	\| u(t)- \tilde{u}(t) \|_{S_{\gamma}} \lesssim \|u_0 -\tilde{u}_0\|_{L^2}.
	\end{equation}
\end{thm}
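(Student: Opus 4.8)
The plan is to run a standard contraction‐mapping / fixed‐point scheme for the Duhamel operator
\[
\Phi(u)(t) = e^{it\Delta}u_0 - i\lambda \int_0^t e^{i(t-s)\Delta}\bigl(|x|^{-\alpha}|u|^{4-2\alpha}u\bigr)(s)\,ds
\]
on a suitable closed ball of $C_tH_x^1 \cap \dot S_\gamma^1$, using Theorem \ref{thm_inhomo} as the main linear input and Lemma \ref{lem_nonlinear} (the nonlinear estimate) to close the estimates. First I would record the relevant linear bounds: by definition of $\dot S_\gamma^1$ and the admissibility $(q,r)\in\mathcal{AD}_\gamma$ with \eqref{cond_qr}, the homogeneous estimate \eqref{1} gives $\|e^{it\Delta}u_0\|_{\dot S_\gamma^1}\lesssim \|u_0\|_{\dot H^1}$ and $\|e^{it\Delta}u_0\|_{S_\gamma}\lesssim\|u_0\|_{L^2}$; meanwhile the inhomogeneous estimate \eqref{inhomo}, now available on the wide class $\mathcal{AC}$, lets me bound $\|\int_0^t e^{i(t-s)\Delta}F\,ds\|_{\dot S_\gamma^1}$ and $\|\cdot\|_{S_\gamma}$ by $\|F\|_{\dot N^1}$ and $\|F\|_{N}$ respectively — this is exactly where the choice $(\tilde q_0,\tilde r_0)\in\mathcal{AC}_{\tilde\gamma_0}$ (with no constraint $\tilde q_0\ge 2$) is essential and the old $\mathcal{AD}$-theory fails. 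One checks the pairs $(q_0,r_0),(\tilde q_0,\tilde r_0)$ satisfy the compatibility conditions \eqref{main_cond_1}–\eqref{main_cond_32} for $3/2\le\alpha<11/6$, a finite computation in the exponents.

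Next I would invoke the nonlinear estimate: for $F = |x|^{-\alpha}|u|^{4-2\alpha}u$ one has, by the weighted Hölder and weighted Sobolev inequalities underlying \eqref{nonlinear}, bounds of the schematic form $\|F\|_{\dot N^1}\lesssim \|u\|_{\dot W^1}^{\beta}\|u\|_{\dot W^1}\lesssim\|u\|_{\dot S_\gamma^1}^{\beta+1}$ with $\beta = 4-2\alpha>0$ (using \eqref{relation_WS}), and a matching difference estimate $\|F(u)-F(\tilde u)\|_{N}\lesssim (\|u\|_{\dot W^1}^{\beta}+\|\tilde u\|_{\dot W^1}^{\beta})\|u-\tilde u\|_{S_\gamma}$. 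Combining with the linear bounds, on the ball $\{\|u\|_{\dot W^1}\le C\eta,\ \|u\|_{S_\gamma}\le C\|u_0\|_{L^2},\ \|u\|_{\dot S_\gamma^1}\le C(\|u_0\|_{\dot H^1}+\eta^{\beta+1})\}$ we get $\|\Phi(u)\|_{\dot W^1}\le \eta + C\eta^{\beta+1}\le C\eta$ for $\eta\le\eta_0$ small, and similarly $\|\Phi(u)\|_{S_\gamma}\lesssim\|u_0\|_{L^2}$ and $\|\Phi(u)\|_{\dot S_\gamma^1}\lesssim\|u_0\|_{\dot H^1}+\eta^{\beta+1}$, so $\Phi$ maps the ball to itself; the difference estimate with the smallness of $\eta$ makes $\Phi$ a contraction in the (weaker) $S_\gamma$ metric, yielding a unique fixed point. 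The bounds \eqref{w1}, \eqref{w3}, \eqref{w2} are then read off from membership in the ball, and \eqref{co} is the contraction inequality applied to two solutions. Persistence in $C_tH_x^1$ follows by applying the admissible (non-weighted or weighted) Strichartz estimates at regularity $L^2$ and $\dot H^1$ once the solution space norms are controlled, plus the standard continuity-in-time argument.

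I expect the main obstacle to be purely bookkeeping with the exponents: one must verify that the concrete pairs $(q_0,r_0)$ and $(\tilde q_0,\tilde r_0)$ defined via $1/(17-6\alpha)$ and the parameter $\delta$ genuinely land in $\mathcal{AD}_{\gamma_0}$ and $\mathcal{AC}_{\tilde\gamma_0}$ and jointly satisfy all of \eqref{main_cond_1}–\eqref{main_cond_32} — in particular the second inequality of \eqref{main_cond_1} (the $|1/\tilde r - \tilde\gamma/3 - 1/r + \gamma/3|<1/3$ window) and the sign/branch conditions — precisely on the range $3/2\le\alpha<11/6$ and for $\delta$ small; this is where the endpoint $\alpha = 11/6$ enters. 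The genuinely new analytic content is already packaged in Theorem \ref{thm_inhomo}, so the remaining work is to confirm the scaling relation \eqref{sc} (equivalently the $1/q,1/\tilde q$ identities above) holds with the dimensional/scaling balance forced by the nonlinearity $4-2\alpha$, namely $1 = (5-2\alpha)(1/r-\gamma/3) + (1/\tilde r - \tilde\gamma/3) - (4-3\alpha)/3$, and that the strict inequality $1/q_0 + 1/\tilde q_0 < 1$ holds — this last point is why the non-sharp case of Theorem \ref{thm_inhomo} suffices and the sharp line $CC'$ is not needed.
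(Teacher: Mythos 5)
Your proposal is essentially the paper's own proof: the paper also runs a Duhamel fixed-point scheme (phrased as a Picard iteration $u^{(m)}$ starting from $u^{(0)}=0$), uses Lemma~\ref{lem_inhomo St} as the linear input with the crucial observation that $(\tilde q_0,\tilde r_0)\in\mathcal{AC}_{\tilde\gamma_0}$ without the $\tilde q_0\ge 2$ constraint, uses Lemma~\ref{lem_nonlinear} exactly in the schematic form you quote, closes the contraction in the weaker $S_\gamma$ metric, and reads off \eqref{w1}--\eqref{w2} and \eqref{co} from the resulting bounds. The only detail you wave at rather than spell out — also carried out explicitly in the paper — is propagating the $\dot W^1$ bound to the limit (the paper passes to a weakly convergent subsequence in $\dot W^1$), and the exponent bookkeeping \eqref{c1}--\eqref{c6} that pins down the admissible range of $\gamma_0,\tilde\gamma_0$ and confirms the $\alpha<11/6$ threshold.
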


We also obtain the small data global well-posedness for the energy-critical INLS.
In the critical case, the local solution exists in a time interval depending on the data $u_0$ itself and not on its norm.
For this reason, $\|u_0\|_{H^1}$ is assumed to be small and the global well-posedness result is stated as follows.

\begin{cor}(Global well-posedness for small data).
Let $u_0\in H^{1}(\mathbb{R}^3)$ be such that
	\begin{equation*}
\| u_0\|_{H^{1}(\mathbb{R}^3)}	 \leq \eta_0
	\end{equation*}
	for small $\eta_0>0$.
If $3/2\leq \alpha<11/6$, then there exists a unique global solution of the problem \eqref{3DINLS} with
	$u \in C_tH_x^{1}([0,\infty)\times \mathbb{R}^3) \cap \dot{S}^1_{\gamma}([0,\infty)\times \mathbb{R}^3)$.
	Moreover, we have
	\begin{equation*}
 \|u\|_{\dot{W}^1(\mathbb{R}\times \mathbb{R}^3)}, \|u\|_{\dot{S}_{\gamma}^1 (\mathbb{R}\times \mathbb{R}^3)} \lesssim \|u_0\|_{\dot{H}^1}
	\end{equation*}
	\begin{equation*}
	\|u\|_{S_{\gamma}(\mathbb{R}\times \mathbb{R}^n)} \lesssim  \|u_0\|_{L^2}.
	\end{equation*}
\end{cor}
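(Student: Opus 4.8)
The plan is to derive the global statement from the local well-posedness theorem (Theorem \ref{thm}) by a standard continuation argument together with the smallness hypothesis on the full $H^1$-norm of the data. First I would observe that the smallness assumption $\|u_0\|_{H^1}\le\eta_0$ is exactly what upgrades the conditional hypothesis \eqref{as} to an unconditional one on the whole time line: by the homogeneous weighted Strichartz estimate (Theorem \ref{thm_homo}, applied with $\sigma$ chosen so that $(q_0,r_0)\in\mathcal{AD}_{\gamma_0}$, after differentiating so that $\nabla e^{it\Delta}u_0=e^{it\Delta}\nabla u_0$), one has
\begin{equation*}
\|e^{it\Delta}u_0\|_{\dot{W}^1(\R\times\R^3)}=\||x|^{-\gamma_0}\nabla e^{it\Delta}u_0\|_{L_t^{q_0}L_x^{r_0}}\lesssim\|\nabla u_0\|_{\dot{H}^{\sigma}}\lesssim\|u_0\|_{\dot{H}^1}\le\|u_0\|_{H^1}\le\eta_0,
\end{equation*}
where the interpolation between $L^2$ and $\dot H^1$ control of $\|\nabla u_0\|_{\dot H^\sigma}$ for the relevant $0<\sigma<1$ is used; thus \eqref{as} holds on $I=\R$ (or $[0,\infty)$) with $\eta\simeq\eta_0$. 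This is the crucial point where the critical nature intervenes: unlike subcritical problems, we cannot shrink the time interval to make the free-evolution norm small, so we pay for global existence with global smallness.

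Next I would feed this into Theorem \ref{thm} directly on the noncompact interval. Theorem \ref{thm} is stated for compact $I$, so the clean way is to apply it on every compact $[0,T]\subset[0,\infty)$, obtaining a solution $u_T$ on $[0,T]$ with the bounds \eqref{w1}--\eqref{w2} \emph{uniform in $T$} (the constants in those estimates do not depend on $|I|$, only on the fixed exponents and on $\eta_0$). By the uniqueness part of Theorem \ref{thm}, the family $\{u_T\}$ is consistent, i.e. $u_{T'}|_{[0,T]}=u_T$ for $T<T'$, so they glue to a single solution $u$ on $[0,\infty)$. Passing to the supremum over $T$ in the uniform bounds, monotone convergence gives $u\in C_tH_x^1([0,\infty)\times\R^3)\cap\dot S^1_\gamma([0,\infty)\times\R^3)$ together with $\|u\|_{\dot W^1(\R\times\R^3)},\|u\|_{\dot S^1_\gamma(\R\times\R^3)}\lesssim\eta_0\lesssim\|u_0\|_{\dot H^1}$ and $\|u\|_{S_\gamma(\R\times\R^3)}\lesssim\|u_0\|_{L^2}$, exactly as claimed (and negative times are handled identically by time reversal, or by running the same argument on $(-\infty,0]$).

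For the mass bound I would note that \eqref{w3} in Theorem \ref{thm} already gives $\|u_T\|_{S_\gamma}\lesssim\|u_0\|_{L^2}$ with a $T$-independent constant, so it survives the limit; if one prefers, the global $L^2$-conservation $\|u(t)\|_{L^2}=\|u_0\|_{L^2}$ (valid since the nonlinearity is gauge-invariant and, for $3/2\le\alpha<11/6$, the solution is regular enough in the constructed space to justify the energy/mass identities by the usual approximation argument) combined with the homogeneous weighted Strichartz estimate \eqref{1} applied to the Duhamel formula gives the same conclusion. Finally, global uniqueness in the stated class follows from local uniqueness by a connectedness argument on $[0,\infty)$: the set of times up to which two solutions agree is nonempty, closed by continuity in $C_tH^1_x$, and open by the local uniqueness statement of Theorem \ref{thm}.

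The main obstacle I anticipate is purely bookkeeping rather than conceptual: verifying that the implied constants in \eqref{w1}--\eqref{w3} are genuinely independent of the length of the time interval—this is true because the weighted Strichartz constants in Theorems \ref{thm_homo} and \ref{thm_inhomo} and the nonlinear estimate of Lemma \ref{lem_nonlinear} are interval-independent, and the contraction in the proof of Theorem \ref{thm} is run in norms over $I$ with a fixed small radius—and then justifying that the glued solution inherits membership in $\dot S^1_\gamma$ globally (the supremum over the admissible $(q,r)$ in the definition of $\dot S^1_\gamma$ commutes with the limit $T\to\infty$ by monotonicity of the $L^q_t$ norms in $T$). There is no genuinely new analytic input needed beyond Theorem \ref{thm}.
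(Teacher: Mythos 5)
Your proof is essentially the standard continuation argument that the paper implicitly relies on (the corollary is stated without a separate proof after Theorem \ref{thm}), and the overall structure is correct: use smallness of $\|u_0\|_{H^1}$ and the homogeneous weighted Strichartz estimate to verify \eqref{as} on all of $\R$, then apply Theorem \ref{thm} on each compact $[0,T]$, glue by uniqueness, and pass the uniform (interval-independent) bounds \eqref{w1}--\eqref{w2} to the limit.

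One small inaccuracy worth flagging: you invoke Theorem \ref{thm_homo} ``with $\sigma$ chosen so that $(q_0,r_0)\in\mathcal{AD}_{\gamma_0}$'' and then justify $\|\nabla u_0\|_{\dot H^\sigma}\lesssim\|u_0\|_{\dot H^1}$ by interpolation ``for the relevant $0<\sigma<1$.'' This would actually be \emph{false} if $\sigma>0$, since $\|\nabla u_0\|_{\dot H^\sigma}=\|u_0\|_{\dot H^{1+\sigma}}$ is not controlled by $\|u_0\|_{H^1}$. What saves the argument is that in the paper's notation $\mathcal{AD}_{\gamma_0}$ abbreviates $\mathcal{AD}_{\gamma_0,0}$, i.e. the scaling condition forces $\sigma=0$ for the pair $(q_0,r_0)$; so Theorem \ref{thm_homo} applied to $f=\nabla u_0$ gives directly $\|e^{it\Delta}\nabla u_0\|_{L_t^{q_0}L_x^{r_0}(|x|^{-r_0\gamma_0})}\lesssim\|\nabla u_0\|_{L^2}=\|u_0\|_{\dot H^1}$, and no interpolation is needed or possible. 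With this correction the rest of your argument goes through as written.
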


\begin{rem}
	Recently, the authors of \cite{CCF} showed that the INLS model is ill-posed in $H^1(\mathbb{R}^3)$ for $3/2\leq \alpha<2$ and the nonlinearity index $\beta\in 2\mathbb{Z}$. In fact, they proved that there is no $C^{1+\beta}$-flow in $H^1(\mathbb{R}^3)$ generated by the INLS, strongly relying on the classical Strichartz space. However, from Theorem \ref{thm}, it seems that the weighted Strichartz space is enable to construct the $C^{1+\beta}$-flow to obtain the well-posedness result in general situations.  
\end{rem}

The remainder of this paper is organized as follows. 
In Section \ref{sec2}, we extend the inhomogeneous Strichartz estimates on the weighted spaces. 
For this, we establish the time-localized estimates (Proposition \ref{lem2}) valid for a broader range by investigating the time-decay estimates (Lemma \ref{lem1}) and the smoothing effect of the Schr\"odinger flow on weighted spaces.
We then apply a bilinear interpolation argument to the localized estimates, leading to the global inhomogeneous Strichartz estimates, which hold in the extended range. 
These estimates are employed to build up the well-posedness of the 3D energy-critical model with strong singularity in Section \ref{sec3}.
Section \ref{sec4} is devoted to discussing the sharpness of the theorem, which plays a crucial role in deriving the well-posedness result.

Throughout this paper, the letter $C$ stands for a positive constant which may be different
at each occurrence.
We write $A \lesssim B$ to indicate that there exists a constant $c>0$ such that $A \leq cB$, and $A \sim B$ if there exists $c>1$ such that $A/c \leq B \leq cA$.  
Additionally, $\hat{f}$ and $\mathcal{F}(f)$ stand for the Fourier transform of $f$, and $\mathcal{F}^{-1}$ represents the inverse Fourier transform. 
The weighted spaces $L_{t}^q L_{x}^r(|x|^{-r\gamma})$ is defined by
\begin{equation*}
	\|f\|_{L_{t}^q L_{x}^r(|x|^{-r\gamma})}=  \bigg( \int_{\mathbb{R}} \bigg(\int_{\mathbb{R}^n}  |x|^{-r\gamma} |f(x)|^r  dx \bigg)^{\frac{q}{r}} dt\bigg)^{\frac{1}{q}}.
\end{equation*}

\section{Refined Strichartz estimates}\label{sec2}

In this section, we prove Theorem \ref{thm_inhomo}.
For this, we will show a strong type of the bilinear estimate
\begin{equation}\label{d2}
	\int_{\mathbb{R}} \int_{-\infty}^t \left\langle e^{-is\Delta } F(s),\, e^{-it\Delta } G(t) \right\rangle ds dt \lesssim \|F\|_{L_t^{q'} L_x^{r'}(|x|^{r'\gamma})} \|G\|_{L_t^{\tilde{q}'} L_x^{\tilde{r}'}(|x|^{\tilde{r}' \tilde{\gamma}})}
\end{equation}
under the same conditions as in Theorem \ref{thm_inhomo}.
Then we divide the domain of integral in $t$ on the left-hand side of \eqref{d2} into two parts, $t\ge0$ and $t<0$.
By a change of the variable $t \rightarrow -t$, $t<0$ can be changed to $t\ge0$.
The part $t\ge 0$ is viewed as $[0,t)=(0,t)\cap [0, \infty)$. Replacing $F$ with $\chi_{[0, \infty)}(s) F$, \eqref{d2} implies 
\begin{equation*}
	\int_{\mathbb{R}} \int_{0}^t \left\langle e^{-is\Delta } F(s),\, e^{-it\Delta } G(t) \right\rangle ds dt \lesssim \|F\|_{L_t^{q'} L_x^{r'}(|x|^{r'\gamma})} \|G\|_{L_t^{\tilde{q}'} L_x^{\tilde{r}'}(|x|^{\tilde{r}' \tilde{\gamma}})}
\end{equation*}
which is equivalent to \eqref{inhomo} by the duality.

We decompose $\Omega=\{ (s, t)\in \mathbb{R}^2 :s<t \}$ dyadically away from $s=t$
thanks to the Whitney decomposition (see \cite{F,S});
let us consider the family of dyadic squares $Q_j$ whose side length is $2^j$ for $j\in \mathbb{Z}$.
Then, each square $Q =I\times J$ in $Q_j$ has a property
\begin{equation}\label{wht}
	2^j\sim |I| \sim |J| \sim \textnormal{dist}(I, J),
\end{equation}
and the squares $Q$ are pairwise essentially disjoint so that $\Omega = \cup_{j\in \mathbb{Z}} \cup_{Q \in Q_j} Q$.
Thus \eqref{d2} can be rewritten as
\begin{align}\label{d3}
	\sum_{j\in \mathbb{Z}} \sum_{Q \in Q_j}  |T_{j, Q}(F, G)| \lesssim \|F\|_{L_t^{q'} L_x^{r'}(|x|^{r'\gamma})} \|G\|_{L_t^{\tilde{q}'} L_x^{\tilde{r}'}(|x|^{\tilde{r}' \tilde{\gamma}})}
\end{align}
where $T_{j, Q}$ is a time-localized operator of $2^j$-size
\begin{equation*}
	T_{j, Q}(F, G) :=  \int_{t\in J} \int_{s \in I}  \left\langle e^{-is\Delta} F(s),\ e^{-it\Delta} G(t) \right\rangle ds dt.
\end{equation*}
Then we turn to showing the localized estimates
\begin{equation}\label{eq2}
	|T_{j, Q}(F, G)| \lesssim 2^{-j\beta( a, b, \tilde{a}, \tilde{b})} \|F\|_{L_t^{a'}(I; L_x^{b'}(|x|^{b'\gamma}))} \|G\|_{L_t^{\tilde{a}'}(J; L_x^{\tilde{b}'}(|x|^{\tilde{b}' \tilde{\gamma}}))}
\end{equation}
with 
\begin{equation}\label{be}
	\beta( a, b, \tilde{a}, \tilde{b}) := -\lt(\frac{1}{a}+\frac{1}{\tilde{a}} \rt) +\frac{3}{2}\lt(1-\frac{1}{b}-\frac{1}{\tilde{b}}\rt)+\frac{\gamma+\tilde{\gamma}}{2}.
\end{equation}

\begin{figure}
	\centering
	\includegraphics[width=0.6\textwidth]{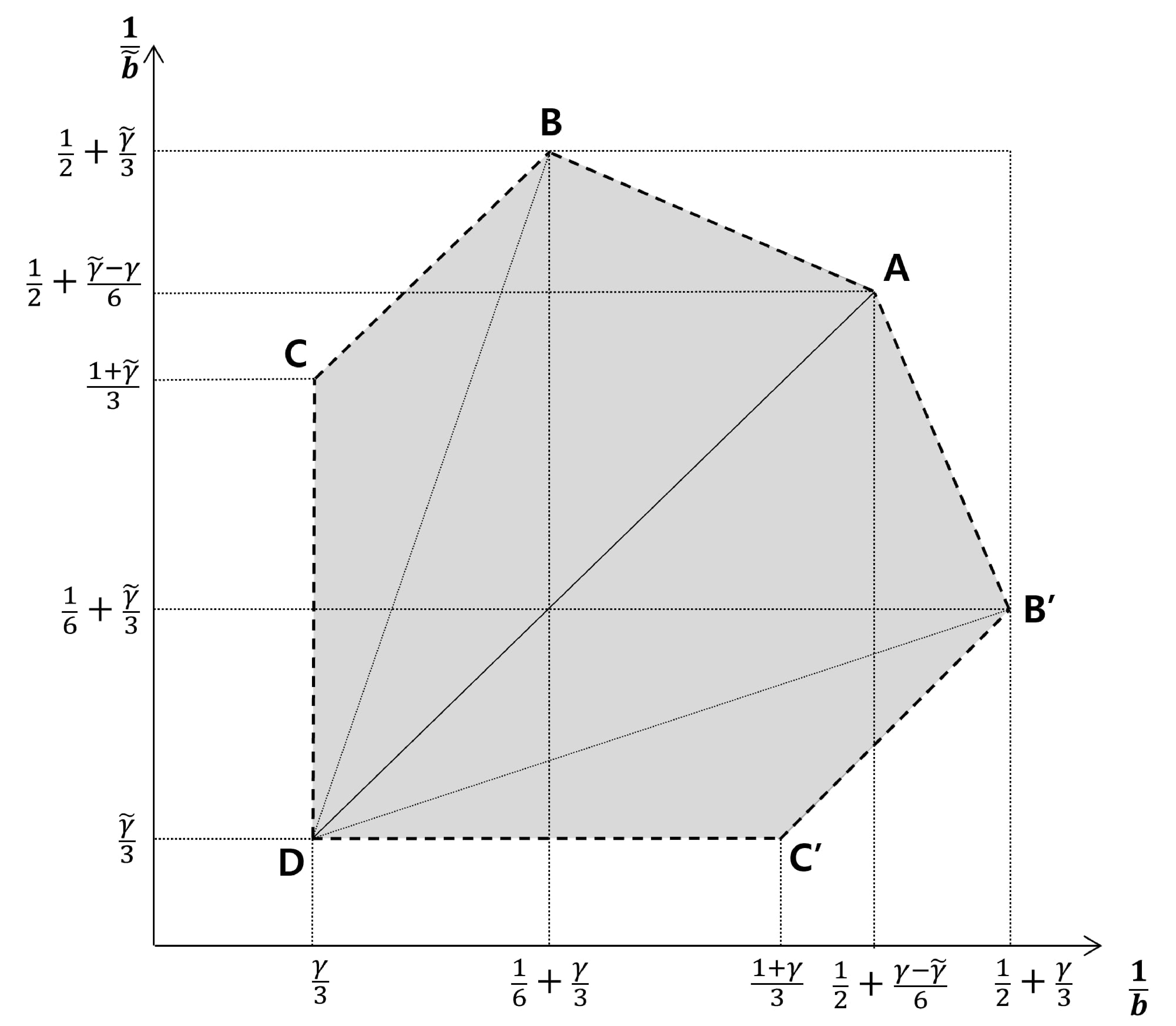} % second figure itself
	\label{pic1}
	\caption{The validity of $(1/b, 1/\tilde{b})$ on \eqref{eq2}. }
\end{figure}

The following proposition provides the validity of $a, b, \tilde{a}, \tilde{b}$ where the time localized inhomogeneous estimate \eqref{eq2} holds in weighted spaces.

\begin{prop}\label{lem2}
	Let $0< \gamma, \tilde{\gamma} < 1$.
	Then the estimate \eqref{eq2} holds for $j \in \mathbb{Z}$ if $1\leq a, \tilde{a}\leq \infty$ ,
	\bq
	\begin{aligned}
		\label{i00}  
		\frac{\gamma}{3}<\frac{1}{b},\quad \frac{\tilde{\gamma}}{3}<\frac{1}{\tilde{b}}, \quad \lt| \frac{1}{\tilde{b}}-\frac{\tilde{\gamma}}{3} -\frac{1}{b}+\frac{\gamma}{3}\rt|<\frac{1}{3} 
	\end{aligned}
	\eq 
	and one of the following sets of conditions additionally holds:\newline
	$\bullet$ diagonal case $\overline{AD}$
	\bq\label{i01}
	\frac{\gamma}{3}< \frac{1}{b} \leq \frac{1}{2}+\frac{\gamma-\tilde{\gamma}}{6},
	\eq
	$\bullet$ upper-side of  $\overline{AD}$ (i.e. $\square ABCD$)
	\begin{gather}
		\label{i02}\frac{1}{b}-\frac{\gamma}{3} < -\frac{2-\gamma-\tilde{\gamma}}{\gamma+\tilde{\gamma}}\left( \frac{1}{\tilde{b}}-\frac{\tilde{\gamma}}{3}\right)+\frac{3-\gamma-\tilde{\gamma}}{3(\gamma+\tilde{\gamma})}, \\
		\label{i03} \frac{3}{2}\lt( \frac{1}{\tilde{b}}-\frac{\tilde{\gamma}}{3}-\frac{1}{b}+\frac{\gamma}{3}\rt) \leq \frac{1}{a} \leq \frac{1}{\tilde{a}'}\leq 1,\\
		\label{i04}  \frac{3}{2}\lt( \frac{1}{\tilde{b}}-\frac{\tilde{\gamma}}{3}\rt) -\frac{9}{2} \lt(\frac{1}{b}-\frac{\gamma}{3}\rt) <\frac{1}{\tilde{a}} ,  \quad \tilde{\gamma}<\frac{1}{2} ,
	\end{gather}
	$\bullet$ lower-side of  $\overline{AD}$ (i.e. $\square AB'C'D$)
	\begin{gather}
		\frac{1}{\tilde{b}}-\frac{\tilde{\gamma}}{3} < -\frac{2-\gamma-\tilde{\gamma}}{\gamma+\tilde{\gamma}}\left(\frac{1}{b}-\frac{\gamma}{3}\right)+\frac{3-\gamma-\tilde{\gamma}}{3(\gamma+\tilde{\gamma})},\nonumber\\
		 \frac{3}{2}\lt( \frac{1}{b}-\frac{\gamma}{3} -\frac{1}{\tilde{b}}+\frac{\tilde{\gamma}}{3}\rt) \leq \frac{1}{\tilde{a}}\leq \frac{1}{a'}\leq 1,\nonumber\\
	\frac{3}{2}\lt( \frac{1}{b}-\frac{\gamma}{3}\rt) -\frac{9}{2} \lt( \frac{1}{\tilde{b}}-\frac{\tilde{\gamma}}{3}\rt) < \frac{1}{a} , \quad \gamma<\frac{1}{2}.\nonumber
	\end{gather}
	(See Figure \ref{pic1}.)
\end{prop}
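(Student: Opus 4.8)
\textbf{Proof proposal for Proposition \ref{lem2}.}

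The plan is to prove the time-localized bilinear bound \eqref{eq2} by exploiting the fact that, after a parabolic rescaling, every Whitney square $Q=I\times J\in Q_j$ can be normalized to unit size, so it suffices to treat the case $j=0$; the homogeneity of the estimate then restores the factor $2^{-j\beta(a,b,\tilde a,\tilde b)}$ automatically, since $\beta$ is precisely the scaling exponent coming from \eqref{sc}/\eqref{be}. On a unit square with $\mathrm{dist}(I,J)\sim 1$, the kernel $e^{i(t-s)\Delta}$ enjoys the fixed-time decay estimate; I would invoke Lemma \ref{lem1} (the time-decay estimate on weighted spaces advertised in the introduction) to obtain, for each fixed $s\in I$, $t\in J$, an inequality of the form
\[
\lt|\lal e^{-is\Delta}F(s),\, e^{-it\Delta}G(t)\ral\rt| \ls |t-s|^{-\theta} \|F(s)\|_{L_x^{b'}(|x|^{b'\gamma})}\|G(t)\|_{L_x^{\tilde b'}(|x|^{\tilde b'\tilde\gamma})},
\]
valid in a range of $(1/b,1/\tilde b)$ governed by the first-row conditions \eqref{i00} and, in the off-diagonal regimes, by the geometric constraints \eqref{i02} and its mirror image, with the decay rate $\theta$ determined by the weighted dispersive estimate. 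The conditions $\gamma/3<1/b$, $\tilde\gamma/3<1/\tilde b$ ensure the weights are locally integrable against the relevant Lebesgue exponents, and $|\,(1/\tilde b-\tilde\gamma/3)-(1/b-\gamma/3)\,|<1/3$ is exactly the window in which the weighted decay estimate remains non-degenerate.

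Next I would integrate this pointwise bound over $(s,t)\in I\times J$. In the diagonal case \eqref{i01}, where $1/b+1/\tilde b\le 1$ (equivalently the shifted coordinates lie below the line $\overline{AD}$), the kernel $|t-s|^{-\theta}$ is integrable on $I\times J$ after the rescaling normalizes $\mathrm{dist}(I,J)\sim1$, and one simply applies H\"older in $t$ and $s$ with $1\le a,\tilde a\le\infty$ to pass from $L_x$-norms to the mixed $L_t^{a'}L_x^{b'}$, $L_t^{\tilde a'}L_x^{\tilde b'}$ norms. The bookkeeping of exponents is forced by \eqref{sc}, so no additional constraint on $a,\tilde a$ beyond $1\le a,\tilde a\le\infty$ is needed here. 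In the off-diagonal cases (the quadrilaterals $ABCD$ and $AB'C'D$), where $1/b+1/\tilde b$ may exceed $1$, the naive pointwise estimate is lossy; here I would instead interpolate between the diagonal weighted estimate and a Hardy--Littlewood--Sobolev-type fractional integration bound in the time variable. That is, after rescaling, $T_{0,Q}$ is controlled by a one-dimensional fractional integral operator $\ds\int_J\int_I |t-s|^{-\theta}\,(\cdots)\,ds\,dt$, and the Hardy--Littlewood--Sobolev inequality in $t$ converts this into the product of $L_t^{a'}$ and $L_t^{\tilde a'}$ norms precisely under the condition \eqref{i03} (the constraint $\tfrac{3}{2}(\cdots)\le 1/a\le 1/\tilde a'\le1$ is the HLS admissibility condition together with the ordering dictated by which side of $\overline{AD}$ we are on), while \eqref{i04} and the restriction $\tilde\gamma<1/2$ capture the endpoint where the weighted spatial estimate borders on failure and must be supplemented by a separate near-origin/far-from-origin splitting of $\mathbb{R}^3$ as hinted in the introduction. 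The lower-side case is identical after swapping $(F,b,\gamma,a)\leftrightarrow(G,\tilde b,\tilde\gamma,\tilde a)$ and using the mirror symmetry of the Whitney decomposition in $s\leftrightarrow t$.

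The main obstacle I anticipate is establishing the correct weighted decay estimate (Lemma \ref{lem1}) in the full range $|\,(1/\tilde b-\tilde\gamma/3)-(1/b-\gamma/3)\,|<1/3$ including the regime $1/b+1/\tilde b>1$ — i.e.\ where one or both Lebesgue exponents $b',\tilde b'$ are small and the weight is strongly singular at the origin. On the unit spatial scale the Schr\"odinger kernel is bounded, so the only danger is near $x=0$; there one must use the smoothing effect of $e^{it\Delta}$ on weighted spaces (the gain of regularity mentioned in the introduction) to absorb the singularity $|x|^{-\gamma}$, rather than crude pointwise kernel bounds. Concretely I would split $F=F\chi_{|x|\le 1}+F\chi_{|x|>1}$ (and likewise $G$), estimate the far-from-origin pieces by the standard unweighted dispersive decay, and for the near-origin pieces invoke a weighted local smoothing / Kato-type bound to trade the weight for a fractional derivative, which then feeds back into the dispersive estimate. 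Getting the exact exponent $\theta$ and verifying that it matches $\beta$ under \eqref{sc} is the delicate computational heart of the argument; once Lemma \ref{lem1} is in hand with sharp exponents, the passage to Proposition \ref{lem2} via rescaling, H\"older, and Hardy--Littlewood--Sobolev is routine.
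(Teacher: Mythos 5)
Your high-level plan (weighted dispersive estimate $\Rightarrow$ localized bilinear bound $\Rightarrow$ H\"older in time, with a near/far-from-origin splitting to push past $1/b+1/\tilde b=1$) is broadly in the right direction and you correctly identify the main danger zone, but there are several concrete gaps that would prevent the argument from closing as written.

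First, the Hardy--Littlewood--Sobolev step in $t$ is a misfire. On a Whitney square $Q=I\times J\in Q_j$ one has $|t-s|\sim 2^j$ \emph{uniformly}, so the kernel $|t-s|^{-\theta}$ is essentially constant on $Q$ and the time integrals are handled by plain H\"older, not HLS; that is exactly what the paper does. The constraints \eqref{i03} and \eqref{i04} do not originate from HLS admissibility. Instead the paper establishes \eqref{eq2} only at five corner configurations of $(1/b,1/\tilde b)$ and then \emph{interpolates}: case $(a)$ on the diagonal (pure decay + H\"older, $1\le a,\tilde a\le\infty$), cases $(b),(c)$ at the tips of the forbidden region $1/b+1/\tilde b>1$, and cases $(d),(e)$ at the bottom corners $(\gamma/3,\,(1+\tilde\gamma)/3)$ with $a=\tilde a=2$. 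The inequalities \eqref{i03}--\eqref{i04} fall out of that bilinear interpolation bookkeeping (Appendix A), and $1/a\le 1/\tilde a'$ is later needed to sum over $Q\in Q_j$ by Cauchy--Schwarz, not for any fractional-integration bound.

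Second, you leave out the ingredient that is actually indispensable for the application: cases $(d)$ and $(e)$ are proved by invoking the \emph{weighted homogeneous Strichartz estimate with nonzero smoothing order} (Theorem \ref{thm_homo} with $\sigma=\tfrac12-\varepsilon$). This is what opens the triangles $BCD$, $B'C'D'$ and is the reason the constraints $\gamma<1/2$, $\tilde\gamma<1/2$ appear. Your proposal gestures at ``smoothing'' but never ties it to a concrete estimate; without Theorem \ref{thm_homo} at $\sigma\neq0$ and $a=\tilde a=2$, you only recover the non-smoothed pentagon and lose exactly the region needed for $\alpha\ge 3/2$.

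Third, your near/far splitting is underspecified and does not match the mechanism that makes the paper's cases $(b),(c)$ work. The paper splits only one of $F,G$, at the $j$-dependent scale $|x|\lessgtr 2^{j/2}$, and estimates the two pieces by \emph{different} tools: on the ball $B$ it applies the weighted decay estimate \eqref{wd} together with $\|\chi_B\|_{L^a}\sim 2^{3j/(2a)}$, while on $B^c$ it uses Plancherel plus the $L^2_x$-dual of the homogeneous Strichartz estimate \eqref{1} with $\sigma=0$, exploiting $|x|^{-\tilde\gamma}\in L^{2\tilde b/(2-\tilde b)}(B^c)$. Your proposal instead splits both $F$ and $G$ at a fixed scale and appeals to ``weighted local smoothing / Kato-type'' bounds on the near pieces; as stated it is not clear the powers of $2^j$ would cancel to give precisely $2^{-j\beta}$, and you never produce the $L^2$ argument that controls the far piece. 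Finally, the appeal to parabolic rescaling to reduce to $j=0$ is fine but purely cosmetic; the substantive work is the five endpoint cases plus the interpolation, which your sketch does not supply.
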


To sum it over $Q\in Q_j$, we note that for each $I$ there are at most two squares $Q$ in $Q_j$ with interval $J$ satisfying \eqref{wht}
and so 
it follows that
\begin{align*}
	& \sum_{Q\in Q_j}\left|T_{j, Q}(F, G) \right|\\
	&\quad\quad \leq  2^{-j\beta(a,b;\tilde{a}, \tilde{b})}\sum_{Q \in Q_j}  \|F \|_{L_t^{a'}(I; L_x^{b'}(|x|^{b'\gamma}))} \|G\|_{L_t^{\tilde{a}'} (J;L_x^{\tilde{b}'} (|x|^{\tilde{b}' \tilde{\gamma}}))} \\
	&\quad\quad \leq  2^{-j\beta(a,b; \tilde{a}, \tilde{b})} \left(\sum_{Q \in Q_j } \|F \|_{L_t^{a'}(I; L_x^{b'} (|x|^{b' \gamma}))}^{a'}  \right)^{\frac{1}{a'}} \cdot \left(\sum_{Q \in Q_j} \|G\|_{L_t^{\tilde{a}'} (J;L_x^{\tilde{b}'} (|x|^{\tilde{b}' \tilde{\gamma}}))}^{\tilde{a}'} \right)^{\frac{1}{\tilde{a}'}}\\
	&\quad\quad \leq  2^{-j\beta(a,b; \tilde{a}, \tilde{b})} \|F \|_{L_t^{a'} L_x^{b'}(|x|^{b' \gamma})}  \|G \|_{L_t^{\tilde{a}'} L_x^{\tilde{b}'}(|x|^{\tilde{b}' \tilde{\gamma}})}
\end{align*}
by the Cauchy-Schwartz inequality and H\"older inequality for $1/a+1/\tilde{a}\leq 1$.

Next, we take a summation over $j \in \mathbb{Z}$ to deduce \eqref{d3}. 
But it cannot be directly summable because $\beta$ is not always positive.
To gain the summability over $j \in \mathbb{Z}$, we will apply the bilinear interpolation argument.
For non sharp case $1/q+1/\tilde{q}<1$, we fix $r$ and $\tilde{r}$.
Then we choose perturbed exponents of $q, \tilde{q}$ such as
$$ \frac{1}{q_0}=\frac{1}{q} -\varepsilon_0,\quad \frac{1}{q_1}=\frac{1}{q}+2\varepsilon_0,\quad \frac{1}{\tilde{q}_0}=\frac{1}{\tilde{q}}-\varepsilon_0 ,\quad \frac{1}{\tilde{q}_1}=\frac{1}{\tilde{q}}+2\varepsilon_0$$
for a sufficiently small $\varepsilon_0>0.$
Then by making use of Proposition \ref{lem2}, we get
\begin{align*}
	|T_j (F,G)|  \lesssim 2^{-j\beta(q_0, r; \tilde{q}_0, \tilde{r})}& \|F\|_{L_t^{q_0'} L_x^{r'}(|x|^{r'\gamma})}  \|G\|_{L_t^{\tilde{q}_0'} L_x^{\tilde{r}'}(|x|^{\tilde{r}'\tilde{\gamma}})}, \cr
	|T_j (F,G)|  \lesssim 2^{-j\beta(q_0, r; \tilde{q}_1, \tilde{r})}& \|F\|_{L_t^{q_0'} L_x^{r'}(|x|^{r'\gamma})}  \|G\|_{L_t^{\tilde{q}_1'} L_x^{\tilde{r}'}(|x|^{\tilde{r}'\tilde{\gamma}})}, \cr
	|T_j (F,G)|  \lesssim 2^{-j\beta(q_1, r; \tilde{q}_0, \tilde{r})}& \|F\|_{L_t^{q_1'} L_x^{r'}(|x|^{r'\gamma})}  \|G\|_{L_t^{\tilde{q}_0'} L_x^{\tilde{r}'}(|x|^{\tilde{r}'\tilde{\gamma}})}
\end{align*}	
in which $\beta(q_0, r; \tilde{q}_0, \tilde{r}) = 2\varepsilon_0$ and $\beta(q_0, r; \tilde{q}_1, \tilde{r})=\beta(q_1, r; \tilde{q}_0, \tilde{r}) = - \varepsilon_0$.

With a bilinear vector-valued operator
\begin{equation*}
	B(F, G)=\left\{  T_{j}(F, G) \right\}_{j \in \mathbb{Z}},
\end{equation*}
we rewrite the above three estimates as
\begin{align*}
	& \|B(F, G)\|_{\ell_{\infty} ^{2\varepsilon}} \lesssim \|F\|_{L_t^{q_0'} L_x^{r'}(|x|^{r'\gamma})}  \|G\|_{L_t^{\tilde{q}_0'} L_x^{\tilde{r}'}(|x|^{\tilde{r}'\tilde{\gamma}})}\cr
	& \|B(F, G)\|_{\ell_{\infty} ^{-\varepsilon}} \lesssim \|F\|_{L_t^{q_0'} L_x^{r'}(|x|^{r'\gamma})}  \|G\|_{L_t^{\tilde{q}_1'} L_x^{\tilde{r}'}(|x|^{\tilde{r}'\tilde{\gamma}})}\cr
	& \|B(F, G)\|_{\ell_{\infty} ^{-\varepsilon}} \lesssim \|F\|_{L_t^{q_1'} L_x^{r'}(|x|^{r'\gamma})}  \|G\|_{L_t^{\tilde{q}_0'} L_x^{\tilde{r}'}(|x|^{\tilde{r}'\tilde{\gamma}})}.
\end{align*}
Here, $\ell_p^a$ is a weighted sequence space defined for $a \in \mathbb{R}$ and $1 \leq p \leq \infty$ with the norm
\begin{align*}
	\|\{ x_j \}_{j \ge 0} \|_{\ell_p^a} =
	\begin{cases}
		(\sum_{j \ge 0} 2^{jap} |x_j|^p )^{\frac{1}{p}}, & \mbox{if } p \neq \infty \\
		\sup_{j \ge 0} 2^{ja} |x_j|, & \mbox{if } p =\infty.
	\end{cases}
\end{align*}

By applying the following lemma with $s =1$, $p=q'$, $q=\tilde{q}'$ and $\theta_0=\theta_1=1/3$, we now get
\begin{align}\label{end3}
	B \ : \ (A_0, A_1)_{\frac{1}{3},\,q'}\, \times\,(B_0, B_1)_{\frac{1}{3},\,\tilde{q}'}\rightarrow(\ell_{\infty} ^{ 2\varepsilon}, \ell_{\infty} ^{-\varepsilon})_{\frac{2}{3},\,1}
\end{align}
where $A_i=L_t^{q_i'} L_x^{r'}(|x|^{r' \gamma})$, $B_i=L_t^{\tilde{q}_i'} L_x^{\tilde{r}'} (|x|^{\tilde{r}' \tilde{\gamma}})$ for $i=0,1$.

\begin{lem}(\cite{BL}, Section 3.13, Exercise 5(b))
	For $i=0,1$, let $A_i , B_i , C_i$ be Banach spaces and let $T$ be a bilinear operator such that $T: A_0 \times B_0 \rightarrow C_0$, $T : A_0 \times B_1 \rightarrow C_1$, and $T : A_1 \times B_0 \rightarrow C_1$. Then one has
	\begin{equation*}
		T : (A_0,A_1)_{\theta_0, p} \times (B_0, B_1)_{\theta_1, q} \rightarrow (C_0, C_1)_{\theta,1}
	\end{equation*}
	if $0< \theta_i < \theta=\theta_0+\theta_1 <1$ and $1/p + 1/q \geq 1$ for $1 \leq p,q \leq \infty$.
	Here, $(\cdot\,,\cdot)_{\theta,p}$ denotes the real interpolation functor.
\end{lem}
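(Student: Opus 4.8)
The argument is the bilinear counterpart of the classical proof that real interpolation is an exact interpolation functor, carried out in the discrete $J$-method. Write $\bar A=(A_0,A_1)$, $\bar B=(B_0,B_1)$, $\bar C=(C_0,C_1)$, and recall the functionals $K(t,x;\bar X)=\inf_{x=x_0+x_1}(\|x_0\|_{X_0}+t\|x_1\|_{X_1})$ and $J(t,x;\bar X)=\max(\|x\|_{X_0},t\|x\|_{X_1})$ for $x\in X_0\cap X_1$, together with the equivalence of the $K$- and $J$-descriptions of $(X_0,X_1)_{\theta,p}$ and the standard decomposition lemma (\cite{BL}, Chapter~3): any $a\in(A_0,A_1)_{\theta_0,p}$ can be written $a=\sum_{\nu\in\mathbb{Z}}u_\nu$ with $u_\nu\in A_0\cap A_1$, the sum converging in $A_0+A_1$, so that with $a_\nu:=J(2^\nu,u_\nu;\bar A)$ one has $\|\{2^{-\nu\theta_0}a_\nu\}_\nu\|_{\ell^p}\lesssim\|a\|_{(A_0,A_1)_{\theta_0,p}}$, and similarly $b=\sum_\mu v_\mu$ with $b_\mu:=J(2^\mu,v_\mu;\bar B)$ and $\|\{2^{-\mu\theta_1}b_\mu\}_\mu\|_{\ell^q}\lesssim\|b\|_{(B_0,B_1)_{\theta_1,q}}$. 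It is convenient to first establish the asserted inequality for $a\in A_0\cap A_1$ and $b\in B_0\cap B_1$ having finite decompositions, so that every sum below is finite, and then to remove this restriction by density.

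\textbf{Bilinear decomposition and the $J$-functional.} Expand $T(a,b)=\sum_{\nu,\mu}T(u_\nu,v_\mu)$, each term lying in $C_0\cap C_1$ because $v_\mu\in B_1$ forces $T(u_\nu,v_\mu)\in C_1$ and $v_\mu\in B_0$ forces $T(u_\nu,v_\mu)\in C_0$. The three boundedness hypotheses, combined with $\|u_\nu\|_{A_0}\le a_\nu$, $\|u_\nu\|_{A_1}\le2^{-\nu}a_\nu$ and the analogous bounds for $v_\mu$, give $\|T(u_\nu,v_\mu)\|_{C_0}\lesssim a_\nu b_\mu$ and $\|T(u_\nu,v_\mu)\|_{C_1}\lesssim\min(2^{-\nu},2^{-\mu})\,a_\nu b_\mu=2^{-\max(\nu,\mu)}a_\nu b_\mu$. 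Evaluating $J(\cdot,\cdot;\bar C)$ at the scale $t=2^{\max(\nu,\mu)}$ therefore yields $J(2^{\max(\nu,\mu)},T(u_\nu,v_\mu);\bar C)\lesssim a_\nu b_\mu$. Grouping the family $\{T(u_\nu,v_\mu)\}$ by the value $k=\max(\nu,\mu)$ and using the subadditivity of $x\mapsto J(2^k,x;\bar C)$, the $J$-description of $(C_0,C_1)_{\theta,1}$ gives
\[
\|T(a,b)\|_{(C_0,C_1)_{\theta,1}}\ \lesssim\ \sum_{k}2^{-k\theta}\sum_{\max(\nu,\mu)=k}a_\nu b_\mu\ =\ \sum_{\nu,\mu}2^{-\theta\max(\nu,\mu)}a_\nu b_\mu.
\]

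\textbf{Summing the double series.} We bound the last sum by $\|\{2^{-\nu\theta_0}a_\nu\}\|_{\ell^p}\,\|\{2^{-\mu\theta_1}b_\mu\}\|_{\ell^q}$. Split into the regions $\nu\ge\mu$ and $\nu<\mu$. Writing $\alpha_\nu:=2^{-\nu\theta_0}a_\nu$ and $\beta_\mu:=2^{-\mu\theta_1}b_\mu$ and using $\theta=\theta_0+\theta_1$, the region $\nu\ge\mu$ (where $\max(\nu,\mu)=\nu$) contributes, after the substitution $\ell=\nu-\mu\ge0$, the quantity $\sum_{\ell\ge0}2^{-\theta_1\ell}\sum_\nu\alpha_\nu\beta_{\nu-\ell}$, and the region $\nu<\mu$ contributes symmetrically $\sum_{\ell\ge1}2^{-\theta_0\ell}\sum_\mu\alpha_{\mu-\ell}\beta_\mu$. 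The hypothesis $1/p+1/q\ge1$ now enters through the embedding $\ell^q\hookrightarrow\ell^{p'}$, which lets H\"older bound each inner convolution-type sum by $\|\alpha\|_{\ell^p}\|\beta\|_{\ell^q}$ uniformly in $\ell$; and the hypotheses $\theta_0,\theta_1>0$ make the geometric series $\sum_\ell2^{-\theta_i\ell}$ finite. Hence $\|T(a,b)\|_{(C_0,C_1)_{\theta,1}}\lesssim\|\alpha\|_{\ell^p}\|\beta\|_{\ell^q}\lesssim\|a\|_{(A_0,A_1)_{\theta_0,p}}\|b\|_{(B_0,B_1)_{\theta_1,q}}$; the remaining assumption $\theta<1$ is precisely what makes the $J$-description of $(C_0,C_1)_{\theta,1}$ (and the domain spaces $(A_0,A_1)_{\theta_0,p}$, $(B_0,B_1)_{\theta_1,q}$) available.

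\textbf{Main obstacle.} The estimates themselves are elementary; the genuine work is the bookkeeping. Since no $A_1\times B_1$ bound is assumed, $T$ need not be continuous on the sum couple, so one cannot directly pass limits there; instead one runs the estimate above for finitely supported decompositions (equivalently for $a\in A_0\cap A_1$, $b\in B_0\cap B_1$), checks that $\sum_{\nu,\mu}T(u_\nu,v_\mu)$ converges in $C_0+C_1$ to an element independent of the chosen decompositions, that the regrouping by $k=\max(\nu,\mu)$ is legitimate, and then extends to general $a,b$ by density when $p,q<\infty$ and by monotone approximation in the endpoint cases $p=\infty$ or $q=\infty$ (where $A_0\cap A_1$ is no longer dense). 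Each of these points follows the corresponding step in the proof of the linear interpolation theorem in \cite{BL}, Section~3.
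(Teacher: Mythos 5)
Your proof is correct. The paper does not prove this lemma but simply cites it as an exercise in Bergh--L\"ofstr\"om; your $J$-method argument (decomposing $a$ and $b$, bounding $J(2^{\max(\nu,\mu)},T(u_\nu,v_\mu);\bar C)\lesssim a_\nu b_\mu$ from the three boundedness hypotheses, regrouping by $k=\max(\nu,\mu)$, and closing the double sum via the shift-invariant H\"older estimate $\ell^p\times\ell^{p'}$ together with $\ell^q\hookrightarrow\ell^{p'}$ from $1/p+1/q\ge1$ and the geometric series from $\theta_0,\theta_1>0$) is exactly the standard route that the cited exercise intends, and the bookkeeping caveats you flag at the end are the right ones.
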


In the end, we use the real interpolation space identities;
\begin{equation*}
	(\ell_{\infty}^{s_{0}},\, \ell_{\infty}^{s_{1}})_{\theta, 1} = \ell_{1}^{s},\quad s_{0} \neq s_{1}
\end{equation*}
for $s_0, s_1 \in \mathbb{R}$, $s=(1-\theta)s_{0} +\theta s_{1}$, we see
\begin{equation}\label{ide}
	(A_0, A_1)_{\frac{1}{3},\,q'}\, = L_t^{q'} L_x^{r'}(|x|^{r' \gamma}) \quad \text{and} \quad
	(B_0, B_1)_{\frac{1}{3},\,\tilde{q}'}\, = L_{t}^{\tilde{q}'} L_x^{\tilde{r}'} (|x|^{\tilde{r}' \tilde{\gamma}})
\end{equation}
and $(\ell_{\infty} ^{2\varepsilon}, \ell_{\infty} ^{-\varepsilon})_{\frac{2}{3},\,1} = \ell_{1} ^0 $. % since $\frac{1}{3}\cdot2n\varepsilon -\frac{2}{3}\cdot n\varepsilon=0$.
Combining \eqref{end3} and \eqref{ide}, hence it follows
\begin{equation*}
	B\ :\  L_t^{q'} L_x^{r'} (|x|^{r' \gamma}) \times L_t^{\tilde{q}'} L_x^{\tilde{r}'} (|x|^{\tilde{r}' \tilde{\gamma}}) \rightarrow \ell_1^0.
\end{equation*}
which is equivalent to \eqref{d3}.

The requirements in Theorem \ref{thm_inhomo} immediately follows from Proposition \ref{lem2}.
In fact, letting $(a, b)=(q, r)$ and $(\tilde{a}, \tilde{b})=(\tilde{q}, \tilde{r})$, the last condition of \eqref{i00} becomes the last one of \eqref{main_cond_1}. In the diagonal case where $1/r-\gamma/3=1/\tilde{r}-\tilde{\gamma}/3$, \eqref{i01} implies \eqref{main_cond_11} because of $1/q+1/\tilde{q}<1$ and \eqref{sc}. Meanwhile, for the upper-side of the diagomal case, \eqref{i02} and \eqref{i04} show \eqref{main_cond_21} and \eqref{main_cond_22}, respectively, thanks to the scaling condition \eqref{sc}. 
For \eqref{i03}, first inequality can be removed by the first one of \eqref{acad} and so the remainder implies the first one of \eqref{main_cond_1} under \eqref{acad}. 
The lower-side of the diagonal case can be checked in the similar way.
The cases $q=\infty$ and $\tilde{q}=\infty$ can be obtained by fixing $q$ and $\tilde{q}$ and perturbing the exponents $r, \tilde{r}$, but we omit the detail. Hence the proof is done.

\subsection{Time-localized estimates}

We first investigate the decay estimate in the weighted space.
The special case $p=q$ and $\gamma=\tilde{\gamma}$ was shown in \cite{OR}. 
In this work, it is required to get the decay estimates involved with different $\gamma$ and $\tilde{\gamma}$.

\begin{lem}\label{lem1}
	Let $\gamma, \tilde{\gamma} >0$.
	If $1<p'\leq q<\infty$, $ \gamma < 3/q$ and $\tilde{\gamma}-\gamma =3(1/p-1/q)$,
	then we have
	\begin{equation}\label{wd}
		\|e^{it\Delta}f\|_{L^q (|\cdot|^{-q\gamma})}\lesssim |t|^{\frac{3}{2}(\frac{1}{q}-\frac{1}{p'})-\frac{\gamma +\tilde{\gamma}}{2}}\|f\|_{L^{p'}(|\cdot|^{p'\widetilde{\gamma}})}.
	\end{equation}
\end{lem}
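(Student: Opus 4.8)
The plan is to interpolate the standard dispersive $L^1\to L^\infty$ decay estimate for $e^{it\Delta}$ against a weighted endpoint estimate, and then insert the weights by a scaling/Stein–Weiss type argument. First I would recall the free dispersive bound $\|e^{it\Delta}f\|_{L^\infty}\lesssim |t|^{-3/2}\|f\|_{L^1}$ together with the $L^2$ isometry $\|e^{it\Delta}f\|_{L^2}=\|f\|_{L^2}$. The key new ingredient is a weighted version: using the explicit kernel $e^{it\Delta}f(x)=c\,|t|^{-3/2}\int e^{i|x-y|^2/(4t)}f(y)\,dy$, the modulus is pointwise bounded by $c\,|t|^{-3/2}\int|f(y)|\,dy$, but to see the weight one instead writes the operator (after the unitary rescaling $f(y)\mapsto |t|^{-3/4}f(y/\sqrt{|t|})$, which is what produces the precise power of $|t|$) as a composition of modulations with the Fourier transform. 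Concretely, $e^{it\Delta}f = M_t \mathcal F^{-1} M_{1/t}$ up to constants and dilations, where $M_a$ denotes multiplication by $e^{ia|x|^2}$; since the modulations $M_a$ are isometries on every weighted $L^p(|x|^{s})$, the weighted mapping properties of $e^{it\Delta}$ reduce to those of the Fourier transform on weighted Lebesgue spaces.

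So the second step is to invoke the weighted Fourier inequality (the Pitt / Stein–Weiss inequality): $\|\,|x|^{-\gamma}\widehat g\,\|_{L^q} \lesssim \|\,|x|^{\tilde\gamma}g\,\|_{L^{p'}}$ holds precisely when $1<p'\le q<\infty$, $\gamma<3/q$, $\tilde\gamma < 3/p' = 3(1-1/p)$ (this upper bound for $\tilde\gamma$ is automatic here once $\gamma>0$ and the scaling relation holds, but I would double-check it), and the dimensional balance $\gamma+\tilde\gamma = 3(1 - 1/p' - 1/q) + \text{(something)}$ — more precisely the homogeneity constraint that makes the two sides scale the same way, which after tracking the dilation is exactly $\tilde\gamma - \gamma = 3(1/p - 1/q)$. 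Applying this to $g = M_{1/t}(\text{rescaled }f)$ and then undoing the rescaling and the outer modulation (which does not see the weight $|x|^{-q\gamma}$) produces \eqref{wd} with the claimed power $\tfrac32(\tfrac1q - \tfrac1{p'}) - \tfrac{\gamma+\tilde\gamma}{2}$: the $|t|^{-3/2}$ from the kernel combines with the Jacobian factors $|t|^{3/2(1/q)}$ and $|t|^{-3/2/p'\cdot(-1)}$-type terms from the two rescalings and the weights contribute $|t|^{\mp\gamma/2}$, $|t|^{\mp\tilde\gamma/2}$; I would verify the bookkeeping by checking the diagonal case $p=q$, $\gamma=\tilde\gamma$ against the known result of \cite{OR} as a sanity check.

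The main obstacle is getting the sharp weighted Fourier estimate with the precise index conditions stated — $\gamma<3/q$ and $\tilde\gamma-\gamma = 3(1/p-1/q)$ — and confirming it is exactly Pitt's inequality in dimension $3$ in the off-diagonal range $p'\le q$; one must be careful that the classical Pitt inequality is usually stated for $\widehat f$ rather than $\mathcal F^{-1}$ and for a range of exponents that must contain our $(p',q,\gamma,\tilde\gamma)$, and that the borderline behavior ($\gamma = 3/q$ or $p'=1$, $q=\infty$) is correctly excluded. A secondary technical point is to make the reduction $e^{it\Delta} = (\text{dilation})\circ M_t \circ \mathcal F \circ M_t$ rigorous on the Schwartz class and then extend by density, keeping all the constants and powers of $|t|$ straight; this is routine but error-prone, so I would organize it as: (i) state the algebraic identity for the kernel, (ii) rescale to $|t|=1$, (iii) apply Pitt, (iv) rescale back and collect exponents.
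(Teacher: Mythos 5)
Your proposal is correct and follows essentially the same route as the paper: write $e^{it\Delta}f(x)$ via the kernel as (modulation)$\circ\mathcal F\circ$(modulation)$\circ$(dilation by $2t$), observe the modulations are isometries on the weighted $L^p$ spaces, apply the Pitt inequality $\||\xi|^{-\gamma}\hat g\|_{L^q}\lesssim\||x|^{\tilde\gamma}g\|_{L^{p'}}$, and undo the dilation to collect the power of $|t|$. One small slip: the second Pitt condition is $\tilde\gamma<3/p$ (not $3/p'$), though as you suspected it is indeed automatic from $\gamma<3/q$ together with $\tilde\gamma-\gamma=3(1/p-1/q)$.
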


\begin{proof}
	We recall that the solution can be represented as
	\begin{align*}
		e^{it\Delta} f(x) 
		&=(4\pi i t)^{-\frac{3}{2}} \int e^{\frac{i|x-y|^2}{4t}} f(y) dy\\
		&=(4\pi i t)^{-\frac{3}{2}} 
		e^{\frac{ix^2}{4t}} \int e^{-i x\cdot \left(\frac{y}{2t} \right) 
			+ it \left(\frac{y}{2t}\right)^2  } f(y) dy.
	\end{align*}
	We use the change of variable $y/2t \rightarrow y$ to see
	\begin{align*}
		e^{it\Delta} f(x) 
		&=\frac{(2t)^{3}}{(4\pi i t)^{3/2}} e^{\frac{ix^2}{4t}} \mathcal{F}(e^{it|\cdot|^2} f(2t\cdot)) (x).
	\end{align*}
	We apply the Pitt-inequaltiy to above (See \cite{Pi, S3} and \cite{BH});
	\begin{equation*}
		\||\xi|^{-\gamma} \hat{f} \|_{L^q} \leq C \| |x|^{\tilde{\gamma}} f\|_{L^{p'}}
	\end{equation*}
	which holds if and only if $1< p'\leq q <\infty$, $\tilde{\gamma}-\gamma=3(1/p-1/q)$, $0\leq \gamma< 3/q$ and $0\leq \tilde{\gamma} <3/p$.
	Then 
	\begin{align*}
		\||x|^{-\gamma} e^{it\Delta} f\|_{L_x^q}
		&=\frac{(2t)^{3}}{(4\pi i t)^{3/2}} \left\| |x|^{-\gamma}  \mathcal{F}(e^{it|\cdot|^2} f(2t\cdot))\right\|_{L_x^q} \\
		&\leq \frac{C(2t)^{3}}{(4\pi i t)^{3/2}} \left\||y|^{\tilde{\gamma}} f(2t\cdot) \right\|_{L_y^{p'}}\\
		&=\frac{C(2t)^{3-3/p'-\tilde{\gamma}}}{(4\pi i t)^{3/2}} 
		\left\| |y |^{\tilde{\gamma}} f \right\|_{L_y^{p'}}\\
		& \sim t^{\frac{3}{2}-\frac{3}{p'}-\tilde{\gamma}} \left\||y|^{\tilde{\gamma}} f \right\|_{L_y^{p'}}
	\end{align*}
	by using the change of variable again.
	The proof is done.
\end{proof}

We only need to show that the local estimates
\begin{equation*}
	|T_{j, Q}(F, G)| \lesssim 2^{-j\beta( a, b, \tilde{a}, \tilde{b})} \|F\|_{L_t^{a'}(I; L_x^{b'}(|x|^{b'\gamma}))} \|G\|_{L_t^{\tilde{a}'}(J; L_x^{\tilde{b}'}(|x|^{\tilde{b}' \tilde{\gamma}}))}
\end{equation*}
hold with $\beta$ given as in \eqref{be} for the following cases: 
\begin{enumerate}
	\item[$(a)$] $\frac{1}{b}-\frac{\gamma}{3}=\frac{1}{\tilde{b}}-\frac{\tilde{\gamma}}{3}$, $\frac{\gamma}{3}<\frac{1}{b} \leq \frac{1}{2}-\frac{\tilde{\gamma}-\gamma}{6}\ \text{and}\ 1\leq \tilde{a}' \leq a\leq \infty,$ \vspace{5pt}
	\item[$(b)$] $(\frac{1}{b},\, \frac{1}{\tilde{b}})=(\frac{1}{6} +\frac{\gamma}{3},\, \frac{1}{2}+\frac{\tilde{\gamma}-\varepsilon}{3})$ and $1\leq \tilde{a}' \leq a \leq 2$, \vspace{5pt}
	\item[$(c)$] $(\frac{1}{b},\, \frac{1}{\tilde{b}})= (\frac{1}{2}+\frac{\gamma-\varepsilon}{3},\, \frac{1}{6}+\frac{\tilde{\gamma}}{3} )$ and $2\leq \tilde{a}' \leq a\leq \infty$,\vspace{5pt}
	\item[$(d)$] $(\frac{1}{b},\, \frac{1}{\tilde{b}})=(\frac{\gamma+\varepsilon}{3}, \frac{1+\tilde{\gamma}}{3})$ and $a=\tilde{a}=2$ for $\tilde{\gamma}<\frac{1}{2}$,\vspace{5pt}
	\item[$(e)$] $(\frac{1}{b},\, \frac{1}{\tilde{b}} )= (\frac{1+\gamma}{3},  \frac{\tilde{\gamma}+\varepsilon}{3})$ and $a=\tilde{a}=2$ for $\gamma<\frac{1}{2}$, \vspace{5pt}
\end{enumerate}
for sufficiently small $\varepsilon>0$.
Then the case $(a)$ implies the desired one for the diagonal case $1/b-\gamma/3=1/\tilde{b}-\tilde{\gamma}/3$.
It is not difficult to get the upper-side (resp. lower-side) of diagonal case by interpolating with cases $(a)$, $(b)$ and $(d)$ (resp. $(a)$, $(c)$ and $(e)$). We attach the details in Appendix \ref{appen_B}.

Let us start the proof of $(a)$.
By using Lemma \ref{lem1} and the H\"older inequality, we obtain 
\begin{align*}
	\left|T_{j, Q}(F, G) \right|
	& = \left| \int_J \int_I \left\langle e^{-is\Delta} F,\ e^{-it\Delta} G \right\rangle ds dt \right| \\
	&\lesssim \int_J \int_I \| |x|^{-\tilde{\gamma}} e^{i(t-s)\Delta} F \|_{L_x^{\tilde{b}}} \||x|^{\tilde{\gamma}}G \|_{L_x^{\tilde{b}'}}\ ds\, dt\\
	&\lesssim  \int_J \int_I |t-s|^{\frac{3}{2}(\frac{1}{\tilde{b}}-\frac{1}{b'})-\frac{\gamma+\tilde{\gamma}}{2}} \| |x|^{\gamma} F \|_{L_x^{b'}} \||x|^{\tilde{\gamma}}G \|_{L_x^{\tilde{b}'}}\ ds\, dt
\end{align*}
where $\gamma-\tilde{\gamma}=3(1/b-1/\tilde{b})$, $0\leq \gamma <3/b$, $\tilde{\gamma}>0$ and $1<b'\leq \tilde{b} <\infty$.
Since $|t-s|\sim 2^j$ for $s\in I$ and $t \in J$, we have
\begin{align*}
	\left|T_{j, Q}(F, G) \right|
	&\lesssim  2^{j (\frac{3}{2}(\frac{1}{\tilde{b}}-\frac{1}{b'})-\frac{\gamma+\tilde{\gamma}}{2} )} \|F\|_{L_t^{1}(I;L_x^{b'}(|x|^{b'\gamma}))}  \|G \|_{L_t^{1}(J;L_x^{\tilde{b}'}(|x|^{\tilde{b}'\tilde{\gamma}}))}\\
	& \lesssim 2^{-j\beta(a, b, \tilde{a}, \tilde{b})} \|F\|_{L_t^{a'}(I;L_x^{b'}(|x|^{b'\gamma}))}  \|G \|_{L_t^{\tilde{a}'}(J;L_x^{\tilde{b}'}(|x|^{\tilde{b}'\tilde{\gamma}}))}
\end{align*}
by the H\"older inequality with respect to $t,s$.

For case $(d)$, we see that
\begin{align*}
	\left|T_{j, Q}(F, G) \right|
	&\leq  \int_J \left\langle | \int_I e^{i(t-s)\Delta} F ds| ,\  |G(t)| \right\rangle_x  dt  \\
	&=  \left\langle |  |x|^{-\tilde{\gamma}}\int_I \chi_{J}(t)e^{i(t-s)\Delta} F ds| ,\ | |x|^{\tilde{\gamma}}\chi_{J}(t)G(t)| \right\rangle_{x,t}  . 
\end{align*}
By the H\"older inequality with respect to $x, t$ and then applying Theorem \ref{thm_homo} with $s=\frac{1}{2}-\varepsilon$ for sufficiently small $\varepsilon>0$, we see that 
\begin{align*}
	\left|T_{j, Q}(F, G) \right|
	&\leq  \left\|  \int_I e^{i(t-s)\Delta} F(s) ds \right\|_{L_t^2(\mathbb{R}; L_x^{\tilde{b}}(|x|^{-\tilde{b}\tilde{\gamma}}))} 
	\left\| G \right\|_{L_t^{2}(J;L_x^{\tilde{b}'}(|x|^{\tilde{b}'\tilde{\gamma}})) }\\
	&\lesssim  \left\| F \right\|_{L_t^{(\frac{2}{1-\varepsilon})'}(I;L_x^{b'}(|x|^{b'\gamma})) }\left\| G \right\|_{L_t^{2}(J;L_x^{\tilde{b}'}(|x|^{\tilde{b}'\tilde{\gamma}})) }
\end{align*}
where 
$$\frac{1}{b}=\frac{1+\gamma}{3}, \quad \frac{1}{\tilde{b}}=\frac{\tilde{\gamma}+\varepsilon}{3}
$$ 
for $\tilde{\gamma}<1$ and $\gamma<1/2$. 
By the H\"older inequality again, it leads us to get the desired estimate when $a= \tilde{a}= 2$, and $b, \tilde{b}$ is given as above.
By symmetry, we also obtain the case $(e)$.

It is more delicate to prove the case $(b)$, because we need to get $1/\tilde{b}>1/2$.
For this, we consider a ball $B$ in $\mathbb{R}$ whose center is a origin and radius is $2^{\frac{j}{2}}$ and then decompose the space domain of $G$ into two part, $$G(x, t)=\chi_{B}(|x|)G(x, t)+ \chi_{B^c}(|x|) G(x,t).$$

On $B$, by using H\"older's inequality and then applying \eqref{wd} to the first term, we see that
\begin{align*}
	\langle e^{-is\Delta}F, e^{-it\Delta} \chi_{B} G  \rangle
	&\leq \||x|^{-\tilde{\gamma}} e^{i(t-s)\Delta} F\|_{L_x^{\frac{6}{1+2\tilde{\gamma}}}} \| |x|^{\tilde{\gamma}} \chi_{B} G\|_{L_x^{\frac{6}{5-2\tilde{\gamma}}}}\\
	&\lesssim |t-s|^{-1}\||x|^{\gamma} F\|_{L_x^{\frac{6}{5-2\gamma}}} \| |x|^{\tilde{\gamma}} \chi_{B} G\|_{L_x^{\frac{6}{5-2\tilde{\gamma}}}}.
\end{align*}
Since $\| \chi_{B}\|_{L_x^a}\sim 2^{\frac{3j}{2a}}$ for $0\leq a\leq 1$,
we use H\"older's inequality in $x$-variable for the second term to be bounded as
\begin{equation*}
	\| |x|^{\tilde{\gamma}} \chi_{B} G\|_{L_x^{\frac{6}{5-2\tilde{\gamma}}}} \leq 2^{\frac{3j}{2}(\frac{5-2\tilde{\gamma}}{6}-\frac{1}{\tilde{b}'})} \||x|^{\tilde{\gamma}}G\|_{L_x^{\tilde{b}'}}
\end{equation*}
if $\frac{1}{\tilde{b}'} \leq \frac{5-2\tilde{\gamma}}{6}.$
From this, we make use of H\"older's inequality in $t, s$-variables to get
\begin{align*}
	\left|T_{j, Q}(F, G) \right|& = \left| \int_J \int_I \left\langle e^{-is\Delta} F,\ e^{-it\Delta} G \right\rangle ds\, dt \right| \cr
	&\leq 2^{-j+\frac{3j}{2}(\frac{5-2\tilde{\gamma}}{2n}-\frac{1}{\tilde{b}'})}\int_J \int_I  \||x|^{\gamma} F\|_{L_x^{\frac{6}{5-2\gamma}}} \||x|^{\tilde{\gamma}}G\|_{L_x^{\tilde{b}'}} ds\, dt\\
	&\leq 2^{-j\beta(a, b,\tilde{a},\tilde{b})} \|F\|_{L_t^{a'}(I;L_x^{\frac{6}{5-2\gamma}}(|x|^{\frac{6\gamma}{5-2\gamma}}))}  \|G \|_{L_t^{\tilde{a}'}(J;L_x^{\tilde{b}'}(|x|^{\tilde{b}'\tilde{\gamma}}))}
\end{align*}
for $1\leq a, \tilde{a} \leq \infty$ since $|t|\sim 2^j$.
On the other hand, for $B^c$ we will use the dual estimate of \eqref{1}.
To do so, we first put integral over $s$ into inner product on $L_x^2$ together with H\"older's inequality to see
\begin{align*}
	\left|T_{j, Q}(F, G) \right|
	&\leq  \int_J \left| \left\langle \int_I e^{-is\Delta} F ds ,\ e^{-it\Delta}\chi_{B^c}G \right\rangle_{L_x^2} \right| dt  \cr
	&\leq  \left\|  \int_I  e^{-is\Delta} F(s) ds \right\|_{L_x^2} \int_J \| e^{-it\Delta} \chi_{B^c}  G \|_{L_x^2} dt.
\end{align*}
By using the dual estimate of \eqref{1} with $s=0$, $q=2$ and $r=\frac{6}{1+2\gamma}$ the first term can be bounded as
\begin{align*}
	\left\| \int_I  e^{-is\Delta} F(s) ds \right\|_{L_x^2} \leq \||x|^{\gamma} F \|_{L_t^{2}(I;L_x^{\frac{6}{5-2\gamma}})}
\end{align*}
if $0\leq\gamma<3/b$.
Notice that we apply the Plancherel theorem along with H\"older's inequality to the second term so that
\begin{align*}
	\| e^{-it\Delta}\chi_{B^c}  G \|_{L_x^2} = \left\| \chi_{B^c}G \right\|_{L_x^{2}}
	& \leq \| |x|^{-\tilde{\gamma}}\|_{L_x^{\frac{2\tilde{b}}{2-\tilde{b}}}(B^c)} \left\| |x|^{\tilde{\gamma}} G \right\|_{L_x^{\tilde{b}'}}\cr
	& = 2^{-j\tilde{\gamma}+\frac{3j}{2}(\frac{1}{\tilde{b}}-\frac{1}{2})} \||x|^{\tilde{\gamma}} G \|_{L_x^{\tilde{b}'}}
\end{align*}
since $|x|^{-\tilde{\gamma}} \in L_x^{\frac{2\tilde{b}}{2-\tilde{b}}}(B^c)$ if $1/2< 1/\tilde{b}<1/2+\tilde{\gamma}/3$.
By H\"older's inequality with respect to $t,s$, we obtain
\begin{align*}
	\left|T_{j, Q}(F, G) \right|
	&\leq  2^{-\frac{j\tilde{\gamma}}{2}+\frac{3j}{2}(\frac{1}{\tilde{b}}-\frac{1}{2})} \||x|^{\gamma} F \|_{L_t^{2}(I;L_x^{\frac{6}{5-2\gamma}})} \int_J \||x|^{\tilde{\gamma}} G \|_{L_x^{\tilde{b}'}} dt\\
	& \leq 2^{-j\beta(a, b;\tilde{a}, \tilde{b})} \|F\|_{L_t^{a'}(I;L_x^{\frac{6}{5-2\gamma}}(|x|^{\frac{6\gamma}{5-2\gamma}}))}  \|G \|_{L_t^{\tilde{a}'}(J;L_x^{\tilde{b}'}(|x|^{\tilde{b}'\tilde{\gamma}}))}
\end{align*}
where $1 \leq a \leq 2$, $1\leq \tilde{a}\leq \infty$.
Hence, we obtain the case $(b)$ as desired.
The case $(c)$ is also obtained in the similar way.

\section{Proof of Theorem \ref{thm}}\label{sec3}
This section is devoted to establishing the well-posedness result for \eqref{3DINLS}.
Throughout this section, we use the notation $\beta=4-2\alpha$ as the energy critical index for simplicity.
We first provide some nonlinear estimates to handle the strong singularity on the nonlinear term. 
The following lemma is the inhomogeneous Strichartz estimates which immediately follows from Theorem \ref{thm_inhomo}.

\begin{lem}\label{lem_inhomo St}
 The following inhomogeneous Strichartz estimates hold;
	\begin{equation}\label{10}
		\left\|  \int_0^t e^{i(t-s)\Delta} F(s)\, ds \right\|_{W}\lesssim \| F\|_{N},
	\end{equation}
	\begin{equation}\label{11}
		\left\| \int_0^t e^{i(t-s)\Delta} F(s)\, ds \right\|_{\dot{W}^1}\lesssim \| F\|_{\dot{N}^1},
	\end{equation}
	for
	\[
	\gamma_0< \min\{\frac{1}{2}, \frac{3}{3\beta+5}\}.
	\]
	Also, we have
	\begin{equation}\label{08}
		\left\|  \int_0^t e^{i(t-s)\Delta} F(s)\, ds \right\|_{S_{\gamma}}\lesssim \| F\|_{N}, 
	\end{equation}
	\begin{equation}\label{09}
		\left\|  \int_0^t e^{i(t-s)\Delta} F(s)\, ds \right\|_{\dot{S}_{\gamma}^1}\lesssim \| F\|_{\dot{N}^1}.
	\end{equation}
\end{lem}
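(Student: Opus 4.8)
The plan is to deduce Lemma \ref{lem_inhomo St} directly from the refined inhomogeneous Strichartz estimate of Theorem \ref{thm_inhomo} by verifying, in each of the four cases, that the relevant pairs of exponents lie in the admissible classes $\mathcal{AC}_{\gamma,\sigma}$ and that the structural conditions \eqref{main_cond_1}--\eqref{main_cond_32} are satisfied. The estimates \eqref{10} and \eqref{11} concern the fixed pair $((q_0,r_0),(\tilde q_0,\tilde r_0))$ entering the definitions of $W,\dot W^1,N,\dot N^1$, while \eqref{08} and \eqref{09} require the estimate for \emph{every} $(q,r)\in\mathcal{AD}_\gamma$ in the supremum defining $S_\gamma$, paired against the same $(\tilde q_0,\tilde r_0)$; since $\mathcal{AD}_\gamma\subseteq\mathcal{AC}_{\gamma,0}$, all of these are covered by Theorem \ref{thm_inhomo} once the pairing conditions are checked. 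Note also that \eqref{11} and \eqref{09} follow from \eqref{10} and \eqref{08} applied to $\nabla F$ after commuting $\nabla$ past the Schr\"odinger propagator, since $\nabla$ commutes with $e^{it\Delta}$, so it suffices to treat the $W$ and $S_\gamma$ statements.

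First I would record the data: $(q_0,r_0)$ is defined by $1/q_0=\tfrac{3}{2(17-6\alpha)}+\tfrac{3\delta}{2}$, $1/r_0=\tfrac12-\tfrac{1}{17-6\alpha}+\tfrac{\gamma_0}{3}-\delta$, and the text already asserts $(q_0,r_0)\in\mathcal{AD}_{\gamma_0}\subseteq\mathcal{AC}_{\gamma_0,0}$; likewise $(\tilde q_0,\tilde r_0)$ with $1/\tilde q_0=\tfrac12+\tfrac{1}{17-6\alpha}-\tfrac{3\delta(5-2\alpha)}{2}$, $1/\tilde r_0=\tfrac16-\tfrac{2}{3(17-6\alpha)}+\tfrac{\tilde\gamma_0}{3}+\delta(5-2\alpha)$, with $(\tilde q_0,\tilde r_0)\in\mathcal{AC}_{\tilde\gamma_0,0}$. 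Writing $\beta=4-2\alpha$ and using $17-6\alpha=3\beta+5$, one computes $\tfrac1{r_0}-\tfrac{\gamma_0}{3}=\tfrac12-\tfrac1{3\beta+5}-\delta$ and $\tfrac1{\tilde r_0}-\tfrac{\tilde\gamma_0}{3}=\tfrac16-\tfrac{2}{3(3\beta+5)}+\delta(\beta+1)$. Since these are both positive and their difference has absolute value less than $\tfrac13$ (for $\delta$ small and $3/2\le\alpha<11/6$, i.e. $\tfrac13<\beta\le1$), the second condition in \eqref{main_cond_1} holds; and $1/q_0+1/\tilde q_0=\tfrac12+\tfrac3{2(3\beta+5)}+\tfrac{3\delta}{2}-\tfrac{3\delta(\beta+1)}{2}+\cdots<1$ for small $\delta$, giving the first. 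One then checks the scaling identity \eqref{sc} (equivalently the $\sigma=0$ normalization in each class), which holds by the arithmetic built into the exponents, and determines which of the three regimes in Theorem \ref{thm_inhomo} applies: here $\tfrac1{r_0}-\tfrac{\gamma_0}3>\tfrac1{\tilde r_0}-\tfrac{\tilde\gamma_0}3$, so I would verify \eqref{main_cond_31}--\eqref{main_cond_32}, using $\gamma_0<1/2$ (which is part of the hypothesis $\gamma_0<\min\{1/2,\,3/(3\beta+5)\}$) for \eqref{main_cond_32} and the explicit value of $\tfrac1{\tilde r_0}-\tfrac{\tilde\gamma_0}3$ against the affine bound in \eqref{main_cond_31}. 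The role of the extra constraint $\gamma_0<3/(3\beta+5)$ is precisely to keep $1/r_0<1/2+\gamma_0/3$, i.e. to keep $(q_0,r_0)$ inside $\mathcal{AC}_{\gamma_0,0}$.

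For \eqref{08} (and hence \eqref{09}), the only change is that $(q,r)$ ranges over $\mathcal{AD}_\gamma$ subject to \eqref{cond_qr}, paired with the fixed $(\tilde q_0,\tilde r_0)$. The window \eqref{cond_qr} is designed so that $\tfrac1r-\tfrac\gamma3$ lies in an interval whose endpoints are exactly $\tfrac1{\tilde r_0}-\tfrac{\tilde\gamma_0}3$ (lower) and $\tfrac12-\tfrac\gamma3\cdot\!\big(\text{correction}\big)$-type upper bound matching \eqref{main_cond_1} and \eqref{main_cond_31}; I would check that for every such $(q,r)$ one still has $1/q+1/\tilde q_0<1$ (guaranteed since $1/q\le1/2$ on $\mathcal{AD}_\gamma$ and $1/\tilde q_0<1/2+O(\delta)$... more carefully, $1/q=\tfrac32(\tfrac12-\tfrac1r)+\tfrac\gamma2$ is bounded above using the lower bound on $1/r$ from \eqref{cond_qr}), the $\tfrac13$-separation condition in \eqref{main_cond_1}, and whichever of the three regimes applies as $1/r$ varies across the window, invoking $\gamma<1/2$ and $\tilde\gamma_0<1/2$ as needed. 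The main obstacle — really the only non-bookkeeping point — is confirming that the \emph{full} window \eqref{cond_qr} is admissible, including its two boundary behaviors: near the lower end $\tfrac1r-\tfrac\gamma3\to\tfrac1{\tilde r_0}-\tfrac{\tilde\gamma_0}3$ one is in the diagonal/near-diagonal regime and must use \eqref{main_cond_11}, while near the upper end one is in the regime $\tfrac1r-\tfrac\gamma3>\tfrac1{\tilde r_0}-\tfrac{\tilde\gamma_0}3$ and the affine constraint \eqref{main_cond_31} must be verified to be non-vacuous; both reduce, after substituting $17-6\alpha=3\beta+5$, to elementary linear inequalities in $\delta$ that hold for $\delta$ sufficiently small and all $\alpha\in[3/2,11/6)$. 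I would present these verifications compactly, noting that the endpoint restrictions on $\gamma_0$ in the statement are exactly what make every inequality strict.
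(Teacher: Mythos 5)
Your overall strategy matches the paper's: reduce everything to Theorem \ref{thm_inhomo} by checking that $(q_0,r_0)\in\mathcal{AD}_{\gamma_0}\subset\mathcal{AC}_{\gamma_0}$, $(\tilde q_0,\tilde r_0)\in\mathcal{AC}_{\tilde\gamma_0}$, and that the pairing conditions hold, then run the same check for $(q,r)\in\mathcal{AD}_\gamma$ in the window \eqref{cond_qr} against the fixed $(\tilde q_0,\tilde r_0)$. The treatment of the fixed pair (for \eqref{10}--\eqref{11}) is essentially the paper's argument.

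There is, however, one genuine error in your description of \eqref{08}--\eqref{09}. You claim that near the lower end of the window \eqref{cond_qr} one has $\frac1r-\frac\gamma3\to\frac1{\tilde r_0}-\frac{\tilde\gamma_0}3$ and thus enters the diagonal regime, requiring \eqref{main_cond_11}. This is not the case. The lower bound in \eqref{cond_qr} gives $\frac1r-\frac\gamma3>\frac16+\frac{2}{3(3\beta+5)}-\delta(\beta+1)$, while $\frac1{\tilde r_0}-\frac{\tilde\gamma_0}3=\frac16-\frac{2}{3(3\beta+5)}+\delta(\beta+1)$; their difference is at least $\frac{4}{3(3\beta+5)}-2\delta(\beta+1)$, which stays bounded away from zero for $\delta$ small. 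The paper isolates exactly this fact as observation \eqref{cond_4}: $\frac1{\tilde r_0}<\frac16+\frac{\tilde\gamma_0}3$, which forces $(1/r,1/\tilde r_0)$ to lie strictly inside the triangle $B'C'D'$ (the lower-side regime) for \emph{every} $(q,r)\in\mathcal{AD}_\gamma$, since on $\mathcal{AD}_\gamma$ one always has $\frac1r-\frac\gamma3\geq\frac16$. Consequently only \eqref{main_cond_31}--\eqref{main_cond_32} need checking, and \eqref{main_cond_31} is automatic; the role of the lower endpoint of \eqref{cond_qr} is to keep $1/q+1/\tilde q_0<1$, not to avoid the diagonal. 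You should drop the near-diagonal discussion and instead explain why the lower-side regime is the \emph{only} one that occurs, which is what streamlines the verification in the paper.
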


\begin{proof}
	We first show \eqref{10} and \eqref{11}. 
	For this, it is sufficient to check that $(q_0, r_0)$ and $(\ti{q}_0, \ti{r}_0)$ satisfy all requirements given as in Theorem \ref{thm_inhomo}. 
	Let us consider
	\[
\begin{aligned}
	&\lt( \frac{1}{q_0},\ \frac{1}{r_0}\rt) = \lt( \frac{3}{2(3\beta+5)}+\frac{3\delta}{2},\ \frac{1}{2}-\frac{1}{3\beta+5}+\frac{\gamma_0}{3}-\delta\rt),\\
	\lt( \frac{1}{\ti{q}_0},\ \frac{1}{\ti{r}_0}\rt)& = \lt( \frac{1}{2}+\frac{1}{3\beta+5}-\frac{3\delta(\beta+1)}{2},\ \frac{1}{6}-\frac{2}{3(3\beta+5)}+\frac{\tilde{\gamma}_0}{3}+\delta(\beta+1)\rt),
\end{aligned}
\]
with $\beta=4-2\alpha$.	
Then we have $(q_0, r_0)\in \mathcal{AD}_{\gamma_0} \subset \mathcal{AC}_{\gamma_0}$ for $$\delta\leq \frac{1}{3}-\frac{1}{3\beta+5} \qquad \text{and}\qquad \gamma_0\leq \frac{3}{3\beta+5},$$ and 
$(\ti{q}_0, \ti{r}_0)\in \mathcal{AC}_{\ti{\gamma}_0}$ for $$\delta(\beta+1)<\frac{1}{3}+\frac{2}{3(3\beta+5)}.$$	
It is not difficult to see that \eqref{main_cond_1} is true. 
Thus these all conditions above are true by taking $\delta>0$ sufficiently small.
On the other hand, for small $\delta>0$ we observe that 
\bq\label{cond_4}
\frac{1}{\ti{r}_0}<\frac{1}{6}+\frac{\ti{\gamma}_0}{3}
\eq
and so $(1/r_0, 1/\tilde{r}_0)$ lies in the triangle $B'C'D'$ in Figure \ref{pic}.
Thus we get  
\[
\frac{1}{r_0}-\frac{\gamma_0}{3}>\frac{1}{\ti{r}_0}-\frac{\ti{\gamma}_0}{3}
\]
and so we only need to check the conditions \eqref{main_cond_31} and \eqref{main_cond_32}.
For \eqref{main_cond_31}, it is trivial, see Figure \ref{pic}.
Also, \eqref{main_cond_32} is valid with $\gamma_0<1/2$.
	
	Next we prove \eqref{08} and \eqref{09}.
	We note that $(q, r)\in \mathcal{AD}_{\gamma}$ and $(\tilde{q}_0, \tilde{r}_0)\in \mathcal{AC}_{\tilde{\gamma}_0}$ for sufficient small $\delta>0$.
	The first and second ones of \eqref{main_cond_1} is satisfied if
	\[
	\frac{1}{q}<\frac{1}{2}-\frac{1}{3\beta+5}+\frac{3\delta(\beta+1)}{2}, \qquad \frac{1}{r}<\frac{1}{2}-\frac{2}{3(3\beta+5)}+\frac{\gamma}{3}+\delta(\beta+1)
	\]
	thanks to \eqref{cond_4}.
	By the scaling condition, it follows that
	\bq\label{con1}
	\frac{1}{6}+\frac{\gamma}{3}+\frac{2}{3(3\beta+5)}-\delta(\beta+1)<\frac{1}{r} <\frac{1}{2}+\frac{\gamma}{3}-\frac{2}{3(3\beta+5)}+\delta(\beta+1).
	\eq
	Due to \eqref{cond_4}, the condition \eqref{main_cond_31} is automatically true. 
	For \eqref{main_cond_32} to hold, it is required that 
	\bq\label{con2}
	\frac{1}{r}<\frac{1}{2}+\frac{\gamma}{3}-\frac{1}{3\beta+5}+\frac{3\delta(\beta+1)}{2}\quad \text{and} \quad \gamma<\frac{1}{2}.
	\eq
	Since the upper bound of \eqref{con1} can be removed by \eqref{con2}, 
	thus, \eqref{con1} and \eqref{con2} are reduced to \eqref{cond_qr} with $\gamma<1/2$
	and sufficiently small $\delta$.
	Hence, the proof is done.
\end{proof}

Next, we obtain some estimates of the nonlinear term $F(u)=|x|^{-\alpha} |u|^{\beta}u$ where $\beta=4-2\alpha$ for $\alpha\ge 3/2$.

\begin{lem}\label{lem_nonlinear}
	Let $3/2\leq \alpha<11/6$.
	Assume that
	\[
	\begin{aligned}
		-\gamma_0 -1 \leq \frac{\tilde{\gamma}_0-\alpha+\gamma_0}{\beta} \leq -\gamma_0.\\
	\end{aligned}
	\]
	Then the following estimates hold;
	\begin{equation}\label{3}
		\|F(x, u)\|_{\dot{N}^1} \lesssim \|u\|_{\dot{W}^{1}}^{\beta+1}, 
	\end{equation}
	\begin{equation}\label{4}
		\|F(x, u)\|_{N} \lesssim \|u\|_{\dot{W}^{1}}^{\beta} \|u\|_{W},
	\end{equation}
	\begin{equation*}\label{5}
	\|F(x, u)\|_{\dot{N}^1} \lesssim \|u\|_{\dot{W}^{1}}^{\beta}  \|u\|_{\dot{S}_{\gamma}^{1}},
\end{equation*}
\begin{equation*}\label{6}
	\|F(x, u)\|_{N} \lesssim \|u\|_{\dot{W}^{1}}^{\beta} \|u\|_{S_{\gamma}}.
\end{equation*}
\end{lem}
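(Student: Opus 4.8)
The plan is to estimate the nonlinearity $F(x,u)=|x|^{-\alpha}|u|^{\beta}u$ (and its gradient) pointwise, then distribute derivatives and $x$-weights by H\"older's inequality in the weighted Lebesgue spaces, and finally absorb the remaining $|x|$-powers via the weighted Sobolev (Hardy--Sobolev) inequality
\[
\bigl\| |x|^{-\mu} g \bigr\|_{L^p} \lesssim \|\nabla g\|_{L^q}, \qquad \tfrac{1}{q}=\tfrac{1}{p}+\tfrac{1-\mu}{3}, \quad 0\le\mu<1,
\]
applied to $g=u$ or to an appropriate component. For \eqref{3} I would begin from the Leibniz-type bound
\[
\bigl|\nabla F(x,u)\bigr| \lesssim |x|^{-\alpha}|u|^{\beta}|\nabla u| + |x|^{-\alpha-1}|u|^{\beta+1},
\]
which is the standard estimate for the derivative of $|x|^{-\alpha}|u|^{\beta}u$ when $\beta=4-2\alpha>0$ (valid since here $\beta\ge 7/3>1$, so $|u|^{\beta}u$ is $C^1$). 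The $\dot N^1$-norm is $\||x|^{\tilde\gamma_0}\nabla F\|_{L_t^{\tilde q_0'}L_x^{\tilde r_0'}}$, and I would split the two terms.

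For the first term, $|x|^{\tilde\gamma_0-\alpha}|u|^{\beta}|\nabla u|$, write $|x|^{\tilde\gamma_0-\alpha}=|x|^{\beta\kappa}\cdot|x|^{-\gamma_0}$ where $\kappa:=(\tilde\gamma_0-\alpha+\gamma_0)/\beta$, distribute as $\beta$ factors of $|x|^{\kappa}|u|$ and one factor of $|x|^{-\gamma_0}\nabla u$, and apply H\"older in $x$ (and in $t$, matching the exponents $1/\tilde q_0'=(\beta+1)/q_0$ and $1/\tilde r_0'=(\beta+1)/r_0$, which one checks holds for the chosen perturbed exponents). Each factor $\||x|^{\kappa}u\|_{L^{?}}$ is then bounded by $\|\nabla u\|_{L_x^{r_0}(|x|^{-r_0\gamma_0})}$ using the weighted Sobolev inequality, provided $-1<\kappa$, i.e. the left hypothesis $-\gamma_0-1\le\kappa$. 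The borderline factor $|x|^{-\gamma_0}\nabla u$ is already in the right space, which is where the right hypothesis $\kappa\le-\gamma_0$ is used (so that the admissibility $\gamma_0/3<1/r_0$ and the Sobolev relations close consistently). The second term $|x|^{\tilde\gamma_0-\alpha-1}|u|^{\beta+1}$ is handled identically, now distributing as $\beta+1$ equal factors of $|x|^{\kappa'}|u|$ with $\kappa'=(\tilde\gamma_0-\alpha-1+\gamma_0)/(\beta+1)\cdot$(scaling bookkeeping); again $-1<\kappa'$ reduces exactly to the stated hypothesis. The estimates \eqref{4} and the two displayed unnumbered estimates follow by the same H\"older splitting but peeling off one factor of $u$ into the $W$-, $\dot S^1_\gamma$- or $S_\gamma$-norm instead of a $\dot W^1$-factor, using \eqref{relation_WS} where convenient; the scaling identity \eqref{sc_1} for $(q_0,r_0)\in\mathcal{AD}_{\gamma_0}$ and $(\tilde q_0,\tilde r_0)\in\mathcal{AC}_{\tilde\gamma_0}$ guarantees the H\"older exponents balance.

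The main obstacle is verifying that the bookkeeping of weights and Lebesgue exponents is simultaneously consistent: one must check that the power of $|x|$ assigned to each of the $\beta$ (or $\beta+1$) copies of $u$ lies in the open range $(-1,\infty)$ demanded by the weighted Sobolev inequality \emph{and} that the resulting target exponent on each copy is precisely $r_0$ with weight $|x|^{-r_0\gamma_0}$, \emph{and} that the time exponents add up correctly — all at once. This is exactly what forces the two-sided hypothesis on $(\tilde\gamma_0-\alpha+\gamma_0)/\beta$, and it is also where the restriction $\alpha<11/6$ (equivalently $\beta>7/3$) enters, since one needs the admissible window \eqref{con1}--\eqref{con2} for $(q,r)$ and the analogous window for $(q_0,r_0)$, $(\tilde q_0,\tilde r_0)$ to be nonempty for small $\delta>0$. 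I would carry out this verification once, in detail, for \eqref{3}, and then note that \eqref{4} and the remaining two estimates are obtained by the identical computation with one $u$-factor relocated to the weaker ($W$, $S_\gamma$, or $\dot S^1_\gamma$) norm.
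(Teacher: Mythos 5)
Your plan---Leibniz expansion of $\nabla F$, H\"older in $t,x$ distributing $\beta$ copies of a weighted $u$-factor and one weighted $\nabla u$-factor, then the weighted (Stein--Weiss) Sobolev inequality applied to each $u$-copy---is exactly the method the paper uses, and the two-sided hypothesis on $\kappa:=(\tilde\gamma_0-\alpha+\gamma_0)/\beta$ and the restriction $\alpha<11/6$ enter essentially as you anticipate. Two bookkeeping slips should be corrected, though. First, with $\beta=4-2\alpha$ and $3/2\le\alpha<11/6$ one has $\beta\in(1/3,\,1]$, not $\beta\ge 7/3$; the pointwise Leibniz bound $|\nabla(|u|^{\beta}u)|\lesssim|u|^{\beta}|\nabla u|$ is still valid since all one needs is $\beta>0$, but your stated justification is wrong, and in fact the \emph{lower} bound $\beta>1/3$ is the real content of $\alpha<11/6$ in the paper (it is what makes $(\gamma_0+1)/3<1/r_0$ possible for small $\delta$). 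Second, the $x$-H\"older balance is not $1/\tilde r_0'=(\beta+1)/r_0$; it is $1/\tilde r_0'=\beta/p+1/r_0$ where $L^p$ is the target of the weighted Sobolev embedding $\||x|^{\kappa}u\|_{L^p}\lesssim\||x|^{-\gamma_0}\nabla u\|_{L^{r_0}}$, with $1/p=1/r_0-\gamma_0/3-1/3-\kappa/3$ in the paper. Your subsequent appeal to the Sobolev inequality is what actually closes the exponent count, so the plan is sound, but as written the parenthetical contradicts it. Finally, note the paper obtains the last two estimates immediately from \eqref{3}, \eqref{4} together with $\|\cdot\|_{W}\lesssim\|\cdot\|_{S_\gamma}$ and $\|\cdot\|_{\dot W^1}\lesssim\|\cdot\|_{\dot S^1_\gamma}$, with no additional computation; the norms $S_\gamma,\dot S^1_\gamma$ are the \emph{stronger} ones, so the deduction is just monotonicity.
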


\begin{proof}
Thanks to \eqref{relation_WS}, we only need to show \eqref{3} and \eqref{4}. To show \eqref{3}, one has
	\[
	\begin{aligned}
		\||x|^{\ti{\gamma}_0} \nabla (|x|^{-\alpha}|u|^{\beta}u)\|_{L_t^{\ti{q}_0'}L_x^{\ti{r}_0'}} 
		&\lesssim \||x|^{\ti{\gamma}_0-\alpha-1} |u|^{\beta}u\|_{L_t^{\ti{q}_0'} L_x^{\ti{r}_0'}}\\ 
		&\quad +\||x|^{\ti{\gamma}_0-\alpha} |u|^{\beta-1}u\nabla u\|_{L_t^{\ti{q}_0'} L_x^{\ti{r}_0'}}\\
		&\quad +\||x|^{\ti{\gamma}_0-\alpha} |u|^{\beta}\nabla u\|_{L_t^{\ti{q}_0'} L_x^{\ti{r}_0'}}\\
		& =: I+II+III
	\end{aligned}
	\]
	from the direct calculation.
	By applying the weighted Sobolev embedding (see e.g. \cite{SW});
	\begin{equation}\label{embedding}
		\big\||x|^{b}f\big\|_{L^q} \lesssim \big\||x|^{a} \nabla f\big\|_{L^p}
	\end{equation}
	for $1\leq p\leq q<\infty$, $-3/q<b\leq a$ and $a-b-1=3/q-3/p$, 
	it is easy to see 
	\[
	\begin{aligned}	
		II, III &\lesssim \||x|^{\frac{\ti{\gamma}_0-\alpha+\gamma_0}{\beta}} u\|_{L_t^{q_0}L_x^{p}}^{\beta} \||x|^{-\gamma_0} \nabla u\|_{L_t^{q_{0}}L_x^{r_{0}}}\\
		&\lesssim \||x|^{-\gamma_0} \nabla u\|_{L_t^{q_0}L_x^{r_0}}^{\beta+1} \\
		\end{aligned}
	\]
	with
	\[
	\begin{aligned}
		&\frac{1}{p}= \frac{1}{r_0}-\frac{\gamma_0}{3}-\frac{1}{3}-\frac{\ti{\gamma}_0-\alpha+\gamma_0}{3\beta}
	\end{aligned}
	\]
	if 
	\[
	-\gamma_0 -1+\sigma \leq \frac{\tilde{\gamma}_0-\alpha+\gamma_0}{\beta} \leq -\gamma_0, \qquad \frac{\gamma_0+1}{3}<\frac{1}{r_0}
	\]
where the second condition above is valid under $1/3<\beta$, equivalent to $\alpha<11/6$.
	Also, from \eqref{embedding} the term $I$ can be estimated as
	\[
	\begin{aligned}
	I	
	&\lesssim \||x|^{\frac{\ti{\gamma}_0-\alpha+\gamma_0}{\beta}} u\|_{L_t^{q_{0}}L_x^{p}}^{\beta} \||x|^{-\gamma_{0}-1} u\|_{L_t^{q_{0}}L_x^{r_{0}}}\\
	&\lesssim \||x|^{-\gamma_{0}} \nabla u\|_{L_t^{q_0}L_x^{r_0}}^{\beta+1}  
	\end{aligned}
	\]
	under the same condition to get $II$ and $III$.
	The inequality \eqref{4} is obviously obtained in the quite similar way to get \eqref{3}.
	We omit the detail and so the proof is done.
\end{proof}

Now we start with the proof of Theorem \ref{thm} via an picard iteration argument which gives useful information for the perturbation theory.
Let us consider a sequence $\{u^{(m)}\}_{m=0}^{\infty}$ with $u^{(0)}(t):=0$ satisfying
\begin{equation*}
	\left\{
	\begin{aligned}
		&i \partial_{t} u^{(m)} + \Delta u^{(m)} = \lambda F(x, u^{(m-1)}), \\
		&u^{(m)}(x, 0)=u_0
	\end{aligned}
	\right.
\end{equation*}
for  $m \ge1$.
By Duhamel's principle, it is written as 
\begin{equation*}
	u^{(m)}=e^{it\Delta} u_0 -i\int_0^t e^{i(t-s)\Delta} F(x, u^{(m-1)}) ds.
\end{equation*}
We first show $\{u^{(m)}\}_{m=0}^{\infty}$ is cauchy sequence in $S_{\gamma}$.
Using \eqref{11} and Lemma \ref{lem_nonlinear}, we see that
\begin{align*}
	\|u^{(m)}\|_{\dot{W}^{1}}
	&\lesssim \|e^{it\Delta} u_0\|_{\dot{W}^{1}}+\|F(x, u^{(m-1)})\|_{\dot{N}^{1}}\\
	&\lesssim \eta+\|u^{(m-1)}\|_{\dot{W}^{1}}^{\beta+1}
\end{align*}
by \eqref{as}.
Then by an induction argument, there is $\eta<1$ sufficiently small so that
\bq\label{sm}
	\|u^{(m)}\|_{\dot{W}^{1}} \lesssim \eta, \qquad \text{for}\quad m\ge0
\eq
We apply \eqref{08}, \eqref{4} and \eqref{sm} in turn. Then it follows that 
\begin{align*}
	\|u^{(m)}\|_{S_{\gamma}}
	&\lesssim \|u_0\|_{L^2}+\|F(x, u^{(m-1)})\|_N \nonumber\\
	&\lesssim \|u_0\|_{L^2}+\|u^{(m-1)} \|_{\dot{W}^{1}}^{\beta} \|u^{(m-1)}\|_{S_{\gamma}} \nonumber\\
	&\lesssim \|u_0\|_{L^2}+\eta^{\beta} \|u^{(m-1)}\|_{S_{\gamma}}.
\end{align*}
Thus, we deduce from the induction argument that
\begin{equation*}
	\|u^{(m)}\|_{S_{\gamma}} \lesssim \|u_0\|_{L^2},
\end{equation*}
for sufficiently small $\eta<1$. Thus we have $u^{(m)} \in S_{\gamma}$ for $m\ge0$.

On the other hands, by \eqref{08}, \eqref{4} and \eqref{sm} one has
\begin{align}
	\|u^{(m+1)}-u^{(m)}\|_{S_{\gamma}}
	&\lesssim \| (F(x, u^{(m)})-F(x, u^{(m-1)}))\|_{N}\nonumber\\
	&\lesssim (\|u^{(m)}\|_{\dot{W}^{1}}^{\beta}+\|u^{(m-1)}\|_{\dot{W}^{1}}^{\beta}) \|u^{(m)}-u^{(m-1)}\|_{S_{\gamma}} \nonumber\\
	& \lesssim \eta^{\beta} \|u^{(m)}-u^{(m-1)}\|_{S_{\gamma}}. \label{sm3}
\end{align}
Then the small constant $\eta<1$ grantees that the sequence $\{u^{(m)}\}_{m=0}^{\infty}$ is the Cauchy in $S_{\gamma}$. Since $S_{\gamma}$ is a complete space, there exist a solution $u \in S_{\gamma}$ such that $u^{(m)}$ strongly converge to $u$ in $S_{\gamma}$ and so we obtain \eqref{w3}.
Thanks to \eqref{sm}, there is a subsequence $u^{(m_k)}$ weakly convergent to $u$ in  $\dot{W}^1$ and
$$ \|u\|_{\dot{W}^{1}} \leq \liminf_{k\rightarrow \infty} \|u^{(m_k)}\|_{\dot{W}^{1}} \lesssim \eta.$$
It leads us to get \eqref{w1}. % and \eqref{w11}.

Meanwhile,
by Lemma \ref{lem_inhomo St}, Lemma \ref{lem_nonlinear}, \eqref{w1} and \eqref{w3} we obtain
\begin{align*}\label{h2}
	\|u\|_{H^1}
	&\lesssim \|u_0\|_{H^1}+ \|F(x, u)\|_{N}+\|F(x,  u)\|_{\dot{N}^1}\\
	&\lesssim \|u_0\|_{H^1}+ \|u\|_{\dot{W}^{1}}^{\beta}\|u\|_{S_{\gamma}}+\|u\|_{\dot{W}^{1}}^{\beta}\|u\|_{\dot{W}^1} \\
	&\lesssim \|u_0\|_{H^1}+ \eta^{\beta}\|u_0\|_{L^2}+\eta^{\beta+1} <\infty
\end{align*}
and similarly 
\begin{equation*}
	\|u\|_{\dot{S}_{\gamma}^{1}} \lesssim \|u_0\|_{\dot{H}^{1}}+\eta^{\beta+1}.
\end{equation*}
Therefore, we conclude that $u \in C_t H^1 \cap \dot{S}^1_{\gamma}$ satisfying \eqref{w2}.

It remains to check the all restrictions arising from Lemma \ref{lem_inhomo St} and Lemma \ref{lem_nonlinear}.
From Lemma \ref{lem_inhomo St}, one has the restrictions 
\begin{align}
	&\frac{3}{2}\leq \alpha<\frac{11}{6},\qquad 0<\gamma_0<\min\{ \frac{1}{2}, \frac{3}{3\beta+5}\}, \qquad 0<\ti{\gamma}_0<1 \label{c1}
\end{align}
and from Lemma \ref{lem_nonlinear} we also need the condition
\begin{align}
&-\gamma_0 (\beta+1) -\beta +\alpha\leq \ti{\gamma}_0 \leq -\gamma_0 (\beta+1)+\alpha. \label{c5}
\end{align}
Then there exist $\ti{\gamma}_0$ whenever
\bq\label{c6}
-1+\frac{\alpha}{\beta+1}<\gamma_0 <\frac{\alpha}{\beta+1}.
\eq
by comparing the last condition of \eqref{c1} and \eqref{c5}.
Similarly, to exist $\gamma_0$ we make the lower bounds of $\gamma_1$ in the second condition of \eqref{c1} and \eqref{c6} less than the upper ones and so it follows that
\[
\begin{aligned}
	\alpha<\frac{15}{8}, \qquad -(3\beta-2)(3\beta+5)<6(\beta+1).
\end{aligned}
\]
We note here that these two conditions above are redundant for $3/2\leq \alpha<11/6$ due to $\beta=4-2\alpha$.

Finally, we prove the continuous dependence \eqref{co}. Let us consider $u$ and $\tilde{u}$ is a solution with respect to a initial data $u_0$ and $\tilde{u}_0$, respectively, satisfying \eqref{as}.
In the similar way to show \eqref{sm3}, it is easy to show
\begin{align*}
	\|u-\tilde{u}\|_{S_{\gamma}}
	&\lesssim \|u_0 -\tilde{u}_0\|_{L^2} +\|F(x, u)-F(x, \tilde{u})\|_{N}\\
	&\lesssim \|u_0-\tilde{u}_0 \|_{L^2} +(\|u\|^{\beta}_{\dot{X}^{1-\sigma}}+\|\tilde{u}\|^{\beta}_{\dot{X}^{1-\sigma}})\|u-\tilde{u}\|_{S_{\gamma}}\\
	&\lesssim \|u_0-\tilde{u}_0 \|_{L^2} + \eta^{\beta} \|u-v\|_{S_{\gamma}}.
\end{align*}
Since $\eta<1$ is small enough, we conclude that \eqref{co}.

\section{Necessary conditions}\label{sec4}
In this section, we discuss the sharpness of Theorem \ref{thm}.
Following \cite{V2, F}, 
we investigate some necessary conditions for the weighted inhomogeneous estimate \eqref{inhomo}.
\begin{prop}\label{prop}
Let $n\ge 3$.
Suppose that \eqref{inhomo} holds,
	then $q, r, \tilde{q}, \tilde{r}$ must satisfy the following conditions %for $0 \leq \gamma+\tilde{\gamma} \leq 2$
	\begin{equation}\label{n0}
		\frac{1}{q}+\frac{1}{\tilde{q}}=\frac{n}{2}(1-\frac{1}{r}-\frac{1}{\tilde{r}})+\frac{\gamma+\tilde{\gamma}}{2},
	\end{equation}
	\begin{equation}\label{n1}
		\frac{\gamma}{n}<\frac{1}{r} < \frac{1}{2}+\frac{\gamma}{n}, \quad \frac{\tilde{\gamma}}{n}<\frac{1}{\tilde{r}}<\frac{1}{2}+\frac{\tilde{\gamma}}{n},
		\quad \frac{1}{q}+\frac{1}{\tilde{q}}\leq 1,
	\end{equation}
	\begin{equation}\label{n2}
		\frac{1}{q}< n(\frac{1}{2}-\frac{1}{r})+\gamma, \quad \frac{1}{\tilde{q}}<n(\frac{1}{2}-\frac{1}{r})+\tilde{\gamma}.
	\end{equation}
	\begin{equation}\label{n5}
		\left| (\frac{1}{r}-\frac{\gamma}{n})-(\frac{1}{\tilde{r}}-\frac{\tilde{\gamma}}{n})\right| \leq \frac{1}{n}.
	\end{equation}
\end{prop}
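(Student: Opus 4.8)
The plan is to derive each of the four conditions in Proposition \ref{prop} by testing the inhomogeneous estimate \eqref{inhomo} against carefully chosen inputs $F$, following the scheme of \cite{V2, F} but keeping track of the weights $|x|^{\pm\gamma}$, $|x|^{\pm\tilde\gamma}$. The scaling identity \eqref{n0} comes first: for $\delta>0$, apply \eqref{inhomo} to $F_\delta(s,y)=F(\delta^2 s,\delta y)$, use the dilation behaviour $e^{i(t-s)\Delta}$ commutes with the rescaling up to the substitution $(t,s)\mapsto(\delta^2 t,\delta^2 s)$, and compute how each weighted norm scales: the left side picks up $\delta^{-2/q-n/r+r\gamma/r}=\delta^{-2/q-n/r+\gamma}$ type powers and the right side the analogous $\tilde q,\tilde r,\tilde\gamma$ powers. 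Letting $\delta\to 0$ and $\delta\to\infty$ forces the exponents to match, which is exactly \eqref{n0}. (It is cleanest to phrase this via the equivalent bilinear form \eqref{d2}, so both $F$ and $G$ get rescaled symmetrically.)

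Next I would establish the endpoint-type restrictions \eqref{n1} and \eqref{n2}. The time-translation invariance of the operator $\int_0^t e^{i(t-s)\Delta}F\,ds$ under restriction to an interval — more precisely, that $\chi_{[0,T]}$-truncations are uniformly bounded — gives $1/q+1/\tilde q\le 1$ in the standard way: take $F$ a tensor product supported on a long time interval, so the left side grows like $T^{1/q}$ while the right grows like $T^{1/\tilde q'}=T^{1-1/\tilde q}$, and boundedness demands $1/q\le 1-1/\tilde q$. For the spatial-integrability bounds $\gamma/n<1/r<1/2+\gamma/n$ (and the tilde analogue), the natural test is to probe near the spatial origin: take $F$ concentrated on a ball $B(0,\rho)$ and examine whether $|x|^{-\gamma}$ is locally integrable against $L^r$, which needs $r\gamma<n$, i.e. $\gamma/n<1/r$; the upper bound comes from the dual side / from requiring $|x|^{\gamma}\in L^{r'}_{loc}$ combined with the $TT^*$ structure, or equivalently by testing $G$ the same way. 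The conditions \eqref{n2} are the weighted analogues of the necessary conditions for the homogeneous estimate \eqref{1} in Theorem \ref{thm_homo} (note $\sigma=n(1/2-1/r)-2/q+\gamma$ must satisfy $\sigma<n/2$ there, which rearranges to $1/q<n(1/2-1/r)+\gamma$ up to the admissibility range): one extracts them by plugging a Knapp-type wave packet for $F$ — a modulated bump adapted to a tube of dimensions $\lambda^{-1}\times\cdots\times\lambda^{-1}\times \lambda^{-2}$ in spacetime — and comparing both sides as $\lambda\to\infty$.

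The final and most delicate condition is \eqref{n5}, the constraint $|(1/r-\gamma/n)-(1/\tilde r-\tilde\gamma/n)|\le 1/n$. This is the weighted version of the ``$|1/r-1/\tilde r|\le 1/n$'' obstruction familiar from the non-weighted inhomogeneous Strichartz theory, and it should be derived from the sharp time-decay of the free propagator: $e^{i(t-s)\Delta}$ maps $L^{b'}(|x|^{b'\gamma})$ to $L^{b}(|x|^{-b\gamma})$ with a factor $|t-s|^{\frac n2(1/b-1/b')-(\gamma+\tilde\gamma)/2}$ only when the exponents are balanced, and mismatched weighted exponents make the relevant kernel estimate fail. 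Concretely I would take $F$ a single bump at spatial scale $1$ localized at time $s\approx 0$, so that $e^{i(t-s)\Delta}F(t,\cdot)$ for $|t|\sim T$ large behaves like $T^{-n/2}$ times a function spread over scale $T$; evaluating the weighted $L^r_x$-norm of this against $|x|^{-r\gamma}$ on $|x|\sim T$ gives $T^{-n/2}T^{n/r}T^{-\gamma}=T^{-n/2+n/r-\gamma}$, and pairing (via \eqref{d2}) with an analogous bump $G$ at time $t\sim T$ yields an integral $\int^T |t-s|^{\cdots}\,dt$ whose convergence/divergence as $T\to\infty$ pins down precisely $|(1/r-\gamma/n)-(1/\tilde r-\tilde\gamma/n)|\le 1/n$. \textbf{The main obstacle} I anticipate is bookkeeping in this last step: one must choose the spacetime localization of the two test bumps so that the weights $|x|^{-r\gamma}$ and $|x|^{\tilde r'\tilde\gamma}$ are each evaluated in their non-trivial regime (near vs.\ far from the origin) simultaneously, and check that no cancellation or competing term in the oscillatory integral defeats the lower bound — this is where the $1/n$ threshold, rather than something larger, actually emerges, and it requires a genuine stationary-phase / dispersive lower bound rather than a crude size estimate. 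The remaining conditions \eqref{n0}–\eqref{n2} are, by contrast, routine scaling and support arguments.
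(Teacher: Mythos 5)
Your treatment of \eqref{n0} (scaling) and of $1/q+1/\tilde q\le 1$ (time-translation invariance) is correct and agrees in spirit with the paper, which for the latter invokes H\"ormander's theorem for the convolution-in-$t$ structure of $TT^{\ast}$ rather than a long-interval tensor test; both work. However, the proposals for \eqref{n2} and especially \eqref{n5} have genuine gaps.

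For \eqref{n2}, your heuristic that the condition $1/q<n(1/2-1/r)+\gamma$ can be read off from the homogeneous admissibility range is off by a factor: the homogeneous constraint in \eqref{admiss} is $1/q<\tfrac{n}{2}(1/2-1/r)+\gamma$, whereas \eqref{n2} carries the full coefficient $n$. That wider coefficient is precisely the extra room enjoyed by the inhomogeneous estimate and is not visible in the homogeneous theorem, so you cannot ``rearrange $\sigma<n/2$'' to get it. A Knapp wave packet (anisotropic slab near the sphere) is also not the right test here. The paper instead takes $\widehat{F(\cdot,s)}(\xi)=\chi_{\{0<s<1\}}\phi(|\xi|)$ with $\phi$ a radial bump at $|\xi|\sim 1$, applies stationary phase with critical point $\xi_0=-x/(2t)$, and finds $|TT^{\ast}F(x,t)|\sim t^{-n/2}$ for $|x|\sim t$ and $t$ large; the $t$-integral $\int_N^\infty t^{-nq/2+nq/r-q\gamma}\,dt$ then forces the exponent condition. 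The decisive point is the \emph{long-time} behavior of $TT^{\ast}F$, not a fine spacetime localization.

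For \eqref{n5}, your proposed test cannot work as written. If $F$ is a bump at spatial scale $1$ (and time $s\approx 0$), then $\|F\|_{L^{\tilde q'}_t L^{\tilde r'}_y(|y|^{\tilde r'\tilde\gamma})}$ is an $O(1)$ constant independent of $\tilde r$ and $\tilde\gamma$, since the weight $|y|^{\tilde\gamma}$ is bounded near the origin; pairing with a similar unit-scale bump $G$ at time $t\sim T$ gives a left side $\sim T^{-n/2}$ with both norms $O(1)$, yielding only the trivial bound as $T\to\infty$ and \emph{no} relation between $(r,\gamma)$ and $(\tilde r,\tilde\gamma)$. The paper's key idea, which you are missing, is to take $F(s,y)=e^{2iR^2 s^2}\chi_{\{0<s<1,\ |y|\le 1/(2R)\}}$: the spatial support shrinks like $1/R$ so that $\|F\|_{L^{\tilde q'}_s L^{\tilde r'}_y(|y|^{\tilde r'\tilde\gamma})}\sim R^{-n/\tilde r'-\tilde\gamma}$ genuinely sees $\tilde r$ and $\tilde\gamma$, while the time chirp $e^{2iR^2s^2}$ moves the stationary point of $q(s;t,z)=2s^2+|z|^2/(t-s)$ into $(0,1)$ at \emph{fixed} $t\in[4,5]$ and $|x|\sim R$, so that $|TT^{\ast}F|\gtrsim R^{-n-1}$ there and the left norm is $\gtrsim R^{-n-1+n/r-\gamma}$. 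Sending $R\to\infty$ then forces $(1/r-\gamma/n)-(1/\tilde r-\tilde\gamma/n)\le 1/n$, and symmetry gives the other inequality. Without the shrinking support and the chirp, the two sides of the estimate do not simultaneously register both pairs of exponents, and the constraint \eqref{n5} does not emerge.

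Finally, for the first two inequalities in \eqref{n1}: your local-integrability heuristic gives the lower bound $\gamma/n<1/r$, but the upper bound $1/r<1/2+\gamma/n$ does not follow from integrability near the origin; the paper derives both as side effects of the same large-time computation used for \eqref{n2} (see the Remark following the proof of \eqref{n2}, comparing the second and third lines of \eqref{note_3}).
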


The first necessity of \eqref{n0} follows from the scaling argument under $(t, x) \rightarrow (\delta^2 t, \delta x)$ for $\delta>0$.
For the last one of \eqref{n1},
we note that $TT^{\ast}$ has a convolution structure with respect to $t$, in fact
\begin{align}\label{kernel}
	TT^{\ast}F %&=\int_0^t e^{i(t-\tau)\Delta} F(\tau) d\tau \nonumber\\
	&=(4\pi)^{-\frac{n}{2}} \int_0^t \int_{\mathbb{R}^n} |t-s|^{-\frac{n}{2}} e^{ \frac{i|x-y|^2}{4|t-s|}} F(y, s) dy ds.
\end{align}
Since it is a time translation invariant operator, the requirement $q>\ti{q}'$ follows from Theorem 1 in \cite{H}.
We note that the operator $TT^{\ast}$ also has convolution structure with respect to $x$ though, it is not space-translation invariant in the weighted space anymore.

\subsection{Proof of \eqref{n2}}
Let $\phi$ be a smooth cut-off function supported in the interval $[1,2]$.
We consider
$$\widehat{F(\cdot, s)}(\xi) =\chi_{\{0<s<1 \}} \phi(|\xi|)$$
and then
 $\|F\|_{L_s^{\tilde{q}'}L_y^{\tilde{r}'}(|y|^{\tilde{r}'\tilde{\gamma}})} \lesssim 1$,
because $F \in \mathcal{S}$ implies $|y|^{\tilde{r}'\tilde{\gamma}}F \in \mathcal{S}$.

We first see that
\begin{align*}
	TT^{\ast}F
	&= (2\pi)^{-\frac{n}{2}}\int_0^1 \int_{\mathbb{R}^n} e^{i x \cdot \xi + i(t-s)|\xi|^2} \phi(|\xi|) d\xi ds \nonumber\\
	&= (2\pi)^{-\frac{n}{2}}\int_{\mathbb{R}^n} e^{i x \cdot \xi + it|\xi|^2} \left(\int_0^1 e^{-is|\xi|^2} ds \right) \phi(|\xi|) d\xi \nonumber\\
	&= (2\pi)^{-\frac{n}{2}}\int_{\mathbb{R}^n} e^{i x \cdot \xi + it|\xi|^2} \left( \frac{e^{-i|\xi|^2}-1}{-i|\xi|^2} \right) \phi(|\xi|) d\xi
\end{align*}
Now, we shall use the stationary phase method (p. 344, \cite{S});
\begin{equation}\label{stationary_1}
	I(M) := \int e^{iMq(\xi)} \varphi(\xi) d\xi \sim M^{-\frac{n}{2}} \qquad \text{for sufficiently large } M
\end{equation}
where a phase function $q$ has a nondegenerate critical point at $\xi_0 \in supp(\varphi)$,
i.e. $\nabla q(\xi_0)=0$ and the matrix $[\frac{\partial^2 q}{\partial \xi_i \partial \xi_j}](\xi_0)$ is invertible.
Indeed, we will apply \eqref{stationary_1} with $\varphi(\xi) = \left( \frac{e^{-i|\xi|^2}-1}{-i|\xi|^2} \right) \phi(|\xi|)$ and $ q(\xi)=\frac{x\cdot \xi}{t}+|\xi|^2$ when $t\in [N, \infty)$ for sufficiently large $N>0$.
We note here that
$$ \nabla q(\xi_0) =0  \quad \text{at} \quad \xi_0=-\frac{x}{2t} \quad \text{and}\quad \lt[\frac{\partial^2 q}{\partial \xi_i \partial \xi_j}\rt](\xi_0)\neq 0.$$
For $|x|\in (2t, 4t)$, one has $\xi_0=-x/2t \in supp(\varphi)$.
Thus by \eqref{stationary_1}, we deduce that $TT^{\ast}F (x, t) \sim t^{-n/2}$ for $t\in [N, \infty)$ and $|x|\in (2t, 4t)$ for large $N$ and so
\bq\label{note_3}
\begin{aligned}
	\left\| TT^{\ast}F  \right\|_{L_t^q L_x^r (|x|^{-r\gamma})}
	&\gtrsim \left( \int^{\infty}_{N} t^{-\frac{nq}{2}}\left( \int_{2t<|x|<4t} |x|^{-r\gamma } dx \right)^{\frac{q}{r}} dt \right)^{\frac{1}{q}} \\
	&\gtrsim \left( \int_N ^{\infty} t^{-\frac{nq}{2}}\left( \int_{2t}^{4t}  \rho^{-r\gamma +n-1} d\rho \right)^{\frac{q}{r}} dt \right)^{\frac{1}{q}}\\
	&\gtrsim \left( \int_N ^{\infty} t^{-\frac{nq}{2}+\frac{nq}{r}-q\gamma}  dt \right)^{\frac{1}{q}}
\end{aligned}
\eq 
for $\gamma/n <1/r$. 
Hence \eqref{inhomo} leads us to get 
$$\left( \int_N ^{\infty} t^{-\frac{nq}{2}+\frac{nq}{r}- q\gamma} dt \right)^{\frac{1}{q}} \lesssim 1$$
for large $N$.
To be valid, 
$-\frac{n}{2}+\frac{n}{r}-\gamma+\frac{1}{q}<0$ is required as a necessary one.
It is the first condition of \eqref{n2}.
The second one of \eqref{n2} is obtained in the similar way with replacing the exponents $q, r, \gamma$ by $\tilde{q}, \tilde{r}, \tilde{\gamma}$.

\begin{rem}
	The proof of \eqref{n2} also give counterexamples for 
	\bq\label{note_4}
	\frac{\gamma}{n}<\frac{1}{r}<\frac{1}{2}+\frac{\gamma}{n} \quad \text{and} \quad \frac{\tilde{\gamma}}{n}<\frac{1}{\tilde{r}}<\frac{1}{2}+\frac{\tilde{\gamma}}{n}
	\eq
	given as the first two conditions in \eqref{n1}. In fact, if $1/r \leq \gamma/n$, then \eqref{inhomo} fails from the second line of \eqref{note_3}. Also, if $1/r\ge 1/2+\gamma/n$, then it is false from the third line of \eqref{note_3}. 
	In the similar way, one easily check that the second one of \eqref{note_4} is required. 
\end{rem}

\subsection{Proof of \eqref{n5}}
Let $R \gg 1$ and let us consider
$$ F(s, y) = e^{2iR^2 s^2} \chi_{\{ 0<s<1,\ |y|\leq \frac{1}{2R} \}}.$$
Then we have $\|F\|_{L_s^{\tilde{q}'}L_y^{\tilde{r}'}(|y|^{\tilde{r}'\tilde{\gamma}})} = (2R)^{-n/\tilde{r}'-\tilde{\gamma}}$.
With $z=(x-y)/2R$, the formula \eqref{kernel} can be rewritten as
\begin{align*}
	TT^{\ast}F
	&= (2\pi)^{-\frac{n}{2}}  \int_{|y|\leq \frac{1}{2R}} \int_0^1 |t-s|^{-\frac{n}{2}} e^{i(2R^2 s^2+ \frac{|x-y|^2}{4|t-s|})} ds dy\\
	&= (2\pi)^{-\frac{n}{2}}  \int_{|y|\leq \frac{1}{2R}} \int_0^1 |t-s|^{-\frac{n}{2}} e^{iR^2(2 s^2+ \frac{z^2}{|t-s|})} ds dy\\
	&= (2\pi)^{-\frac{n}{2}} \int_{|y|\leq \frac{1}{2R}}  I(R; t, z)  dy
\end{align*}
where the oscillatory integral
$$I(R; t, z)=\int_0^1  e^{iR^2 q(s;t,z)} \psi(s;t) ds$$
with an amplitude function $\psi(s;t)=|t-s|^{-n/2}$ and a phase function $q(s;t, z)=2s^2 + |z|^2 /(t-s)$.

We shall use the following stationary phase method.
\begin{lem}[\cite{S2}, chapter VIII]
	Let us consider the oscillatory integral
	$$I(N)=\int_a^b e^{iN q(s)} \psi(s) ds$$
	with phase function $q \in C^5 [a, b]$
	and amplitute $ \psi \in C^2 [a, b]$.
	We assume that $q''(s) \ge 1$ for all $s \in [a, b]$ and that $q'(s_{\ast})=0$ for a point $s_{\ast} \in [a+\delta, b-\delta]$ with $\delta>0$.
	Then
	\bq\label{stationary_2}
	I(N)=\frac{J_{\ast} e^{iNq(s_\ast)}}{\sqrt{N}}+O(\frac{1}{N}) 
	\eq
	where $$J_{\ast} =e^{\frac{i\pi}{4}} \psi(s_{\ast}) \sqrt{\frac{2\pi}{q''(s_\ast)}}.$$
\end{lem}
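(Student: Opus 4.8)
The plan is to argue by the classical method of stationary phase: split $I(N)$ into a piece supported near the nondegenerate critical point $s_\ast$ and a piece supported away from it, treat the latter by integration by parts, and reduce the former to an explicit Fresnel integral via a Morse-type change of variable. Concretely, fix a smooth cutoff $\chi$ equal to $1$ on a neighborhood of $s_\ast$ and supported in $[s_\ast-\delta/2,\,s_\ast+\delta/2]$, and write $I(N)=I_1(N)+I_2(N)$ with $I_1=\int e^{iNq}\chi\psi\,ds$ and $I_2=\int e^{iNq}(1-\chi)\psi\,ds$. Since $q''\ge1$ on $[a,b]$, integrating $q''$ from $s_\ast$ gives $|q'(s)|\ge|s-s_\ast|\gtrsim\delta$ on the support of $1-\chi$, so $q'$ is bounded away from zero there; then $e^{iNq}=(iNq')^{-1}\tfrac{d}{ds}e^{iNq}$, and one integration by parts (with $O(1/N)$ boundary terms at $a,b$) gives $I_2(N)=-\tfrac{1}{iN}\int e^{iNq}\tfrac{d}{ds}\!\big(\tfrac{(1-\chi)\psi}{q'}\big)\,ds+O(1/N)=O(1/N)$, the new integrand being bounded because $(1-\chi)\psi/q'\in C^1$ (here $\psi\in C^2$ and $q\in C^5$ are more than enough), with implied constant controlled by $\delta$, $\|\psi\|_{C^1}$ and $\|q\|_{C^2}$.

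For $I_1(N)$ I would factor out $e^{iNq(s_\ast)}$ and perform the Morse reduction near $s_\ast$. Taylor's formula with integral remainder together with $q'(s_\ast)=0$ gives $q(s)-q(s_\ast)=(s-s_\ast)^2 g(s)$ with $g(s)=\int_0^1(1-\theta)q''\big(s_\ast+\theta(s-s_\ast)\big)\,d\theta\in C^3$ and $g(s_\ast)=\tfrac12 q''(s_\ast)\ge\tfrac12$, so after shrinking the support of $\chi$ the map $u=u(s):=(s-s_\ast)\sqrt{g(s)}$ is a $C^3$ diffeomorphism onto a neighborhood of $0$ with $u'(s_\ast)=\sqrt{q''(s_\ast)/2}$. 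Changing variables yields $I_1(N)=e^{iNq(s_\ast)}\int e^{iNu^2}\Psi(u)\,du$ with $\Psi:=(\chi\psi)\cdot\tfrac{ds}{du}\in C^2$ and compactly supported, and $\Psi(0)=\psi(s_\ast)/u'(s_\ast)=\psi(s_\ast)\sqrt{2/q''(s_\ast)}$. Now pick a cutoff $\eta$ equal to $1$ near $0$ and write $\Psi(u)=\Psi(0)\eta(u)+u\varrho(u)$, where $u\varrho(u)=\Psi(u)-\Psi(0)\eta(u)$ vanishes at $u=0$ and hence $\varrho\in C^1$ is compactly supported. Using the Fresnel identity $\int_{\mathbb{R}}e^{iNu^2}\,du=e^{i\pi/4}\sqrt{\pi/N}$ and estimating $\int e^{iNu^2}(1-\eta)\,du$ and $\int e^{iNu^2}u\varrho\,du=-\tfrac{1}{2iN}\int e^{iNu^2}\varrho'\,du$ by one more integration by parts, each $O(1/N)$, one gets $\int e^{iNu^2}\Psi\,du=\Psi(0)e^{i\pi/4}\sqrt{\pi/N}+O(1/N)=J_\ast/\sqrt N+O(1/N)$. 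Adding this to the bound for $I_2$ gives $I(N)=J_\ast e^{iNq(s_\ast)}/\sqrt N+O(1/N)$.

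The main obstacle I anticipate is the Morse step: checking that $u(s)$ is a genuine $C^3$ change of variable on a definite neighborhood, with the neighborhood size and all constants controlled uniformly in the external parameters on which $q$ and $\psi$ depend; verifying that the $C^2$ regularity of $\psi$ is correctly transported so that the decomposition $\Psi=\Psi(0)\eta+u\varrho$ with $\varrho\in C^1$ is legitimate; and bookkeeping the Jacobian so that exactly the factor $\sqrt{2/q''(s_\ast)}$ appears. Keeping every $O(1/N)$ error quantitative in terms of $\delta$, $\|q\|_{C^5}$ and $\|\psi\|_{C^2}$ is the accompanying technical point, since in the application the lemma is used with phase and amplitude depending on the parameters $R$, $t$ and $z$.
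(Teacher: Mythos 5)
This lemma is cited in the paper directly from Stein's \emph{Harmonic Analysis} (Chapter VIII) without an in-text proof, and your argument is exactly the standard stationary-phase derivation found there: split off a compactly supported piece near $s_\ast$, dispose of the complementary piece by a single integration by parts using the lower bound $|q'(s)|\ge|s-s_\ast|$ forced by $q''\ge1$, then reduce the near piece via the Morse change of variable $u=(s-s_\ast)\sqrt{g(s)}$ to the Fresnel integral $\int e^{iNu^2}\,du=e^{i\pi/4}\sqrt{\pi/N}$, with the Jacobian $ds/du|_{u=0}=\sqrt{2/q''(s_\ast)}$ producing the correct constant $J_\ast$ and the $C^2$ decomposition $\Psi=\Psi(0)\eta+u\varrho$ yielding the $O(1/N)$ remainder. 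Your regularity bookkeeping ($q\in C^5\Rightarrow g\in C^3\Rightarrow ds/du\in C^2\Rightarrow\Psi\in C^2\Rightarrow\varrho\in C^1$) and the treatment of the $(1-\eta)$ tail by one integration by parts with the $1/u^2$ decay are both correct, so the proposal is sound and matches the cited reference's approach.
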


Namely, for $q(s;t, z)=2s^2 + |z|^2 /(t-s)$, we see that
$$\partial_s q =4s-\frac{|z|^2}{(t-s)^2},\qquad \partial_s^2 q =4-\frac{2|z|^2}{(t-s)^3}.$$
For $t\in [4, 5]$ and $1 \leq |z| \leq 2$, one has $\partial_s^M q(s) \in L^{\infty}([0,1])$ for $M>1$.
Also, $q$ has exactly one non-degenerate critical point in $[0, 1]$ such as $$s_{\ast}= \frac{|z|^2}{4(t-s_{\ast})^2} \in [\frac{1}{100}, \frac{1}{9}]$$
and   
$$q'(s_{\ast})=0, \qquad q''(s_{\ast}) =4-\frac{8s_{\ast}}{t-s_{\ast}} >1.$$
Thus by \eqref{stationary_2} we deduce that
\bq\label{note_5}
I(R; t, z) =\frac{J_{\ast}(t, z) e^{iR^2 q(s_{\ast};t, z)} }{R} +O(\frac{1}{R^2})
\eq 
where
$$J_{\ast}(t,z)= e^{\frac{i\pi}{4}} \psi(s_{\ast};t) \sqrt{\frac{2\pi}{\partial_s^2 q(s_{\ast};t,z)}}.$$
Notice here that
\begin{align}\label{note_6}
	|J_{\ast}(t, z)|
	= \left|\, \frac{e^{\frac{i\pi}{4}} } {(t-s_{\ast})^{n/2}}  \sqrt{\frac{2\pi(t-s_{\ast})}{4t-12s_{\ast}}}\, \right|
	= \left|  \sqrt{\frac{2\pi (t-s_{\ast})^{1-n}}{4t -12 s_{\ast}}}\, \right| \sim 1
\end{align}
because of $|t-s_{\ast}|\sim 2$ for $s_{\ast} \in [1/100, 1/9]$ and $t \in [4,5]$.

One has
\bq\label{note_7}
e^{iR^2 q(s_{\ast}; t, z)}= e^{iR^2 q(s_{\ast}; t, x)+i(\frac{|y|^2-2x\cdot y}{4(t-s_{\ast})})}
\eq 
where  
$$ \left|\frac{|y|^2-2 x\cdot y}{4(t-s_{\ast})}\right| 
\leq \frac{|y|^2+2|x||y|}{16} 
\leq \frac{1}{16}\left(\frac{1}{(2R)^2}+\frac{1}{R}(4R+\frac{1}{2R})\right)
\leq \frac{19}{64} $$
for $2R+\frac{1}{2R} <|x|<4R -\frac{1}{2R}$, $|y|<\frac{1}{2R}$ and $R>1$.
Then it follows from \eqref{note_5}, \eqref{note_6} and \eqref{note_7} that
\begin{align*}
	|TT^{\ast}F| 
	&\gtrsim \left| \int_{|y|<\frac{1}{2R}} \frac{J_{\ast}(t, z) e^{iR^2 \varphi(s_{\ast}; t, z)}}{R} dy \right|\\
	&\sim \frac{1}{R}  \left| \int_{|y|<\frac{1}{2R}}  e^{ i(\frac{|y|^2-2x\cdot y}{4(t-s_{\ast})})  } dy \right|\\	
	&\gtrsim \frac{1}{R}  \left| \int_{|y|<\frac{1}{2R}} \cos \lt(\frac{|y|^2-2x\cdot y}{4(t-s_{\ast})}\rt)   dy \right|\\	
	&\gtrsim \frac{\cos(\frac{19}{64})}{R}   \int_{|y|<\frac{1}{2R}}  dy
	\sim \frac{1}{2^n R^{n+1}}.
\end{align*}
Hence we obtain 
\begin{align*}
	\|TT^{\ast}F\|_{L_t^{q} L_x^{r}(|x|^{-r\gamma})}
	&\gtrsim \frac{1}{2^n R^{n+1}} \left( \int_4^5 \left( \int_{2R+\frac{1}{2R} <|x|<4R -\frac{1}{2R}} |x|^{-r\gamma}  dx \right)^{\frac{q}{r}} dt \right)^{\frac{1}{q}}\\
	&\gtrsim  \frac{1}{2^n R^{n+1}} (2R)^{\frac{n}{r}-\gamma}\\ 
	&\gtrsim R^{-n-1+\frac{n}{r}-\gamma} %\frac{\eta^n}{R^{n+1}} \left(2R+\frac{2n}{R} \right)^{\frac{1}{r}} \left( 2R+\frac{\eta}{R} \right)^{\frac{n-1}{r}-\gamma}
\end{align*}
for $n-r\gamma>0$ and $R\gg1$.
As a result, \eqref{inhomo} leads us to get
$$ R^{-n-1+\frac{n}{r}-\gamma} \lesssim R^{-\frac{n}{\tilde{r}'}-\tilde{\gamma}}, \qquad R \gg1$$
which implies $$ (\frac{1}{r} -\frac{\gamma}{n})-(\frac{1}{\tilde{r}}-\frac{\tilde{\gamma}}{n}) \leq \frac{1}{n}.$$
In the similar way, it is clear to deduce the condition 
$(\frac{1}{\tilde{r}} -\frac{\tilde{\gamma}}{n})-(\frac{1}{r}-\frac{\gamma}{n}) \leq \frac{1}{n}$ by replacing $q, r, \gamma$ with $\tilde{q}, \tilde{r}, \tilde{\gamma}$.
Therefore, we conclude that \eqref{n5} is needed for \eqref{inhomo} to hold.

\appendix
\section{Interpolation}\label{appen_B}
Let us assume these cases for a moment.
We check that the conditions stated in Proposition \ref{lem2} come out of the interpolation argument.
By interpolating between $(a)$ and $(b)$, one has \eqref{eq2} for
\begin{gather}\label{i}
	\frac{1}{b}= \frac{1-\theta}{b_1} +\frac{\theta}{6}+\frac{\gamma\theta}{3}, \qquad
	\frac{1}{\tilde{b}}=\frac{1-\theta}{\tilde{b}_1}+\frac{\theta}{2}+\frac{\tilde{\gamma}\theta}{3}-\frac{\varepsilon\theta}{3},\\
	 \frac{1}{a}=\frac{1-\theta}{a_1}+\frac{\theta}{a_2},\qquad  \frac{1}{\tilde{a}}=\frac{1-\theta}{\tilde{a}_1}+\frac{\theta}{\tilde{a}_2}\label{ii}
\end{gather} 
under
\begin{gather}\label{in}
\frac{1}{b_1}-\frac{\gamma}{3}=\frac{1}{\tilde{b}_1}-\frac{\tilde{\gamma}}{3}, \qquad
\frac{\gamma}{3}<\frac{1}{b_1} \leq \frac{1}{2}-\frac{\tilde{\gamma}-\gamma}{6},\\
0\leq \frac{1}{a_1} \leq \frac{1}{\tilde{a}_1'}\leq 1, \qquad \frac{1}{2}\leq \frac{1}{a_2} \leq \frac{1}{\tilde{a}_2'} \leq 1,\qquad 0 \leq \theta \leq 1 \label{inn}
\end{gather}
for $0<\gamma, \tilde{\gamma}<1$.
By \eqref{i}, the condition \eqref{in} becomes
\bq\label{theta}
\frac{\theta(1-\varepsilon)}{3} =\frac{1}{\tilde{b}}-\frac{\tilde{\gamma}}{3}-\frac{1}{b}+\frac{\gamma}{3}
\eq
and 
\[
\frac{\theta}{6}<\frac{1}{b}-\frac{\gamma}{3}\leq \frac{1}{2}-\frac{\theta}{3} -\frac{(\tilde{\gamma}+\gamma)(1-\theta)}{6}.
\] 
Using \eqref{theta} to remove $\theta$, then it implies
\[
\frac{1}{\tilde{b}}-\frac{\tilde{\gamma}}{3} <3\lt( \frac{1}{b}-\frac{\gamma}{3}\rt), \qquad 
\frac{1}{b}-\frac{\gamma}{3} \leq -\frac{2-\gamma-\tilde{\gamma}}{ \gamma+\tilde{\gamma}}\lt( \frac{1}{\tilde{b}}-\frac{\tilde{\gamma}}{3}\rt)+\frac{3-\gamma-\tilde{\gamma}}{3 (\gamma+\tilde{\gamma})}.
\]
Meanwhile, we multiply the first condition of \eqref{inn} by $1-\theta$ and use \eqref{ii}. Then it follows that
\bq\label{c2-inter}
\frac{1}{a}+\frac{1}{\tilde{a}}-1+\theta-\frac{\theta}{\tilde{a}_2}\leq \frac{\theta}{a_2}\leq \frac{1}{a}, \quad \frac{\theta}{\tilde{a}_2}\leq \frac{1}{\tilde{a}}.
\eq
The second one of \eqref{ii} implies
\bq\label{c3-inter}
\frac{\theta}{2}\leq \frac{\theta}{a_2} \leq \theta-\frac{\theta}{\tilde{a}_2}, \quad 0<\frac{\theta}{\tilde{a}_2}.
\eq
We compare each lower bound of $\theta/a_2$ above to its upper bounds, we deduce that there always exist $a_2$ if
\bq\label{c4-inter}
-1+\theta+\frac{1}{\tilde{a}}\leq \frac{\theta}{\tilde{a}_2}\leq \frac{\theta}{2}, \quad \frac{1}{a}\leq \frac{1}{\tilde{a}'}, \quad \frac{\theta}{2}\leq \frac{1}{a}
\eq
in which the last condition means
\[
\frac{3}{2}\lt(\frac{1}{\tilde{b}}-\frac{\tilde{\gamma}}{3}-\frac{1}{b}+\frac{\gamma}{3}\rt) <\frac{1}{a}.
\]
Then comparing the conditions of $\theta/\tilde{a}_2$ in the last ones of \eqref{c2-inter} and \eqref{c3-inter} with the first one of \eqref{c4-inter}, we have
\[
0\leq \frac{1}{\tilde{a}} \leq 1+\frac{3\theta}{2}, \quad 0\leq \theta\leq 1.
\] 
where we note that the upper bound of $1/\tilde{a}$ is trivial because of the last two in \eqref{c4-inter}. 
Here, $0\leq \theta\leq1$ is represented as 
\[
\frac{1}{b}-\frac{\gamma}{3} \leq \frac{1}{\tilde{b}}-\frac{\tilde{\gamma}}{3}
 <\frac{1}{b}-\frac{\gamma}{3}+\frac{1}{3}
\]
by \eqref{theta}.
To conclude, one has
\begin{gather*}
	\frac{1}{3}\lt( \frac{1}{\tilde{b}}-\frac{\tilde{\gamma}}{3}\rt) < \frac{1}{b}-\frac{\gamma}{3} \leq -\frac{2-\gamma-\tilde{\gamma}}{ \gamma+\tilde{\gamma}}\lt( \frac{1}{\tilde{b}}-\frac{\tilde{\gamma}}{3}\rt)+\frac{3-\gamma-\tilde{\gamma}}{3 (\gamma+\tilde{\gamma})},\\
	\frac{1}{b}-\frac{\gamma}{3} \leq \frac{1}{\tilde{b}}-\frac{\tilde{\gamma}}{3}
	<\frac{1}{b}-\frac{\gamma}{3}+\frac{1}{3}, \qquad
	\frac{3}{2}\lt( \frac{1}{\tilde{b}}-\frac{\tilde{\gamma}}{3}-\frac{1}{b}+\frac{\gamma}{3}\rt)<\frac{1}{a} \leq \frac{1}{\tilde{a}'} \leq 1
\end{gather*}
for $0<\gamma, \tilde{\gamma}<1$.

When we interpolate this with case $(c)$, we will impose the restriction of $\tilde{\gamma}<1/2$ for the sake of simplicity. If one uses the interpolation argument with different weights, then it can be improved a little. But it is not affected to obtain the well-posedness result as a note.
Now we use the simple interpolation argument and so \eqref{eq2} holds for
\begin{gather}\label{i2}
\frac{1}{b}=\frac{(1-\theta)(\gamma+\varepsilon)}{3}+\frac{\theta}{b_2}, \quad \frac{1}{\tilde{b}}=\frac{(1-\theta)(1+\tilde{\gamma})}{3}+\frac{\theta}{\tilde{b}_2},\\
\label{ii2}
 \quad \frac{1}{a}=\frac{1-\theta}{2}+\frac{\theta}{a_2}, \quad \frac{1}{\tilde{a}}=\frac{1-\theta}{2}+\frac{\theta}{\tilde{a}_2}
\end{gather}
if $0\leq \theta\leq 1$ and it satisfies
\[
\begin{aligned}
\frac{1}{3} \lt( \frac{1}{\tilde{b}_2}-\frac{\tilde{\gamma}}{3}\rt) <& \frac{1}{b_2}-\frac{\gamma}{3} \leq -\frac{2-\gamma-\tilde{\gamma}}{ \gamma+\tilde{\gamma}}\lt( \frac{1}{\tilde{b}_2}-\frac{\tilde{\gamma}}{3}\rt)+\frac{3-\gamma-\tilde{\gamma}}{3 (\gamma+\tilde{\gamma})},\\
&	\frac{1}{b_2}-\frac{\gamma}{3} \leq \frac{1}{\tilde{b}_2}-\frac{\tilde{\gamma}}{3}
	<\frac{1}{b_2}-\frac{\gamma}{3}+\frac{1}{3},\\
	\frac{3}{2}&\lt( \frac{1}{\tilde{b}_2}-\frac{\tilde{\gamma}}{3}-\frac{1}{b_2}+\frac{\gamma}{3}\rt)<\frac{1}{a_2} \leq \frac{1}{\tilde{a}_2'} \leq 1 
\end{aligned}
\]
for $0<\tilde{\gamma}<1/2$ and $0<\gamma<1$.
Multiplying $\theta$ into these all, it follows that 
\begin{gather}
(\gamma+\tilde{\gamma})\lt( \frac{1}{b}-\frac{\gamma}{3}\rt)+(2-\gamma-\tilde{\gamma})\lt( \frac{1}{\tilde{b}}-\frac{\tilde{\gamma}}{3}-\frac{1}{3}\rt) < \frac{\theta}{3} <3\lt( \frac{1}{b}-\frac{\gamma}{3}\rt)+\frac{1}{3}-\frac{1}{\tilde{b}}+\frac{\tilde{\gamma}}{3} ,\nonumber \\
\frac{1}{b}-\frac{\gamma}{3} -\frac{1}{\tilde{b}}+\frac{\tilde{\gamma}}{3}+\frac{1}{3}< \frac{\theta}{3}, \qquad \frac{1}{\tilde{b}}-\frac{\tilde{\gamma}}{3} <\frac{1}{b}-\frac{\gamma}{3}+\frac{1}{3}, \nonumber\\
 \frac{3}{2} \lt(\frac{1}{\tilde{b}}-\frac{\tilde{\gamma}}{3}-\frac{1}{b}+\frac{\gamma}{3} \rt)<\frac{1}{a}\leq \frac{1}{\tilde{a}'}, \qquad \frac{1}{2}-\frac{1}{\tilde{a}}\leq \frac{\theta}{2}\nonumber
\end{gather}
by using \eqref{i2} and \eqref{ii2}.
Here, one sees that first condition appears as in \eqref{i03}.
Then all conditions of $\theta$ is summarized as follows;
\[
\begin{aligned}
	(\gamma+\tilde{\gamma})\lt( \frac{1}{b}-\frac{\gamma}{3}\rt)+(2-\gamma-\tilde{\gamma})\lt( \frac{1}{\tilde{b}}-\frac{\tilde{\gamma}}{3}-\frac{1}{3}\rt) &< \frac{\theta}{3} <3\lt( \frac{1}{b}-\frac{\gamma}{3}\rt)+\frac{1}{3}-\frac{1}{\tilde{b}}+\frac{\tilde{\gamma}}{3}\\
	\frac{1}{b}-\frac{\gamma}{3} -\frac{1}{\tilde{b}}+\frac{\tilde{\gamma}}{3}+\frac{1}{3}&< \frac{\theta}{3} \\
	\frac{2}{3} \lt(\frac{1}{2}-\frac{1}{\tilde{a}} \rt) &\leq \frac{\theta}{3}\\
		0&\leq \frac{\theta}{3} \leq \frac{1}{3}
\end{aligned}
\]
We compare each lower bound of $\theta/3$ to its upper bounds in turn. Starting from the first lower one, we have \eqref{i02} and $\frac{1}{\tilde{b}}-\frac{\tilde{\gamma}}{3}-\frac{1}{3}<\frac{1}{b}-\frac{\gamma}{3}$.
From the second lower one, we have
\[
\frac{\gamma}{3}<\frac{1}{b}, \qquad \frac{1}{b}-\frac{\gamma}{3}<\frac{1}{\tilde{b}}-\frac{\tilde{\gamma}}{3},
\]
and from the third lower one it follows that $0\leq 1/\tilde{a}$ (i.e., $1/\tilde{a}'\leq 1$) and the first condition of \eqref{i04}.
Finally, the last lower one implies
\[
\frac{1}{\tilde{b}}-\frac{\tilde{\gamma}}{3} <\frac{1}{3}+ 3\lt(\frac{1}{b}-\frac{\gamma}{3} \rt), 
\]
but it is automatically true.
Hence the proof is done.


\begin{thebibliography}{9}


\bibitem{AK} J. An, J. Kim, \textit{A note on the $H^s$-critical inhomogeneous nonlinear Schr\"odinger equation}, Z. Anal. Anwend. 42 (2023), 403–433.

\bibitem{AT} L. Aloui, S. Tayachi,
\textit{Local well-posedness for the inhomogeneous nonlinear Schr\"odinger equation}, Discrete Contin. Dyn. Syst. 41 (2021), 5409–5437.


\bibitem{B} L. Berg\'e, \textit{Soliton stability versus collapse}, Phys. Rev. E, 62 (2000), 3071-3074.

\bibitem{B2} J. Bourgain, \textit{Global wellposedness of defocusing critical nonlinear Schr\"odinger equation in the radial case}, J. Amer. Math. Soc. 12 (1999), no. 1, 145–171.


\bibitem{BH} J. J. Benedetto and H. P. Heinig, \textit{Weighted Fourier inequalities: new proofs and generalizations}, J. Fourier Anal. Appl., 9 (2003), 1-37.
		
\bibitem{BL} J. Bergh and J. L\"ofstr\"om, \textit{Interpolation Spaces, An Introduction}, Springer-Verlag, Berlin-New York, 1976.
	
\bibitem{BPVT} J. Belmonte-Beitia, V. M. P\'erez-Garcia, V. Vekslerchik, and P. J. Torres, \textit{Lie Symmetries and Solitons in Nonlinear Systems with Spatially Inhomogeneous Nonlinearities.} Phys. Rev. Lett. 98, 064102.

	

\bibitem{CCF} L. Campos, S. Correia, L. G. Farah \textit{Some well-posedness and ill-posedness results for the INLS equation}, preprint. 


\bibitem{CL} Y. Cho and K. Lee, \textit{On the focusing energy-critical inhomogeneous NLS: weighted space approach}, Nonlinear Anal. 205 (2021), Paper No. 112261, 21 pp.



\bibitem{CHL} Y. Cho, S. Hong and K. Lee, \textit{On the global well-posedness of focusing energy-critical inhomogeneous NLS}, J. Evol. Equ. 20 (2020), no. 4, 1349–1380.

\bibitem{CK} M. Christ and A. Kiselev, \textit{Maximal functions associated to filtrations}, J. Funct. Anal. 179 (2001), 409-425.


\bibitem{CKSTT} J. Colliander, M. Keel, G. Staffilani, H. Takaoka and T. Tao, \textit{Global well-posedness and scattering for the energy-critical nonlinear Schr\"odinger equation in $\mathbb{R}^3$},  Ann. of Math. 167  (2008), 767-865.


\bibitem{CW} T. Cazenave and F. B. Weissler, \textit{Some remarks on the nonlinear Schr\"odinger equation in the critical case}, Nonlinear Semigroups, Partial differential equations and attractors (Washington, DC, 1987), Lecture Notes in Math. 1394, Springer, Berlin, 1989, 18-29.


\bibitem{F} L. G. Farah, \textit{Global well-posedness and blow-up on the energy space for the inhomogeneous nonlinear Schr\"odinger equation}, J. Evol. Equ. 16 (2016), 193-208.

\bibitem{F2} D. Foschi, \textit{Inhomogeneous Strichartz estimates}, J. Hyperbolic Differ. Equ. 2 (2005), no. 1, 1-24.


\bibitem{GM} C. M. Guzman, J. Murphy, \textit{Scattering for the non-radial energy-critical inhomogeneous NLS}, J. Differential Equations 295 (2021), 187–210.

\bibitem{GX} C. M. Guzman, C. Xu, \textit{Dynamics of the non-radial energy-critical inhomogeneous NLS}, arXiv:2406.07535, preprint



\bibitem{H} L.H\"ormander, \textit{Estimates for translation invariant operators in $L^p$ spaces}, Acta Math. 104 (1960), 93–140.


\bibitem{KLS} J. Kim, Y. Lee and I. Seo, \textit{On well-posedness for the inhomogeneous nonlinear Schr\"{o}dinger equation in the critical case}, J. Differential Equations 280 (2021), 179–202.
		
\bibitem{KLS2} S. Kim, Y. Lee and I. Seo, \textit{Sharp weighted Strichartz estimates and critical inhomogeneous Hartree equations},
Nonlinear Anal.240 (2024), Paper No. 113463, 17 pp.
		
\bibitem{KM} G. Kenig and F. Merle, \textit{Global well-posedness, scattering and blow-up for the energy-critical, focusing, non-linear Schr\"odinger in the radial case}, Invent. Math. 166 (2006), 645-675.
		
\bibitem{KS} Y. Koh and I. Seo, \textit{Inhomogeneous Strichartz estimates for Schr\"odinger's equation}, J. Math. Anal. Appl. 442 (2016), no. 2, 715–725.
			
		
	
\bibitem{LS} Y. Lee and I. Seo, \textit{The Cauchy problem for the energy-critical inhomogeneous nonlinear Schr\"{o}dinger equation}, Arch. Math. (Basel) 117 (2021), no. 4, 441–453.

\bibitem{LYZ} X.Liu, K. Yang, T. Zhang, \textit{Dynamics of threshold solutions for the energy-critical inhomogeneous NLS}, arXiv:2409.00073, preprint.

\bibitem{LZ} X. Liu, T. Zhang, \textit{Global existence, scattering and blow-up for the focusing energy-critical inhomogeneous NLS}, preprint.

\bibitem{OR} T. Ozawa and K. M. Rogers, \textit{Sharp Morawetz estimates}, J. Anal. Math. 121 (2013), 163-175.

\bibitem{P} D. Park, \textit{Global well-posedness and scattering of the defocusing energy-critical inhomogeneous nonlinear Schr\"odinger equation with radial data}, J. Math. Anal. Appl. 536 (2024), Paper No. 128202, 28 pp.

\bibitem{Pi} H. R. Pitt, \textit{Theorems on Fourier series and power series}, Duke Math. J., 3 (1937), 747-755.

\bibitem{S} E. M Stein, \textit{Singular integrals and differentiability properties of functions}, Princeton Mathematical Series, No. 30 Princeton University Press, Princeton, N.J. 1970 xiv+290 pp.

\bibitem{S2} E. M Stein, \textit{Harmonic analysis: real-variable methods, orthogonality, and oscillatory integrals}, Princeton Mathematical Series, No. 43 Princeton University Press, Princeton, N.J. 1993 xiv+695 pp.

\bibitem{S3} E. M. Stein, \textit{Interpolation of linear operators}, Trans. Amer. Math. Soc., 83 (1956), 482-492.


\bibitem{SW} E. M. Stein and G. Weiss, \textit{Fractional integrals on $n$-dimensional Euclidean space}, J. Math. Mech. 7 (1958), 503-514.

\bibitem{T} T. Tao, \textit{Global well-posedness and scattering for the higher-dimensional energy-critical nonlinear Schr\"odinger equation for radial data}, New York J. Math. 11 (2005), 57–80.



\bibitem{TV} T. Tao, M. Visan,\textit{Stability of energy-critical nonlinear Schr\"odinger equations in high dimensions}, Electron. J. Differential Equations(2005), No. 118, 28 pp.


\bibitem{V2} M. C. Vilela, \textit{Inhomogeneous Strichartz estimates for the Schr\"odinger equation}, Trans. Amer. Math. Soc. 359 (2007), no. 5, 2123–2136.




\end{thebibliography}
\end{document}